\newtheorem{example}{Example}
\newtheorem{remark}{Remark}
\newtheorem{theorem}{Theorem}
\newtheorem{proof}{Proof}
\documentclass[preprint,12pt]{elsarticle}
\usepackage{algorithm}
\usepackage{algpseudocode}
\usepackage{subfigure}
\usepackage{adjustbox}

\usepackage{amssymb}
\usepackage{amsmath}

\journal{Nuclear Physics B}

\begin{document}

\begin{frontmatter}



\title{Memory-Augmented Phase-Field Modeling: Onsager Formulation, Internal-Variable Lifting, and Structure-Preserving Computation}

\author[1]{Peng Ding}
\ead{pding@eitech.edu.cn}

\author[2]{Qingxia Liu}
\ead{liuqx@xmu.edu.cn}

\author[1]{Zhiping Mao\corref{cor1}}
\ead{zmao@eitech.edu.cn}

\author[3]{Zhen Zhang}
\ead{zhangz@sustech.edu.cn}

\cortext[cor1]{Corresponding author.}
\affiliation[1]{organization={School of Mathematical Sciences, Eastern Institute of Technology, Ningbo},
            postcode={315200},
            state={Zhejiang},
            country={China}}
\affiliation[2]{organization={School of Mathematical Sciences, Xiamen University},
            city={Xiamen},
            postcode={361005},
            state={Fujian},
            country={China}}

\affiliation[3]{organization={Department of Mathematics, Southern University of Science and Technology (SUSTech)},
            city={Shenzhen},
            postcode={518055},
            state={Guangdong},
            country={China}}


\begin{abstract}
Classical phase-field models are typically formulated as Markovian gradient flows, where the instantaneous state completely determines the future evolution. However, many complex materials and physical processes exhibit relaxation behaviors with memory effects that cannot be accurately described by such local-in-time models. Existing approaches often introduce fractional-order operators as phenomenological extensions, but they may lack a rigorous thermodynamic interpretation and can obscure the intrinsic energy-dissipation structure of phase-field dynamics. In this work, we develop a general memory-augmented phase-field (MAPF) framework based on Onsager's variational principle, in which memory effects are incorporated through a general relaxation kernel while preserving the thermodynamic consistency of the underlying system. Rather than treating fractional models as fundamental formulations, we show that they can be interpreted as reduced descriptions associated with specific power-law memory kernels. The proposed MAPF equations provide a unified modeling framework for non-Markovian phase-field dynamics. To facilitate analysis and computation, we further introduce an equivalent internal-variable representation that transforms the original nonlocal-in-time dynamics into an extended Markovian gradient system. This formulation preserves key structural properties, including energy dissipation and mass conservation, and enables the development of efficient structure-preserving numerical schemes. In particular, we construct an energy-stable scalar auxiliary variable approach combined with a fast rank-one implementation that exploits the relaxation-spectrum-induced coupling structure, resulting in a computationally efficient algorithm with reduced storage requirements. The proposed framework establishes a systematic connection between memory-dependent phase-field modeling, internal-variable thermodynamics, and structure-preserving numerical computation. Numerical experiments demonstrate the accuracy, stability, and efficiency of the proposed methods for phase-field dynamics with long-time memory effects.
\end{abstract}



\begin{keyword}
Phase-field models with memory kernels\sep internal variables\sep Onsager's variational principle\sep  time-fractional phase-field equations\sep structure-preserving schemes \sep efficient implementation



\end{keyword}

\end{frontmatter}



\section{Introduction}
Phase-field models are a standard energetic framework for describing interfacial dynamics, phase separation, and pattern formation. In their classical form, the Allen--Cahn and Cahn--Hilliard equations are local-in-time gradient flows driven by a free energy $E$ and a thermodynamic force $\mu=\delta E/\delta\phi$:
\begin{equation*}\label{eq:AC-CH}
    \frac{\partial \phi}{\partial t} = -\mathcal{G}\mu, \qquad \mu=\frac{\delta E}{\delta\phi},
\end{equation*}
where $\mathcal{G}=I$ gives Allen--Cahn dynamics and $\mathcal{G}=-\Delta$ gives Cahn--Hilliard dynamics. This formulation is thermodynamically natural: under suitable boundary conditions it yields the classical energy-dissipation law
\begin{equation*}
    \frac{d}{dt}E(\phi(t)) = -\|\mathcal{G}^{1/2}\mu(\mathbf{x},t)\|^2 \le 0 .
\end{equation*}
It is also Markovian: the present rate $\frac{\partial \phi(\mathbf{x},t)}{\partial t}$ depends only on the instantaneous driving force. Such a closure is appropriate when unresolved microscopic or mesoscopic variables relax much faster than the order parameter, and it underlies a large body of thermodynamically consistent phase-field theory and computation~\cite{AllenCahn1979,CahnHilliard1958,Gurtin1996Microforce,QiangDu,ShenYang2010,SHEN2019474}.

For many materials of current interest, however, the instantaneous closure is not the most faithful modeling assumption. Phase separation in polymer solutions, gels, crowded cytoplasm, protein condensates, and other soft or biological media may be coupled to polymer-chain relaxation, transient crosslinks, cytoskeletal elasticity, binding--unbinding kinetics, or glassy structural rearrangements. These mechanisms may relax on time scales comparable to the phase-separation process. The resulting dynamics is then not determined solely by the present chemical potential; it retains a memory of past deformation, composition change, or internal rearrangement. This viewpoint is consistent with viscoelastic phase separation and related biological applications~\cite{Tanaka2000VPS,Tanaka2022BioCells,CurkLuijten2023PNAS}, and with coarse-grained modeling principles showing that memory kernels become important when the resolved and unresolved variables are not clearly separated in time~\cite{ColemanGurtin1967InternalVariables,LiBianLiKarniadakis2015MZ}.

A modeling-oriented phase-field equation with memory may be written, at the macroscopic level, as
\begin{equation}\label{eq:intro_memory_kernel_model}
    \int_0^t K(t-s)\,\frac{\partial \phi}{\partial s}(\mathbf{x},s)\,ds = -\mathcal{G}\mu(\mathbf{x},t), \qquad \mu(\mathbf{x},t)=\frac{\delta E}{\delta\phi}(\mathbf{x},t),
\end{equation}
where $K(t)$ is a relaxation kernel. The kernel is not merely a mathematical decoration: it represents the effect of 
internal modes that have been eliminated from the macroscopic description. 
Different choices of $K(t)$ encode different physical assumptions. 
A single exponential kernel corresponds to one Maxwell relaxation mode; 
a finite sum of exponentials corresponds to a generalized Maxwell or Prony-series 
representation~\cite{Ferry1980,Tschoegl1989};
a broad relaxation spectrum produces non-exponential memory; 
and a power-law spectrum yields the Caputo time-fractional derivative 
\cite{Mainardi2010Fractional,Bagley1983Fractional}. Earlier phase-field equations with memory already showed that relaxation kernels can generate hyperbolic or hereditary generalizations of classical phase-field models~\cite{RotsteinBrandonNovickCohenNepomnyashchy2001Memory}. The central issue is therefore not whether a model is integer-order or fractional, but how the memory kernel is generated and how the stored memory energy dissipates.

This distinction is particularly important for time-fractional phase-field equations. A common construction is to start from the classical Allen--Cahn or Cahn--Hilliard gradient flow and replace the first-order time derivative by a Caputo fractional derivative. This produces a useful class of fractional generalizations for anomalous transport, slow coarsening, and material memory~\cite{LiuChengWangZhao2018TFPF,HouXu2021,DuYangZhou2020JSC,AlMaskariKaraa2021IMAJNA}. From a strict modeling perspective, however, such a replacement does not by itself identify the internal variables that were eliminated, the energy temporarily stored in memory, or the friction mechanism responsible for dissipation. Consequently, the classical phase-field energy law is no longer automatic. Indeed, the energy-dissipation structure of time-fractional phase-field equations has required separate analysis, including integral-type dissipation laws, nonlocal modified energies, and decreasing upper bounds for the physical energy~\cite{Tang_PF,QuanTangYang2020,LiQuanXu2022Energy,QuanTangYang2023}. These results are mathematically important, and they also indicate a modeling gap: the thermodynamic structure should ideally be built into the model before discretization.

Motivated by these limitations, we aim to develop in this work a thermodynamically consistent framework for phase-field modeling with memory effects. In particular, the viewpoint of this paper is to reverse the usual order of model construction. Rather than taking the fractional derivative as the primary modeling ingredient, we begin with an enlarged system characterized by a continuum of relaxation modes. The resulting framework explicitly reveals the internal memory variables, the associated relaxation mechanisms, and the stored memory energy.

Specifically, we consider memory effects from a more fundamental viewpoint based on relaxation mechanisms. By incorporating a general relaxation spectrum into the Onsager variational framework, we establish a new class of memory-augmented phase-field (MAPF) equations. The proposed framework provides a unified description of non-Markovian phase-field dynamics while retaining the intrinsic gradient-flow structure of classical models.To this end, 
the order parameter is coupled to a continuum of internal Maxwell-type modes $\Psi(\xi)$ indexed by a relaxation rate $\xi>0$.
The relaxation spectrum $\kappa(\xi)$ determines the memory kernel
\begin{equation*}
    K(t)=\int_0^\infty \kappa(\xi)e^{-\xi t}\,d\xi .
\end{equation*}
For any positive spectrum, Onsager's variational principle gives an augmented free energy and a nonnegative dissipation potential. Eliminating the internal variables produces the memory equation \eqref{eq:intro_memory_kernel_model}. If $\kappa(\xi)\propto \xi^{\alpha-1}$, then $K(t)=t^{-\alpha}/\Gamma(1-\alpha)$ and the reduced model is precisely a Caputo time-fractional phase-field equation. Thus the fractional model is not the starting point; it is one particular reduced model generated by a power-law relaxation spectrum. Other spectra or nonlinear internal energies lead, within the same framework, to other phase-field models with finite-memory, tempered, distributed-order, Prony-series, stretched-exponential, or nonlinear memory effects \cite{sabzikar2015tempered,song2023nonmaxwellian,mainardi2008time,luo2018development}. This broader interpretation is consistent with internal-variable thermodynamics~\cite{ColemanGurtin1967InternalVariables,Ottinger2005BET}, power-law rheology in gels and living cells~\cite{WinterChambon1986,Fabry2003,Bonfanti2020}, and diffusive or infinite-state representations of hereditary operators~\cite{Montseny1998,diethelm2022new}.

A key feature of the proposed formulation is that the memory contribution is represented through a continuous spectrum of relaxation processes. This representation naturally connects the macroscopic memory kernel with internal variables associated with different relaxation time scales.
The internal-variable formulation provides a convenient framework for numerical approximation of the proposed MAPF equations. By replacing the history-dependent memory term with a family of auxiliary relaxation variables, the resulting system can be discretized using standard techniques for coupled evolution equations. Nevertheless, the continuous relaxation spectrum introduced by the memory kernel may still lead to additional computational costs, particularly for long-time simulations. This motivates the development of efficient numerical strategies that can accurately capture the memory effects while maintaining computational feasibility.
To this end,  we derive an equivalent internal-variable lifting for the resulting MAPF equations, thereby transforming the original nonlocal-in-time dynamics into an extended Markovian gradient system. This reformulation is not only beneficial for theoretical analysis but also provides a practical foundation for developing efficient numerical methods.

The main contributions of this work are summarized below:
\begin{itemize}

\item \textbf{A memory-based phase-field modeling framework.} We introduce a new class of memory-augmented phase-field (MAPF) models derived from Onsager's variational principle. The framework incorporates general relaxation kernels into the thermodynamic driving force and provides a unified description of phase-field systems with memory effects. In particular, fractional phase-field models are interpreted as special cases associated with specific power-law memory kernels, rather than independent fundamental models.

\item \textbf{An internal-variable representation of non-Markovian phase-field dynamics.} We establish an equivalent internal-variable formulation of the MAPF equations by introducing a family of relaxation variables parameterized by the characteristic time scale. This lifting transforms the original history-dependent system into an extended Markovian gradient-flow system while preserving the underlying thermodynamic structure. The formulation provides a systematic bridge between memory-dependent models and classical phase-field dynamics.

\item \textbf{Structural analysis of the proposed MAPF system.} We rigorously establish the fundamental properties of the proposed formulation, including energy dissipation for Allen--Cahn type models and mass conservation together with energy decay for Cahn--Hilliard type models. These results demonstrate that the memory augmentation is compatible with the fundamental variational structure of phase-field equations.

\item \textbf{Structure-preserving discretization and efficient implementation of MAPF dynamics.} Based on the internal-variable representation, we develop energy-stable exponential scalar auxiliary variable (ESAV) schemes for the MAPF equations. To overcome the computational challenges caused by the continuous relaxation spectrum, fast low-rank approximations are introduced to reduce computational complexity and storage requirements while maintaining the stability properties of the original formulation.

\item \textbf{Numerical validation of long-time memory dynamics.} 
Extensive numerical experiments are performed to verify the accuracy, stability, and efficiency of the proposed methods. The results demonstrate that the proposed MAPF framework can effectively capture long-time relaxation behaviors while preserving the essential physical structures of phase-field dynamics.
\end{itemize}

The remainder of this paper is organized as follows. In Section~\ref{sec:derivation}, we establish the MAPF framework based on Onsager's variational principle and recover the Caputo phase-field model as a power-law special case. Section~\ref{sec:2} develops the equivalent internal-variable formulation of the MAPF and analyzes its continuous energy dissipation and mass-conservation properties. Section~\ref{sec:3} presents the structure-preserving ESAV schemes, the fast rank-one implementation, and the corresponding discrete energy stability and mass-conservation properties. Numerical experiments are presented in Section~\ref{sec:6}, followed by conclusions in Section~\ref{sec:7}.

\section{Variational derivation of memory-augmented phase-field models}\label{sec:derivation}
We now derive a class of Allen--Cahn phase-field models with memory from Onsager's variational principle (OVP). The purpose of this section is to formulate the modeling structure before choosing any particular kernel. The key idea is to retain the slow internal relaxation modes that are eliminated in a Markovian closure. Coupling the order parameter to a \emph{continuum of internal elastic modes} produces a hereditary phase-field equation whose memory kernel is determined by the relaxation spectrum. A power-law spectrum will later recover the Caputo time-fractional model as a special reduced case, but the derivation itself applies to any positive relaxation spectrum. We first present the physical picture (\S\ref{subsec:physical}), then carry out the variational derivation (\S\ref{subsec:OVP}), and finally establish the energy dissipation law (\S\ref{subsec:EDL}).

\subsection{Physical setup: the generalized Maxwell model}\label{subsec:physical}

Consider phase separation occurring in a viscoelastic medium --- for instance, a polymer gel, a biomolecular condensate, or a cytoskeletal network. These systems can be described by a binary mixture (polymer and solvent) in the
poor-solvent regime, where the equilibrium state is two coexisting phases of different
polymer volume fractions. The polymer chains are connected by transient cross-links --- physical entanglements, multivalent protein--protein binding, or hydrogen-bond
networks --- forming an elastic network that permeates the solution. The local polymer volume fraction is given by an order parameter $\phi(\mathbf{x},t)$.
In a purely
viscous solvent (no network), the dynamics would be a standard Cahn--Hilliard or
Allen--Cahn equation.  However, the polymer network introduces two new effects:
\begin{enumerate}
  \item \textbf{Elastic energy storage:} When phase separation drives polymer to
    accumulate locally, the network is deformed (stretched in the polymer-rich phase,
    compressed in the polymer-poor phase), with the local elastic deformation represented by an internal variable $\Psi(\mathbf{x}, t, \xi)$ (mesh-size change relative to the reference state) of network modes with characteristic relaxation rate~$\xi\in(0,\infty)$.  A ``mode with rate~$\xi$'' corresponds to
cross-links of lifetime~$1/\xi$: short-lived bonds (large~$\xi$) relax quickly and
offer transient resistance, while long-lived bonds (small~$\xi$) sustain elastic stress
over extended periods.  
This network deformation stores elastic energy that
    partially opposes the thermodynamic driving force. In this work, we assume the elastic energy is of Hookean type, i.e., $\frac{1}{2}\kappa(\xi)|\Psi|^2$, with $\kappa(\xi)$ being the modulus. 
  \item \textbf{Permeation friction:} For the polymer volume fraction $\phi$ to
    change at a given point, polymer chains must move \emph{relative to} the elastic
    network.  This relative motion --- polymer ``flowing through'' its own network --- is
    resisted by friction, analogous to solvent permeating through a gel. If the network deforms in perfect synchrony with the concentration change
    ($\dot\Psi = \dot\phi$), the polymer and the network move together as a unit ---
    there is no relative motion and hence no friction. If $\dot\phi \neq \dot\Psi$, polymer must flow through the network (or the
    network must be dragged through the polymer), producing a frictional force
    proportional to the velocity mismatch. The stronger
    the mismatch between the rate of concentration change ($\dot\phi$) and the rate
    of network deformation ($\dot\Psi$), the greater the frictional dissipation -- known as \emph{permeation friction}. This dissipation is assumed to be in the quadratic form $\frac{\kappa(\xi)}{\xi}|\dot\phi - \dot\Psi|^2$.
\end{enumerate}

This is precisely the scalar analogue of the two-fluid friction
$\zeta |\mathbf{v}_{\mathrm{polymer}} - \mathbf{v}_{\mathrm{solvent}}|^2$ in the
Doi--Onuki and Tanaka models of viscoelastic
phase separation \cite{DoiOnuki1992, Tanaka1997, Qian2026Incorporating}.  The mode viscosity $\kappa(\xi)/\xi$ plays the role of the friction
coefficient~$\zeta$ restricted to the sub-network with relaxation rate~$\xi$.
Then the present model can be understood as a simplified (scalar, linearized,
continuous-spectrum) version of the full two-fluid model for viscoelastic phase
separation. This relation is summarized in Table \ref{table:relation}.
\begin{table}[!htbp]
\label{table:relation}
\caption{Relation to the Doi--Onuki / Tanaka two-fluid model.}
\begin{center}
\renewcommand{\arraystretch}{1.3}
\begin{tabular}{lll}
\hline
\textbf{Simplification} & \textbf{Two-fluid model} & \textbf{Present model} \\
\hline
Kinematics & Vector velocities $\mathbf{v}_p$, $\mathbf{v}_s$
  & Scalar rates $\dot\phi$, $\dot\Psi$ \\
Elastic stress & Nonlinear tensor $\boldsymbol{\sigma}_{\mathrm{net}}$
  & Linear scalar $\kappa(\xi)\Psi$ \\
Relaxation spectrum & Single time $\lambda$ (or discrete)
  & Continuous $\xi \in (0,\infty)$ \\
Friction & $\zeta(\varphi)|\mathbf{v}_p - \mathbf{v}_s|^2$
  & $\frac{\kappa(\xi)}{\xi}|\dot\phi - \dot\Psi|^2$ \\
\hline
\end{tabular}
\end{center}
\end{table}

\paragraph{The mechanical analogue}
The physical picture described above is formalized by representing each network mode
as a Maxwell element (a spring in series with a dashpot), with the order
parameter~$\phi$ driving the system in parallel (see Fig.~\ref{fig:maxwell}).
Specifically:
\begin{itemize}
    \item \textbf{Spring:} modulus $\kappa(\xi)$, stores energy $\frac{1}{2}\kappa(\xi)|\Psi|^2$;
    \item \textbf{Dashpot:} viscosity $\eta(\xi) = \kappa(\xi)/\xi$, dissipates energy at rate $\frac{\kappa(\xi)}{\xi}|\frac{\partial \phi}{\partial t} - \frac{\partial \Psi}{\partial t}|^2$;
    \item \textbf{Relaxation rate:} $\xi = \kappa/\eta$, so the relaxation time is $1/\xi$.
\end{itemize}
The full system is a \emph{generalized Maxwell model}: the order parameter $\phi$ is coupled in parallel to a continuum of such elements (see Figure~\ref{fig:maxwell}). As we shall see, with initial condition 
\begin{equation}\label{eq:Psi0}
    \Psi(\mathbf{x}, 0, \xi) = 0 \quad \text{(undeformed reference state),}
\end{equation}
this generalized Maxwell model yields
an explicit solution
\begin{equation}\label{eq:Psi_solution_preview}
  \Psi(\mathbf{x}, t, \xi) = \int_0^t e^{-\xi(t-s)} \frac{\partial\phi}{\partial s}(\mathbf{x},s)\,ds,
\end{equation}
i.e., an exponentially-weighted memory of past concentration changes.  Physically,
$\Psi(\mathbf{x},t, \xi)$ is the cumulative elastic strain in bonds of lifetime~$1/\xi$ that has
\emph{not yet relaxed}: recent changes in~$\phi$ contribute fully, while distant
changes have been progressively dissipated through the dashpot.

\begin{figure}[ht]
\centering
\includegraphics[width=0.85\linewidth]{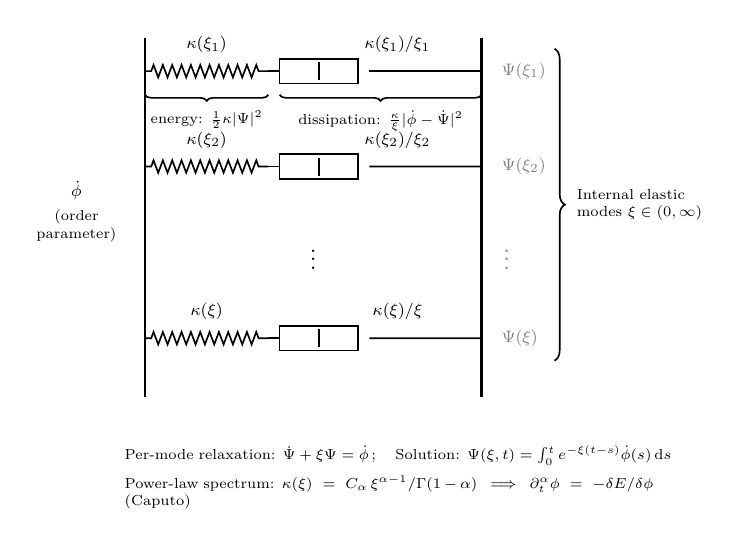}
\caption{The generalized Maxwell model underlying the phase-field equation with memory. The macroscopic field $\phi$ drives a continuum of spring--dashpot elements. General relaxation spectra yield memory kernels, and the power-law spectrum $\kappa(\xi)\propto \xi^{\alpha-1}$ yields the Caputo time-fractional special case.}\label{fig:maxwell}
\end{figure}

\paragraph{Power-law spectra and fractional reductions}
A power-law spectrum is an important, but not exclusive, constitutive choice. If
\begin{equation}\label{eq:power_law_spectrum}
    \kappa(\xi) = C_\alpha\, \xi^{\alpha-1}, \quad 0<\alpha<1,
\end{equation}
then the relaxation modulus becomes $K(t) = \int_0^\infty \kappa(\xi)e^{-\xi t}\,d\xi \propto t^{-\alpha}$. This power-law rheology is the continuum-relaxation interpretation of the Scott--Blair fractional element and has been observed in:
\begin{itemize}
    \item critical gels at the gel point, where $K(t)\propto t^{-\alpha}$ with exponents close to $1/2$ in classical experiments by Winter and Chambon~\cite{WinterChambon1986};
    \item living cells and cytoskeletal materials, where power-law rheology with broad exponent ranges is commonly reported~\cite{Fabry2003,Bonfanti2020};
    \item biomolecular condensates and other multivalent soft materials, where broad relaxation spectra can arise from many binding and rearrangement pathways~\cite{Tanaka2022BioCells,CurkLuijten2023PNAS}.
\end{itemize}
In the context of biomolecular condensates and cytoskeletal networks, the power-law
spectrum reflects the \emph{hierarchical architecture} of the transient network: bonds
of all lifetimes --- from fast-exchanging weak interactions to slowly-dissociating
multivalent hubs --- contribute to a scale-free relaxation landscape.  The fractional
exponent~$\alpha$ thus encodes the breadth of the cross-link lifetime distribution: as
$\alpha \to 1$, the spectrum sharpens to a single relaxation time (pure Maxwell
element, Markovian limit); as $\alpha \to 0$, the spectrum becomes maximally broad
(nearly elastic, strong memory). Theoretically, the power-law spectrum  can be derived rigorously for Rouse dynamics on percolation clusters at the gel point, where the fractal geometry of the network eliminates any characteristic length or time scale \cite{deGennes1976, Muthukumar1989}.
In the present paper, the power-law spectrum is used to connect the general memory model with the existing time-fractional phase-field literature.

    \subsection{Onsager's variational principle formulation}\label{subsec:OVP}
    
    We now formalize the physical picture above within the framework of Onsager's variational principle.
    We assume that \(\kappa(\xi)\ge 0\) and that the corresponding memory kernel \(K(t)=\int_0^\infty \kappa(\xi)e^{-\xi t}\,d\xi\) is finite for \(t>0\), with the internal variables belonging to the natural weighted space such that the energy and dissipation integrals below are finite.

\subsubsection{Free energy}

In the extended state space $(\phi, \Psi)$, the total free energy consists of the classical Ginzburg--Landau energy for $\phi$ plus the elastic energy stored in the internal modes:
\begin{equation}\label{eq:free_energy}
    \mathcal{F}(\phi, \Psi) = \underbrace{\int_\Omega H(\phi, \nabla\phi)\,d\mathbf{x}}_{E(\phi):\text{ phase-field energy}}
    + \underbrace{\frac{1}{2}\int_\Omega\int_0^\infty \kappa(\xi)\,\Psi^2(\mathbf{x},t,\xi)\,d\xi\,d\mathbf{x}}_{\text{internal elastic energy}}.
\end{equation}
The first term drives phase separation (e.g., $H(\phi, \nabla\phi) = \frac{\varepsilon^2}{2}|\nabla\phi|^2 + F(\phi)$ with double-well potential $F$). The second term represents energy temporarily stored in the partially relaxed internal modes.

\subsubsection{Dissipation potential}

The dissipation arises from the friction due to the \emph{rate mismatch} between the macroscopic evolution $\dot\phi$ and the internal adaptation $\dot\Psi(\xi)$:
\begin{equation}\label{eq:dissipation}
    \Phi\!\left(\frac{\partial\phi}{\partial t}, \frac{\partial\Psi}{\partial t}\right)
    = \frac{1}{2}\int_\Omega\int_0^\infty \frac{\kappa(\xi)}{\xi}
    \left(\frac{\partial\phi}{\partial t} - \frac{\partial\Psi}{\partial t}\right)^{\!2} d\xi\,d\mathbf{x}.
\end{equation}
Physically, $\kappa(\xi)/\xi$ is the mode viscosity (dashpot coefficient). The dissipation is entirely due to internal friction --- there is no ``solvent'' contribution in the pure fractional model.

\subsubsection{The Rayleighian and Euler--Lagrange equations}

Following Onsager's variational principle, we define the Rayleighian
\begin{equation*}
    \mathcal{R}\!\left(\frac{\partial\phi}{\partial t}, \frac{\partial\Psi}{\partial t}\right)
    = \frac{d\mathcal{F}}{dt}\bigg|_{\dot\phi, \dot\Psi}
    + \Phi\!\left(\frac{\partial\phi}{\partial t}, \frac{\partial\Psi}{\partial t}\right),
\end{equation*}
where the rate of change of free energy is
\begin{equation}\label{eq:dFdt}
    \frac{d\mathcal{F}}{dt} = \int_\Omega\left[
    \frac{\delta E}{\delta\phi}\frac{\partial\phi}{\partial t}
    + \int_0^\infty \kappa(\xi)\,\Psi\,\frac{\partial\Psi}{\partial t}\,d\xi
    \right]d\mathbf{x}.
\end{equation}

\paragraph{Minimization $\mathcal{R}$ with respect to $\partial\Psi/\partial t$ (internal force balance)}
Setting $\delta\mathcal{R}/\delta\dot\Psi = 0$ yields, for each mode $\xi$:
\begin{equation}\label{eq:internal_balance}
    \underbrace{\kappa(\xi)\,\Psi(\mathbf{x},t,\xi)}_{\text{elastic restoring force}}
    = \underbrace{\frac{\kappa(\xi)}{\xi}\left(\frac{\partial\phi (\mathbf{x},t)}{\partial t} - \frac{\partial\Psi (\mathbf{x},t,\xi)}{\partial t}\right)}_{\text{friction force}}.
\end{equation}
This is precisely the force balance of a Maxwell element: the spring force equals the dashpot force. Rearranging gives the \emph{relaxation equation}:
\begin{equation}\label{eq:relaxation}
    \frac{\partial\Psi}{\partial t}(\mathbf{x},t,\xi) + \xi\,\Psi(\mathbf{x},t,\xi) = \frac{\partial\phi}{\partial t}(\mathbf{x},t),
    \qquad \forall\;\kappa(\xi) > 0.
\end{equation}
With initial condition $\Psi(\mathbf{x},0,\xi)=0$ (cf. \eqref{eq:Psi0}), the solution is given by \eqref{eq:Psi_solution_preview}.

\paragraph{Minimization $\mathcal{R}$ with respect to $\partial\phi/\partial t$ (macroscopic equation)}
Setting $\delta\mathcal{R}/\delta\dot\phi = 0$ gives:
\begin{equation*}\label{eq:macro_balance}
    \frac{\delta E}{\delta\phi} + \int_0^\infty \frac{\kappa(\xi)}{\xi}
    \left(\frac{\partial\phi (\mathbf{x},t)}{\partial t} - \frac{\partial\Psi (\mathbf{x},t,\xi)}{\partial t}\right)d\xi = 0.
\end{equation*}
Using the internal force balance \eqref{eq:internal_balance},
the above equation becomes 
\begin{equation}\label{eq:macro_balance1}
    \frac{\delta E}{\delta\phi} + \int_0^\infty \kappa(\xi)\Psi(\mathbf{x},t,\xi)\,d\xi = 0,
\end{equation}
which contains only elastic variables. The friction has been absorbed into the elastic history through the internal force balance. 

\begin{remark}\label{rmk:force-balance}
Equations \eqref{eq:relaxation}--\eqref{eq:macro_balance1} reveal a transparent mechanical
picture. For the order
parameter $\phi$, the chemical potential $\mu=\frac{\delta E}{\delta\phi}$ as its driving force is balanced by the total friction
  from all modes (as shown in \eqref{eq:macro_balance1}). For the
internal elastic network $\Psi(\cdot,\cdot, \xi)$ at each mode, its elastic restoring force is also balanced by friction, giving
  the Maxwell relaxation equation \eqref{eq:relaxation}.
  The sole coupling between them is the permeation friction
$\frac{\kappa(\xi)}{\xi}(\dot\phi - \dot\Psi)$, which acts as an
action--reaction pair.
For fast modes ($\xi$
large), $\Psi \approx 0$ but $\xi\Psi$ remains finite: their friction is
instantaneous and memoryless. For slow modes ($\xi$
small), $\Psi$ accumulates and persists, producing a long-lived friction that
retards $\phi$ beyond its Markovian dynamics. For broad spectra that place significant weight on small $\xi$ (e.g., the power-law $\kappa(\xi) \propto \xi^{\alpha-1}$ with small $\alpha$), the persistent friction dominates and further retards the evolution.

\end{remark}
\subsection{Energy dissipation law}\label{subsec:EDL}

We now establish the energy dissipation property, confirming that the model is thermodynamically consistent. This is an \emph{automatic} consequence of the Onsager variational structure.

Substituting the Euler--Lagrange equations
\eqref{eq:relaxation} and \eqref{eq:macro_balance1}
into  \eqref{eq:dFdt}, we obtain
\begin{equation}\label{eq:EDL_continuous}
    \begin{aligned}
        \frac{d\mathcal{F}}{dt}
    &= \int_\Omega\int_0^\infty \kappa(\xi)\,\Psi\left(\frac{\partial\Psi}{\partial t} - \frac{\partial\phi}{\partial t}\right)d\xi\,d\mathbf{x} \\
    &= -\int_\Omega\int_0^\infty \kappa(\xi)\,\xi\,\Psi^2\,d\xi\,d\mathbf{x} 
    \leq 0. 
    \end{aligned}
\end{equation}
Furthermore, comparing with the dissipation potential \eqref{eq:dissipation}:
\begin{equation}\label{eq:Onsager_balance}
    \frac{d\mathcal{F}}{dt} = -2\Phi \leq 0,
\end{equation}
recovering the classical Onsager energy--dissipation balance. The energy dissipation rate is entirely determined by the internal elastic strain --- a manifestation of the fact that all dissipation occurs through the dashpots in the Maxwell elements.

\begin{remark}
The energy dissipation law \eqref{eq:EDL_continuous}--\eqref{eq:Onsager_balance} holds for \emph{any} positive spectrum $\kappa(\xi)>0$, not only the special power-law choice shown in the next subsection. The \emph{thermodynamic consistency} is built into the Onsager framework and does not require a separate proof for each specific model.
\end{remark}

\subsection{The Caputo fractional derivative as a power-law special case}\label{subsec:recovery}

Substituting the solution \eqref{eq:Psi_solution_preview} into \eqref{eq:macro_balance1} yields:
\begin{equation}\label{eq:memory_integral}
    -\frac{\delta E}{\delta\phi} = \int_0^\infty \kappa(\xi)\left[\int_0^t e^{-\xi(t-s)}\frac{\partial\phi}{\partial s}\,ds\right]d\xi
    = \int_0^t \left[\int_0^\infty \kappa(\xi)\,e^{-\xi(t-s)}\,d\xi\right]\frac{\partial\phi}{\partial s}\,ds.
\end{equation}
Now we choose the power-law relaxation spectrum \eqref{eq:power_law_spectrum} with $C_\alpha = [\Gamma(\alpha)\Gamma(1-\alpha)]^{-1}$. Using the identity $\int_0^\infty \xi^{\alpha-1}e^{-\xi t}\,d\xi = \Gamma(\alpha)\,t^{-\alpha}$, equation \eqref{eq:memory_integral} becomes:
\begin{equation}\label{eq:Caputo_recovery}
    -\frac{\delta E}{\delta\phi}
    = \frac{1}{\Gamma(1-\alpha)}\int_0^t (t-s)^{-\alpha}\frac{\partial\phi}{\partial s}\,ds
    = \frac{\partial^\alpha\phi}{\partial t^\alpha},
\end{equation}
where $\partial^\alpha/\partial t^\alpha$ denotes the Caputo fractional derivative of order $\alpha\in(0,1)$.
Choosing $E(\phi) = \int_{\Omega}\left(\frac{\varepsilon^2}{2}|\nabla\phi|^2 + F(\phi)\right)d\mathbf{x}$ yields the Allen--Cahn type time-fractional phase-field equations (TFPFs):
\begin{equation}\label{fractional gradient flow}
\begin{cases}
    \displaystyle\frac{\partial^\alpha\phi}{\partial t^\alpha}(\mathbf{x},t) = -\mathcal{G}\mu(\mathbf{x},t), & (\mathbf{x},t)\in\Omega\times(0,T], \\[6pt]
    \mu(\mathbf{x},t) = -\varepsilon^2\Delta\phi(\mathbf{x},t) + F'(\phi(\mathbf{x},t)), \\[4pt]
    \phi(\mathbf{x},0) = \phi_0(\mathbf{x}), & \mathbf{x}\in\Omega,
\end{cases}
\end{equation}
subject to suitable (e.g., periodic or homogeneous Neumann) boundary conditions.

\begin{remark}[Conditions for recovering a standard fractional derivative]\label{rmk:conditions}
The Caputo derivative emerges as a special case requiring three conditions simultaneously:
\begin{enumerate}
    \item \textbf{Linear elasticity:} $W(\Psi) = \frac{1}{2}\kappa(\xi)|\Psi|^2$ (quadratic internal energy);
    \item \textbf{Linear coupling:} dissipation quadratic in $(\frac{\partial\phi}{\partial t} - \frac{\partial\Psi}{\partial t})$;
    \item \textbf{Power-law spectrum:} $\kappa(\xi) \propto \xi^{\alpha-1}$.
\end{enumerate}
Breaking any one of these leads to non-standard memory equations: nonlinear $W(\Psi)$ yields nonlinear memory (analogous to Finitely Extensible Nonlinear Elastic (FENE) vs.\ Hookean dumbbells in polymer kinetic theory); non-quadratic dissipation gives non-Newtonian internal friction; non-power-law $\kappa(\xi)$ produces non-fractional memory kernels (e.g., stretched exponential). This delineates the regime of validity of fractional phase-field models.
\end{remark}

\begin{remark}[Models beyond the fractional reduction]\label{rmk:beyond_fractional}
The same construction also gives non-fractional memory models. For example, a finite Prony-series spectrum yields a finite-dimensional integer-order system coupled to the phase field; a tempered or truncated spectrum produces algebraic memory with a finite terminal time scale; a mixture of several power-law spectra gives multi-term or distributed-order memory; and nonlinear internal energies or non-quadratic dissipation potentials lead to nonlinear hereditary phase-field equations. 
All of the above examples share a common mathematical structure: the memory kernel $K(t) = \int_0^\infty \kappa(\xi)\,e^{-\xi t}\,d\xi$
is completely monotone, as guaranteed by the positivity $\kappa(\xi)\geq 0$
(Bernstein's theorem \cite{Widder1941, Schilling2012}). This is a consequence of
thermodynamic consistency: a positive relaxation spectrum is precisely the
condition for nonnegative elastic energy and dissipation (cf. \eqref{eq:free_energy} and \eqref{eq:dissipation}). The
Bernstein representation also allows decomposition of such kernels into independent exponential modes,
which enables the efficient integer-order lifting developed in our numerical methods.
\end{remark}


\begin{remark}\label{rmk:CH}
The OVP derivation above yields the Allen--Cahn type equation
$\partial^\alpha\phi/\partial t^\alpha = -\mu$ because the dissipation is measured
in the $L^2$ norm (local friction on $\dot\phi$). To obtain the
Cahn--Hilliard type equation $\partial^\alpha\phi/\partial t^\alpha = \Delta\mu$
from first principles requires a different internal-variable construction in
which the Maxwell memory acts on the \emph{transport flux} rather than on the
order parameter itself. The physical picture concerns the interaction between elastic network and mass transport and is more complex. The corresponding OVP
derivation will be our future work.
Mathematically,  one can replace the dissipation operator $\mathcal{G} = I$ by
$\mathcal{G} = -\Delta$ in  \eqref{fractional gradient flow} to produce the time-fractional Cahn--Hilliard equation.
The resulting equation inherits the memory structure and admits the same numerical 
discretization method presented in Section~\ref{sec:2}. 
\end{remark}

\begin{remark}[Energy law for the power-law reduction]
For the power-law spectrum $\kappa(\xi)=C_\alpha\xi^{\alpha-1}$, the augmented energy-dissipation law \eqref{eq:EDL_continuous}--\eqref{eq:Onsager_balance} remains the primary thermodynamic structure. Eliminating the internal modes gives the Caputo phase-field equation, but the stored memory energy is then hidden in the history of the solution rather than in the classical phase-field energy alone. This explains why time-fractional phase-field equations generally require modified, integral, or nonlocal energy laws when they are written solely in terms of the reduced variable $\phi$.
\end{remark}

\section{Internal-variable lifting of MAPF equations for power-law relaxation spectra}
\label{sec:2}

The MAPF equations introduced in the previous section involve a continuous distribution of relaxation modes. 
Directly solving the continuous relaxation system
\eqref{eq:relaxation}--\eqref{eq:macro_balance1} is computationally challenging: the unbounded domain $\xi \in (0,\infty)$ with singular power-law weight, the extreme stiffness from multi-scale relaxation times, and the full-rank coupling in \eqref{eq:macro_balance1} all hinder efficient computation. For the power-law spectrum
$\kappa(\xi)=C_\alpha \xi^{\alpha-1},$
the relaxation variables can be formally eliminated, leading to the corresponding fractional phase-field model \eqref{fractional gradient flow}. However, the resulting fractional formulation contains nonlocal temporal operators, and straightforward time discretizations usually require storing and repeatedly evaluating the entire solution history. Nevertheless, the formulation in Section \ref{sec:derivation} reveals a key insight that memory effects can be converted into local-in-time integer-order dynamics through internal-variable lifting.

Guided by this insight, we introduce in this section a transformed family of auxiliary variables that maps the relaxation spectrum onto the bounded interval $(0,1)$ with a Jacobi-type weight, yielding an equivalent local-in-time formulation of the MAPF equations amenable to spectral discretization and fast implementation. 
The resulting formulation preserves the thermodynamic properties of the original system and provides a convenient framework for efficient numerical discretization.

\subsection{Internal-variable transformation for the power-law memory kernel}
\label{subsec:Onsager_EPPF}

In this subsection, we construct an equivalent internal-variable formulation of the MAPF equations for the power-law relaxation spectrum. Starting from the relaxation representation of the memory kernel, we introduce a transformed family of auxiliary variables and derive a local-in-time formulation that avoids explicit history convolutions while preserving the underlying memory effects.

In view of \eqref{eq:Caputo_recovery}, we apply the fractional integral operator $I_t^{1-\alpha}$ to
the relaxation equation \eqref{eq:relaxation} to obtain
\[
I_t^{1-\alpha}
\left(
\frac{\partial\Psi}{\partial t}
+\xi\Psi
\right)
=
I_t^{1-\alpha}
\frac{\partial\phi}{\partial t},
\]
where $I_t^{\alpha}\phi(\mathbf{x},t) = \frac{1}{\Gamma(\alpha)}\int_0^t(t-s)^{\alpha-1}\phi(\mathbf{x},s)ds$. Under the undeformed reference state condition, i.e., $\Psi(\mathbf{x},0,\xi)=0$, the above equation can be rewritten as
\begin{equation}\label{equation of Psi}
\frac{\partial}{\partial t}
I_t^{1-\alpha}\Psi(\mathbf{x},t,\xi)
+\xi I_t^{1-\alpha}\Psi(\mathbf{x},t,\xi)
=
I_t^{1-\alpha}
\frac{\partial\phi}{\partial t}(\mathbf{x},t),
\quad \xi\in(0,+\infty).
\end{equation}

Next, we map the relaxation-rate variable $\xi\in(0,\infty)$ to an extended variable $\theta\in(0,1)$ by 
\[
\xi=c_1(\theta)=\frac{\theta}{1-\theta},
\]
and introduce 
\begin{equation}\label{relationship between varphi and Psi}
\varphi(\mathbf{x},t,\theta)
=
\phi_0(\mathbf{x})
+
c_0(\theta)
I_t^{1-\alpha}
\Psi(\mathbf{x},t,c_1(\theta)),
\end{equation}
where $c_0(\theta) = \frac{1}{1-\theta}$. It follows from the above equation and \eqref{equation of Psi} that
\begin{equation*}\label{EPPFs without fractional}
\frac{\partial\varphi}{\partial t}
(\mathbf{x},t,\theta)
+
c_1(\theta)
\left(
\varphi(\mathbf{x},t,\theta)-\phi_0(\mathbf{x})
\right)
= 
c_0(\theta)
I_t^{1-\alpha}
\frac{\partial\phi}{\partial t}
(\mathbf{x},t),
\quad \theta\in(0,1).
\end{equation*}
Consequently, substituting  \eqref{eq:Caputo_recovery} into the above equation yields
\begin{equation}\label{EPDEs0}
\begin{aligned}
&
\frac{\partial\varphi}{\partial t}
(\mathbf{x},t,\theta)
+
c_1(\theta)
(\varphi(\mathbf{x},t,\theta)-\phi_0(\mathbf{x}))
=
-c_0(\theta)
\frac{\delta E}{\delta\phi}(\mathbf{x},t),
\\
&\varphi(\mathbf{x},0,\theta)=\phi_0(\mathbf{x}).
\end{aligned}
\end{equation}

Next, we establish an explicit relation between $\varphi(\mathbf{x},t,\theta)$ and $\phi(\mathbf{x},t)$. Taking the
Laplace transform of the relaxation equation \eqref{eq:relaxation} gives
\begin{equation*}\label{sec3:01}
\widehat{\Psi}(\mathbf{x},s,c_1(\theta))
=
\frac{s\widehat{\phi}(\mathbf{x},s)-\phi_0(\mathbf{x})}
{s+c_1(\theta)}.
\end{equation*}
Meanwhile, taking the Laplace transform of
\eqref{relationship between varphi and Psi}, we have
\begin{equation*}\label{sec3:02}
\widehat{\varphi}(\mathbf{x},s,\theta)
-\frac1s\phi_0(\mathbf{x})
=
c_0(\theta)s^{\alpha-1}
\widehat{\Psi}
(\mathbf{x},s,c_1(\theta)).
\end{equation*}
It follows from the above two equations that 
\begin{equation}\label{eq:phihat}
    \widehat{\varphi}(\mathbf{x},s,\theta)
-\frac1s\phi_0(\mathbf{x})
=
c_0(\theta)s^{\alpha-1}
\frac{s\widehat{\phi}(\mathbf{x}, s)-\phi_0(\mathbf{x})}
{s+c_1(\theta)}.
\end{equation}
Let 
\begin{equation}\label{omage}
    \omega_\alpha(\theta) = \frac{\theta^{-\alpha}(1-\theta)^{\alpha-1}}{\Gamma(1-\alpha)\Gamma(\alpha)}.
\end{equation}
Then, we multiply \eqref{eq:phihat} by
$\omega_\alpha(\theta)$ and integrate over $(0,1)$ to get 
\begin{equation}\label{Laplace for varphi}
\int_0^1
\widehat{\varphi}
(\mathbf{x},s,\theta)
\omega_\alpha(\theta)d\theta
=
\frac{A}{s}\phi_0(\mathbf{x})
+
s^{\alpha-1}
(s\widehat{\phi}(\mathbf{x},s)-\phi_0(\mathbf{x}))
B(s),
\end{equation}
where
\begin{equation*}
A=\int_0^1\omega_\alpha(\theta)d\theta = 1,\quad B(s)=
\int_0^1
\frac{c_0(\theta)}
{s+c_1(\theta)}
\omega_\alpha(\theta)d\theta 
= s^{-\alpha}.
\end{equation*}
Here we calculate $B(s)$ by using the Euler-Gauss hypergeometric
integral formula \cite{Abramowitz1965}.
Applying the inverse Laplace transform to
\eqref{Laplace for varphi}, we recover
\begin{equation}\label{eq:transform}
\phi(\mathbf{x},t)
=
\int_0^1
\varphi(\mathbf{x},t,\theta)
\omega_\alpha(\theta)d\theta
:=
\mathcal{C}[\varphi](\mathbf{x},t).
\end{equation}

Consequently, we obtain from \eqref{EPDEs0} the following \emph{internal-variable formulation of the MAPF equations}:
\begin{equation}\label{EPDEs}
    \begin{aligned}
        &\frac{\partial \varphi}{\partial t}(\mathbf{x},t,\theta) + c_{1}(\theta)\left(\varphi(\mathbf{x},t,\theta) - \phi_{0}(\mathbf{x})\right) = -c_{0}(\theta)\mathcal{G}\mu(\mathbf{x},t), \\
        & \mu(\mathbf{x},t) = -\varepsilon^{2}\Delta(\mathcal{C}[\varphi](\mathbf{x},t)) + f(\mathcal{C}[\varphi](\mathbf{x},t)),\quad \mathbf{x}\in\Omega, \theta\in I = (0,1),t\in(0,T],\\
        &\varphi(\mathbf{x},0,\theta) = \phi_{0}(\mathbf{x}),\quad \mathbf{x}\in\Omega, \theta\in I = (0,1).
    \end{aligned}
\end{equation}
The original phase-field variable $\phi$ is recovered through the averaging operator \eqref{eq:transform}.


\begin{remark}
 
We note that the internal-variable formulation \eqref{EPDEs} is mathematically equivalent, in a weak sense, to the augmented integer-order formulation (3.11) in \cite{Fritz2021}, where the well-posedness of the corresponding problem has been established. However, the two formulations arise from fundamentally different modeling perspectives. In \cite{Fritz2021}, the augmented system is constructed from a prescribed time-fractional gradient flow to reveal its hidden integer-order structure and associated energy structure. In contrast, the present formulation is derived from a thermodynamically consistent relaxation-spectrum-based model, where the internal variables represent physical relaxation modes associated with stored memory energy and dissipation mechanisms. Therefore, although the resulting systems are equivalent in the power-law case, the present derivation provides a different physical interpretation of memory effects and naturally extends to more general relaxation spectra beyond the fractional setting.
\end{remark}

\subsection{Energy dissipation and mass conservation}
The above internal-variable formulation preserves the essential thermodynamic properties of the original MAPF equations. In particular, it satisfies the energy dissipation law and, in the Cahn-Hilliard case with $\mathcal{G}=-\Delta$, conserves the total mass of the phase variable.

Before presenting the energy dissipation law, we define the following weights:
\begin{equation}\label{OME}
    \omega_{\alpha,0}(\theta) = (1-\theta)\omega_{\alpha}(\theta),\quad \omega_{\alpha,1}(\theta) = \theta\omega_{\alpha}(\theta),
\end{equation}
and the weighted inner product and norm:
\begin{equation*}
\begin{aligned}
    &(u,w)_{L^{2}(\Omega;L_{\omega_{\alpha}}^{2}(I))} = \int_{\Omega}\int_{0}^{1}uw\omega_{\alpha}(\theta)d\theta d\mathbf{x},\\
    & \Vert u\Vert^{2}_{L^{2}(\Omega;L_{\omega_{\alpha}}^{2}(I))} = \int_{0}^{1}\Vert u\Vert_{L^{2}(\Omega)}^{2}\omega_{\alpha}(\theta)d\theta,
\end{aligned}
\end{equation*}
where $\omega_{\alpha}(\theta)$ is given in \eqref{omage}. 

The energy dissipation law for the internal-variable formulation \eqref{EPDEs} is given as follows:
\begin{theorem}\label{energy dissipation for AC}
    Let $\varphi$ be the solution of \eqref{EPDEs}. Then the following energy dissipation law holds:
    \begin{equation}
        \label{eq:energy_law_metric}
        \frac{d\mathcal{E}(\varphi)}{dt} = -\left\Vert\mathcal{G}^{-1/2}\frac{\partial \varphi}{\partial t}\right\Vert^2_{L^{2}(\Omega;L_{\omega_{\alpha,0}}^{2}(I))},\quad t>0,
    \end{equation}
    where
    \begin{equation}\label{energy of AC}
        \mathcal{E}(\varphi) = E(\mathcal{C}[\varphi])  + \frac{1}{2}\left\Vert\mathcal{G}^{-1/2}(\varphi - \phi_{0})\right\Vert^2_{L^2(\Omega;L_{\omega_{\alpha,1}}^{2}(I))}.
    \end{equation} 
    
\end{theorem}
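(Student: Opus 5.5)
The plan is to differentiate $\mathcal{E}(\varphi(t))$ in time, express each resulting term through the first equation of \eqref{EPDEs}, and see that the cross terms cancel. The key structural facts are elementary identities among the weights. Since $c_0(\theta)=1/(1-\theta)$ and $c_1(\theta)=\theta/(1-\theta)$, \eqref{OME} gives
\[
c_0\,\omega_{\alpha,0}=\omega_\alpha,\qquad c_1\,\omega_{\alpha,0}=\omega_{\alpha,1}.
\]

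\textbf{Step 1: derivative of $E(\mathcal{C}[\varphi])$.} Since $\phi=\mathcal{C}[\varphi]$ by \eqref{eq:transform} and $\mathcal{C}$ is linear and time-independent, the chain rule gives
\[
\frac{d}{dt}E(\mathcal{C}[\varphi])=\Big(\mu,\mathcal{C}\Big[\frac{\partial\varphi}{\partial t}\Big]\Big)=\int_0^1\Big(\mu,\frac{\partial\varphi}{\partial t}\Big)\omega_\alpha\,d\theta .
\]
Here $\mu=-\varepsilon^2\Delta\phi+f(\phi)$, and the periodic or Neumann boundary conditions remove the boundary term.

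\textbf{Step 2: derivative of the memory energy.} Differentiating the second term of \eqref{energy of AC} gives
\[
\int_0^1\Big(\mathcal{G}^{-1}(\varphi-\phi_0),\frac{\partial\varphi}{\partial t}\Big)\omega_{\alpha,1}\,d\theta .
\]

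\textbf{Step 3: use the equation.} Apply $\mathcal{G}^{-1}$ to the first equation of \eqref{EPDEs}:
\[
\mathcal{G}^{-1}\partial_t\varphi+c_1\,\mathcal{G}^{-1}(\varphi-\phi_0)=-c_0\,\mu .
\]
Take the $L^2(\Omega)$ inner product with $\partial_t\varphi$, multiply by $\omega_{\alpha,0}$, and integrate over $\theta\in(0,1)$. By the weight identities above, this yields
\[
\Big\|\mathcal{G}^{-1/2}\partial_t\varphi\Big\|^2_{L^2(\Omega;L^2_{\omega_{\alpha,0}}(I))}+\int_0^1\Big(\mathcal{G}^{-1}(\varphi-\phi_0),\partial_t\varphi\Big)\omega_{\alpha,1}\,d\theta=-\int_0^1(\mu,\partial_t\varphi)\,\omega_\alpha\,d\theta .
\]
Rearranged, this says exactly that the sum of the derivatives from Steps 1 and 2 equals $-\|\mathcal{G}^{-1/2}\partial_t\varphi\|^2_{L^2(\Omega;L^2_{\omega_{\alpha,0}}(I))}$, which is \eqref{eq:energy_law_metric}. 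Note that $\mathcal{G}^{-1}$ is self-adjoint and positive, so $(\mathcal{G}^{-1}v,v)=\|\mathcal{G}^{-1/2}v\|^2$.

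\textbf{Main obstacle: the meaning of $\mathcal{G}^{-1}$ when $\mathcal{G}=-\Delta$.} In this case $\mathcal{G}^{-1}$ must be defined on mean-zero functions, so one must check that $\varphi-\phi_0$ and $\partial_t\varphi$ have zero mean for every $\theta$. This follows by integrating \eqref{EPDEs} over $\Omega$. Set $m(t,\theta)=\int_\Omega(\varphi-\phi_0)\,d\mathbf{x}$. Then $\partial_t m+c_1 m=-c_0\int_\Omega\Delta\mu\,d\mathbf{x}=0$ with $m(0,\theta)=0$, so $m\equiv0$. This also gives mass conservation of $\phi=\mathcal{C}[\varphi]$.

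A secondary technical point is justifying the exchange of $d/dt$ and $\int_0^1\cdots\,d\theta$ given the singular weights at $\theta=0,1$. Here we assume the solution is regular enough that all weighted integrals are finite, consistent with the assumptions of \S\ref{subsec:OVP}.
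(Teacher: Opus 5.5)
Your proposal is correct and follows essentially the same route as the paper. Both arguments test \eqref{EPDEs} with $\omega_{\alpha,0}(\theta)\,\mathcal{G}^{-1}\partial_t\varphi$, use $c_0\omega_{\alpha,0}=\omega_\alpha$ and $c_1\omega_{\alpha,0}=\omega_{\alpha,1}$, and identify the right-hand side as $-\frac{d}{dt}E(\mathcal{C}[\varphi])$ through $\mathcal{C}[\partial_t\varphi]=\partial_t\mathcal{C}[\varphi]$; your extra check that $\varphi-\phi_0$ and $\partial_t\varphi$ have zero spatial mean for each $\theta$ when $\mathcal{G}=-\Delta$, which also disposes of the spatial mean of $\mu$ lost in $\mathcal{G}^{-1}\mathcal{G}\mu$, is a point the paper's proof leaves implicit.
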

\begin{proof}
    Testing \eqref{EPDEs} with $\omega_{\alpha,0}(\theta)\,\mathcal{G}^{-1}\frac{\partial \varphi}{\partial t}$ gives 
    \begin{equation}\label{energy_dissipation_1}
        \begin{aligned}
            &\left(\mathcal{G}^{-1}\frac{\partial \varphi}{\partial t},\frac{\partial \varphi}{\partial t}\right)_{L^{2}(\Omega;L_{\omega_{\alpha,0}}^{2}(I))} + \left(\mathcal{G}^{-1}(\varphi - \phi_{0}),\frac{\partial \varphi}{\partial t}\right)_{L^{2}(\Omega;L_{\omega_{\alpha,1}}^{2}(I))}\\
            & = \varepsilon^{2}\left(\Delta (\mathcal{C}[\varphi]),\frac{\partial \varphi}{\partial t}\right)_{L^{2}(\Omega;L_{\omega_{\alpha}}^{2}(I))} - \left(f(\mathcal{C}[\varphi]),\frac{\partial \varphi}{\partial t}\right)_{L^{2}(\Omega;L_{\omega_{\alpha}}^{2}(I))},
        \end{aligned}
    \end{equation}
    where $\omega_{\alpha,1}(\theta), \omega_{\alpha,0}(\theta)$ are given in \eqref{OME}. Note that $\phi = \mathcal{C}[\varphi] = \int_{0}^{1}\varphi(\mathbf{x},t,\theta)\omega_{\alpha}(\theta)d\theta$, thus we have
    \begin{equation*}
    \begin{aligned}
         \varepsilon^{2}\left(\Delta (\mathcal{C}[\varphi]),\frac{\partial \varphi}{\partial t}\right)_{L^{2}(\Omega;L_{\omega_{\alpha}}^{2}(I))} &= -\varepsilon^{2}\left(\nabla(\mathcal{C}[\varphi]),\nabla(\int_{0}^{1}\frac{\partial \varphi}{\partial t}(\mathbf{x},t,\theta)\omega_{\alpha}(\theta)d\theta)\right)_{L^{2}(\Omega)}\\
         & = -\frac{d}{dt}\left(\frac{\varepsilon^{2}}{2}\Vert\nabla(\mathcal{C}[\varphi])\Vert^2_{L^{2}(\Omega)}\right).
    \end{aligned}
    \end{equation*}
     Applying the same argument used in the above equation to the last term of \eqref{energy_dissipation_1}, we conclude that 
    \begin{equation*}
    \begin{aligned}
        &\frac{d}{dt}\int_{\Omega}\left(\frac{\varepsilon^{2}}{2}\vert\nabla(\mathcal{C}[\varphi])\vert^{2} + F(\mathcal{C}[\varphi]) + \frac{1}{2}\left(\mathcal{G}^{-1}(\varphi - \phi_{0}),\varphi - \phi_{0}\right)_{L_{\omega_{\alpha,1}}^{2}(I)}\right)dx \\
        & = -\left\Vert\mathcal{G}^{-1/2}\frac{\partial \varphi}{\partial t}\right\Vert^2_{L^{2}(\Omega;L_{\omega_{\alpha,0}}^{2}(I))},
    \end{aligned}   
    \end{equation*}
    where we use the equality $\frac{\partial }{\partial t} (\varphi - \phi_{0}) = \frac{\partial \varphi }{\partial t}$, and $\phi_{0}$ is given in \eqref{fractional gradient flow}.
\end{proof}
\begin{remark}
The energy $\mathcal{E}(\varphi)$ defined in \eqref{energy of AC} has the same structural form as the free energy $\mathcal{F}(\phi,\Psi)$ in \eqref{eq:free_energy}: both consist of a Ginzburg--Landau energy term plus a strictly positive term representing stored memory. Compared with the variational energy functional $\mathcal{E}_{\alpha}$ proposed in \cite{LiaoTao2021} and the nonlocal energy $\overline{E}$ proposed in \cite{HouXu2021}, the energy \eqref{energy of AC} has a direct internal-variable interpretation.

\end{remark}

We now present the mass-conservation properties for the Cahn--Hilliard case with $\mathcal{G} = -\Delta$.
\begin{theorem}\label{mass_preserving}
    Let $\varphi$ be the solution of \eqref{EPDEs} with $\mathcal{G} = -\Delta$. Then $\varphi$ preserves the following global mass-conservation and point-wise (in $\theta$) mass-conservation: 
    \begin{align}
        &\int_{\Omega}\mathcal{C}[\varphi](\mathbf{x},t)d\mathbf{x} = \int_{\Omega}\phi_0(\mathbf{x})d\mathbf{x},\quad t\geq 0,\label{global mass conservation} \\ 
&\int_{\Omega}\varphi(\mathbf{x},t,\theta)d\mathbf{x} = \int_{\Omega}\phi_0(\mathbf{x})d\mathbf{x},\quad t\geq0,\quad \theta\in (0,1).\label{pointwise mass conservation}
    \end{align}
\end{theorem}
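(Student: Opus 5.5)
The plan is to prove the pointwise identity \eqref{pointwise mass conservation} first, for each fixed $\theta\in(0,1)$, and then obtain the global identity \eqref{global mass conservation} by averaging against $\omega_\alpha$. Throughout I assume periodic or homogeneous Neumann boundary conditions for $\mu$, so that $\int_\Omega \Delta\mu\,d\mathbf{x}=0$.

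Fix $\theta\in(0,1)$ and set $m(t,\theta)=\int_\Omega\big(\varphi(\mathbf{x},t,\theta)-\phi_0(\mathbf{x})\big)\,d\mathbf{x}$. Integrating the first equation of \eqref{EPDEs} with $\mathcal{G}=-\Delta$ over $\Omega$ gives
\[
\frac{d m}{dt}(t,\theta)+c_1(\theta)\,m(t,\theta)=c_0(\theta)\int_\Omega\Delta\mu(\mathbf{x},t)\,d\mathbf{x}=0 ,
\]
by the divergence theorem and the boundary conditions. This is a linear scalar ODE in $t$ with constant coefficient $c_1(\theta)=\theta/(1-\theta)$. The initial condition $\varphi(\mathbf{x},0,\theta)=\phi_0(\mathbf{x})$ gives $m(0,\theta)=0$. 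Hence $m(t,\theta)=e^{-c_1(\theta)t}m(0,\theta)=0$ for all $t\ge0$, which is exactly \eqref{pointwise mass conservation}.

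For the global statement, write
\[
\int_\Omega\mathcal{C}[\varphi]\,d\mathbf{x}=\int_0^1\Big(\int_\Omega\varphi\,d\mathbf{x}\Big)\omega_\alpha(\theta)\,d\theta ,
\]
using Fubini. Insert the pointwise result and use $\int_0^1\omega_\alpha(\theta)\,d\theta=A=1$, already computed from the Beta integral $B(1-\alpha,\alpha)=\Gamma(1-\alpha)\Gamma(\alpha)$. This yields \eqref{global mass conservation}.

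The only delicate point is justifying the interchange of integrals and the differentiation under $\int_\Omega$. The weight $\omega_\alpha$ is singular at both endpoints, but it is integrable there. The interchanges are justified if $\varphi\in L^1(\Omega;L^1_{\omega_\alpha}(I))$, which follows from the weighted $L^2$ regularity implicit in the energy law of Theorem~\ref{energy dissipation for AC}, since $\omega_\alpha$ is a finite measure. The boundary condition $\int_\Omega\Delta\mu=0$ is the key structural input; everything else is elementary.
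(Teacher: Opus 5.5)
Your proposal is correct and follows essentially the paper's argument. Integrating over $\Omega$ (the paper does this by testing with $v=1$) gives the per-$\theta$ ODE $\chi'+c_1(\theta)\chi=0$ with $\chi(0,\theta)=0$, which yields pointwise conservation. You then get the global identity directly by averaging against $\omega_\alpha$ with $\int_0^1\omega_\alpha\,d\theta=1$, whereas the paper integrates the weighted identity \eqref{mass_conti:01} in time and implicitly uses the same normalization through $\mathcal{C}[\phi_0]=\phi_0$.

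One small caveat concerns your regularity justification. Appealing to the energy law does not really work for $\mathcal{G}=-\Delta$: there the memory term is an $H^{-1}$-type norm of $\varphi-\phi_0$, which already presupposes zero mean and does not control $L^1$. It is cleaner to take $\varphi\in L^2(\Omega;L^2_{\omega_\alpha}(I))$ as part of the notion of solution, as the paper's weighted inner products implicitly do.
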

\begin{proof}
    Testing \eqref{EPDEs} with $\omega_{\alpha}(\theta)\, v$ gives 
    \begin{equation*}
    \begin{aligned}
        &\left(\frac{\partial \varphi(t)}{\partial t},v\right)_{L^{2}(\Omega;L^{2}_{\omega_{\alpha}}(I))} + \left(c_1(\theta)(\varphi - \phi_0),v\right)_{L^{2}(\Omega;L^{2}_{\omega_{\alpha}}(I))} \\
        & =  -\left(\varepsilon^{2}c_{0}(\theta)(-\Delta)(\mathcal{C}[\varphi]) + c_{0}(\theta)f(\mathcal{C}[\varphi]),(-\Delta)v\right)_{L^{2}(\Omega;L^{2}_{\omega_{\alpha}}(I))}.
    \end{aligned}
 \end{equation*}
 Taking $v = 1$ in the above equation, since $(-\Delta)v = 0$, we obtain 
 \begin{equation}\label{mass_conti:01}
     \left(\frac{\partial \varphi(t)}{\partial t},1\right)_{L^{2}(\Omega;L^{2}_{\omega_{\alpha}}(I))} =- \left(c_1(\theta)(\varphi - \phi_0),1\right)_{L^{2}(\Omega;L^{2}_{\omega_{\alpha}}(I))}.
 \end{equation}
Next, taking the $L^2(\Omega)$ inner product of \eqref{EPDEs} with $v = 1$ yields 
\begin{equation}\label{mass_conti:02}
    \frac{\partial}{\partial t}\int_\Omega \left(\varphi-\phi_0\right) d\mathbf{x} + c_1(\theta)\int_\Omega\left(\varphi - \phi_0 \right)d\mathbf{x} = 0.
\end{equation}
Define $\chi(t,\theta) = \int_\Omega\left(\varphi - \phi_0\right)d\mathbf{x}$. Then the above equation can be reformulated as the following parametric ODEs:
\begin{equation*}
    \chi' + \sigma \chi = 0, \quad \chi(0,\theta) = 0,
\end{equation*}
with $\sigma = c_1(\theta)> 0$. The exact solution of the above ODE is given by:
\begin{equation}\label{mass_conti:03}
    \chi = \int_\Omega\left(\varphi - \phi_0 \right)d\mathbf{x} = 0.
\end{equation}
Therefore, integrating \eqref{mass_conti:01} and \eqref{mass_conti:02} with respect to $t$ from $0$ to $t$ gives
\begin{equation*}
\begin{aligned}
    \int_{\Omega}\mathcal{C}[\varphi](\mathbf{x},t)d\mathbf{x} = \int_{\Omega}\phi_0(\mathbf{x})d\mathbf{x},\quad \int_{\Omega}\varphi(\mathbf{x},t,\theta)d\mathbf{x} = \int_{\Omega}\phi_0(\mathbf{x})d\mathbf{x},\quad t\geq0,\quad \theta\in (0,1),
\end{aligned}
\end{equation*}
thanks to \eqref{mass_conti:03}.
\end{proof}
\begin{remark}

According to Theorem \ref{mass_preserving}, the formulation \eqref{EPDEs} preserves both global mass conservation and pointwise mass conservation with respect to the relaxation variable $\theta$. The latter represents a stronger property, indicating that the solution associated with each individual relaxation mode preserves the total mass. 
This property provides a theoretical foundation for developing high-fidelity numerical schemes for memory-dependent phase-field dynamics.
\end{remark}
\begin{remark}
    For the time-fractional formulation, the interchange between the spatial integral and the Caputo derivative relies on the definition of the Caputo derivative and Fubini's theorem \cite{Kilbas2006Theory}. In particular, under the regularity assumption
\[
\phi\in C^{0,\alpha}(0,T;L^1(\Omega)),
\]
one has
\begin{equation*}
    \int_{\Omega}\frac{\partial^\alpha\phi}{\partial t^\alpha}d\mathbf{x}
    =
    \frac{d^\alpha}{dt^\alpha}
    \int_{\Omega}\phi d\mathbf{x}.
\end{equation*}

In contrast, for the internal-variable formulation, the corresponding relation involves only the classical weak time derivative of the averaged variable \(\mathcal C[\phi]\). For instance, if
\[
\phi = \mathcal C[\varphi]\in W^{1,1}(0,T;L^1(\Omega)),
\]
then
\[
    \int_{\Omega}\frac{\partial\mathcal{C}[\varphi]}{\partial t}d\mathbf{x}
    =
    \frac{d}{dt}\int_{\Omega}\mathcal{C}[\varphi]d\mathbf{x}
\]
holds in the distributional sense. Thus, the internal-variable formulation avoids the more delicate interchange between a spatial integral and a nonlocal Caputo derivative, and provides a more direct framework for establishing mass conservation.
\end{remark}

\section{ESAV fully discrete schemes and fast implementation}\label{sec:3}
In this section, we propose  first- and second-order stabilized ESAV schemes~(cf. \cite{sESAV_2022}) for the time discretization, while we use the Fourier-spectral method in space and the Jacobi-spectral collocation method for the internal-variable. We also propose a fast rank-one algorithm for the proposed fully discrete schemes. Under periodic boundary conditions, we establish the discrete structure-preserving properties of the proposed schemes, including discrete energy dissipation and mass conservation.

The numerical methods are developed for the internal-variable formulation of MAPF equations rather than the fractional representation, since the former avoids explicit history evaluation and provides a local-in-time computational framework.

\subsection{Fully discrete scheme}

Let us first introduce Fourier spectral method and Jacobi spectral collocation method. We partition the domain $\Omega = [0,2\pi)^{2}$ uniformly with size $h_{l} = 2\pi/N_{l},\,l= x,y$, where $N_{x}, N_{y}$ are the positive integers. Let $\{\theta_s\}_{s=0}^M$ be the Jacobi-Gauss points in $(0,1)$ with respect to the weight $\Gamma(\alpha) \Gamma(1-\alpha)\omega_{\alpha}(\theta) = \theta^{-\alpha}(1-\theta)^{\alpha - 1}$, and $\{h_s(\theta)\}_{s=0}^M$ be the Lagrange interpolation functions with respect to the Jacobi-Gauss points $\{\theta_s\}_{s=0}^M$, respectively. 

In what follows, we take $N = N_{x} = N_{y}$ for simplicity. The fully discrete approximation space is:
\begin{equation*}
    \mathcal{M}(\Omega\times I) = span\left\{e^{ik x_{1}}e^{im x_{2}}h_{j}(\theta):-\frac{N}{2}\leq k,m\leq \frac{N}{2}- 1,0\leq j\leq M\right\},
\end{equation*}
where $i = \sqrt{-1}$.

 Then any function $\varphi(x_{1},x_{2},\theta)\in \Omega\times I$ can be approximated by :
\begin{equation*}
    \varphi(x_{1},x_{2},\theta)\approx \varphi_{NM}(x_{1},x_{2},\theta) = \sum\limits_{j=0}^{M}\sum\limits_{k=-N/2}^{N/2-1}\sum\limits_{m=-N/2}^{N/2-1}\widehat{\varphi}_{km}(x_1,x_2,\theta_{j})e^{ikx_{1}}e^{imx_{2}}h_{j}(\theta),
\end{equation*}
where the Fourier coefficients are given by 
\begin{equation*}\label{fourier coefficient}
    \widehat{\varphi}^{n+1}_{km}(x_1,x_2,\theta_{j}) = \frac{1}{\vert\Omega\vert}\int_{\Omega}\varphi^{n+1}(x_{1},x_{2},\theta_{j})e^{-i(kx_{1} + mx_{2})}d\mathbf{x},
\end{equation*}
and the function $\phi_{NM}:=\mathcal{C}[\varphi_{NM}]$ is computed by using the Gauss-Jacobi quadrature, namely, 
\begin{equation*}\label{Cphi}
\begin{aligned}
     \mathcal{C}[\varphi_{NM}](x_1,x_2) =  \int_{0}^{1} \varphi_{NM}(x_1,x_2,\theta)\omega_{\alpha}(\theta)d\theta
    =   \sum\limits_{j = 0}^{M}\varphi_{NM}(x_1,x_2,\theta_{j})\omega_{j},
\end{aligned}
\end{equation*}
where $\omega_{j} =  \int_{0}^{1}h_{j}(\theta)\omega_{\alpha}(\theta)d\theta,\, j = 0,\cdots,M$ are the corresponding Jacobi-Gauss weights.

To construct the stabilized ESAV schemes for \eqref{EPDEs}, we introduce  $s(t) = E_1(\mathcal{C}[\varphi]) = \int_{\Omega}F(\mathcal{C}[\varphi])d\mathbf{x}$, then we rewrite \eqref{EPDEs} as the following system:
 \begin{equation}\label{sESAV}
     \begin{aligned}
         &(1-\theta)\frac{\partial\varphi}{\partial t} + \theta \left(\varphi-\phi_0\right) = -\mathcal{G}\mu,\\
         &\mu = -\varepsilon^2\Delta\left(\mathcal{C}[\varphi]\right) + g\left(\mathcal{C}[\varphi],s(t)\right)f\left(\mathcal{C}[\varphi]\right),\\
         & \frac{d s(t)}{dt} = g\left(\mathcal{C}[\varphi],s(t)\right)\int_{\Omega}f\left(\mathcal{C}[\varphi]\right)\frac{\partial\mathcal{C}[\varphi]}{\partial t}d\mathbf{x},
     \end{aligned}
 \end{equation}
 where $g\left(\mathcal{C}[\varphi],s(t)\right) = \exp(s(t))/\exp(E_1(\mathcal{C}[\varphi]))$. Let $N_T>0$ be a positive integer and set 
\begin{equation*}
    \Delta t = T/N_{T},\quad t_{n} = n\Delta t\quad \text{for} \quad n\leq N_{T}.
\end{equation*}
%

We then present the first- and second-order schemes for \eqref{sESAV} as follows:
\begin{itemize}
    \item \textbf{Scheme I (First order scheme)}: Find $\varphi^{n+1}_{NM}\in \mathcal{M}(\Omega\times I)$, such that:
\begin{equation}\label{ESAV_first_order_fully}
    \begin{aligned}
    &\frac{1}{\Delta t}\left(\mathcal{G}^{-1}(\varphi_{NM}^{n+1} - \varphi_{NM}^{n}),q_{NM}\right)_{L^2(\Omega;L^2_{\omega_{\alpha,0}}(I))}\\
    &+ \left(\mathcal{G}^{-1}(\varphi_{NM}^{n+1} - \phi_{0}),q_{NM}\right)_{L^2(\Omega;L^2_{\omega_{\alpha,1}}(I))}\\
    &=- \varepsilon^{2}\left( \nabla \left(\mathcal{C}[\varphi_{NM}^{n+1}]\right), \nabla \left(\mathcal{C}[q_{NM}]\right)\right)_{L^2(\Omega)}  \\
    &-\left(g(\mathcal{C}[\varphi_{NM}^n],s^n)f(\mathcal{C}[\varphi^{n}_{NM}]),\mathcal{C}[q_{NM}]\right)_{L^2(\Omega)}\\
    & - S\left(\mathcal{G}(\varphi_{NM}^{n+1}-\varphi_{NM}^{n}),q_{NM}\right)_{L^2(\Omega;L^2_{\omega_{\alpha}}(I))},\\
    & s^{n+1} - s^n = g(\mathcal{C}[\varphi_{NM}^{n}],s^n)\int_{\Omega}f(\mathcal{C}[\varphi_{NM}^{n}])\mathcal{C}[\varphi_{NM}^{n+1} - \varphi_{NM}^{n}]d\mathbf{x},
    \end{aligned}
\end{equation}
where $\forall q_{NM}\in \mathcal{M}(\Omega\times I)$ and $S\geq0$ is a stabilization constant.

\item \textbf{Scheme II (Second order scheme)}: Find $\varphi^{n+1}_{NM}\in \mathcal{M}(\Omega\times I)$, such that:
\begin{equation}\label{ESAV_second_order_fully}
    \begin{aligned}
       &\frac{1}{\Delta t}\left(\mathcal{G}^{-1}(\varphi_{NM}^{n+1} - \varphi_{NM}^{n}),q_{NM}\right)_{L^2(\Omega;L^2_{\omega_{\alpha,0}}(I))}\\
       &+ \frac{1}{2}\left(\mathcal{G}^{-1}(\varphi_{NM}^{n+1}+\varphi_{NM}^{n} - 2\phi_{0}),q_{NM}\right)_{L^2(\Omega;L^2_{\omega_{\alpha,1}}(I))} \\
       &=-\frac{\varepsilon^{2}}{2}\left( \nabla \left(\mathcal{C}[\varphi_{NM}^{n+1} + \varphi_{NM}^{n}]\right), \nabla \left(\mathcal{C}[q_{NM}]\right)\right)_{L^2(\Omega)}\\
       &- \left(g(\mathcal{C}[\varphi_{NM}^{n+\frac{1}{2}}],s^{n+\frac{1}{2}})f(\mathcal{C}[\varphi^{n+\frac{1}{2}}_{NM}]),\mathcal{C}[q_{NM}]\right)_{L^2(\Omega)}\\
       & - S\left(\mathcal{G}(\varphi_{NM}^{n+1}-2\varphi_{NM}^{n}+\varphi_{NM}^{n-1}),q_{NM}\right)_{L^2(\Omega;L^2_{\omega_{\alpha}}(I))},\\
    & s^{n+1} - s^n = g(\mathcal{C}[\varphi_{NM}^{n+\frac{1}{2}}],s^{n+\frac{1}{2}})\int_{\Omega}f(\mathcal{C}[\varphi_{NM}^{n+\frac{1}{2}}])\mathcal{C}[\varphi^{n+1}_{NM}-\varphi^{n}_{NM}]d\mathbf{x},
    \end{aligned}
\end{equation}
where $\varphi^{n+\frac{1}{2}} = \frac{3}{2}\varphi^{n} - \frac{1}{2}\varphi^{n-1}$, $\forall q_{NM}\in \mathcal{M}(\Omega\times I)$.
\end{itemize}

In the following, we only give the implementation details and theoretical results for the second-order scheme. For the first-order scheme, these can be easily shown in a similar way.

 \subsection{Fast rank-one algorithm}
 In our previous work \cite{DingMaoLiu}, we proposed a fast algorithm based on the generalized characteristic decomposition. However, the computational cost of the proposed fast algorithm is dominated by dense matrix-vector multiplications, requiring two such multiplications per time step.
 
 To achieve further acceleration, we develop the following fast rank-one algorithm that avoids characteristic decomposition and only involves sparse matrix multiplications. To this end, we rewrite the linear system of the fully discrete scheme \eqref{ESAV_second_order_fully}:
 \begin{equation*}
         \mathbf{A}\mathbf{\widehat{U}}^{n+1} + \mathbf{B}\mathbf{\widehat{U}}^{n+1}\mathbf{C} = \mathbf{D}\mathbf{\widehat{U}}^{n} - \mathbf{B}\mathbf{\widehat{U}}^{n}\mathbf{C} + \mathbf{F},
     \end{equation*}
      where 
    \begin{equation*}
        \begin{aligned}
            &\mathbf{A} = {\rm{diag}}(A) + S\Delta t  \mathbf{I}_{M},\quad A = \left[1-\theta_{0} + \frac{\Delta t}{2} \theta_{0},\cdots,1-\theta_{M} + \frac{\Delta t}{2} \theta_{M}\right]^{T},\\
            & \mathbf{B} = \mathbf{1}_M\rm{W}^T,\quad \mathbf{1}_M = [1,1,\cdots,1]^T ,\quad  \rm{W} = \left[\omega_{0},\omega_{1},\cdots,\omega_{M}\right]^{T},\\
            & \mathbf{C} = \frac{\varepsilon^2 \Delta t}{2}\left[ K_1^2 + (K_2^2)^T\right],K_1,K_2 = \left[0,1,\cdots,N/2-1,-N/2,\cdots,-1\right].\\
            &\mathbf{D} = {\rm{diag}}(D) + 2S\Delta t  \mathbf{I}_{M},\quad D = \left[1-\theta_{0} - \frac{\Delta t}{2}\theta_0,\cdots,1-\theta_{M} - \frac{\Delta t}{2}\theta_M\right]^T,\\
            & \mathbf{F} = \Delta t \hat{\phi}^{0}\otimes{\rm{diag}}\left([\theta_{0},\cdots,\theta_{M}]\right) - \Delta t\widehat{f}(\mathcal{C}[\varphi^{n+\frac{1}{2}}_{NM}])\otimes \mathbf{I}_{M} - S\Delta t\mathbf{\widehat{U}}^{n-1},\\
            & \widehat{\mathbf{U}}^{n+1} = \left[\widehat{\varphi}^{n+1}_{km}(\theta_{0}),\cdots,\widehat{\varphi}^{n+1}_{km}(\theta_{M})\right]^{T},-\frac{N}{2}\leq k,m\leq \frac{N}{2}-1,
        \end{aligned}
    \end{equation*}    
    where $\Delta t$ is time step size, $\mathbf{I}_{M}$ is the identity operator, and $\otimes$ is the tensor product.
    
It is worth noting that  $\mathbf{B}$ is a rank-one matrix whose rows are all identical, i.e.,
     \begin{equation*}
         \mathbf{B} = \mathbf{1}_M\rm{W}^T,\quad \mathbf{1}_M = [1,1,\cdots,1]^T \in \mathbb{R}^{(M+1)\times1},\quad  \rm{W}^T\in \mathbb{R}^{1\times(M+1)}.
     \end{equation*}

     Hence, we reformulate the matrix form as follows: for $\mathit{k,m} \in K,M,$
     \begin{equation}\label{fast_rank1:01}
\mathbf{A}\mathbf{\widehat{U}}_{km}^{n+1}+\mathbf{c}_{km}\mathbf{1}_{M}(\rm{W}^T\mathbf{\widehat{U}}_{\mathit{km}}^{\mathit{n}+1}) = \mathbf{D}\mathbf{\widehat{U}}_{\mathit{km}}^{\mathit{n}} - \mathbf{c}_{\mathit{km}}\mathbf{1}_{M}(\rm{W}^T\mathbf{\widehat{U}}_{\mathit{km}}^{\mathit{n}}) + \mathbf{F}_{\mathit{km}},
     \end{equation}
     where $\mathbf{F}_{km}\in \mathbb{R}^{(M+1)\times 1}$, $\mathbf{c}_{km}$ denotes the element in the 
$k$-th row and 
$m$-th column of the discrete Laplace operator matrix $\mathbf{C}$. 

Denote $\gamma_{km}^{n+1} = \rm{W}^T\mathbf{\widehat{U}}^{\mathit{n}+1}_{\mathit{km}}\in \mathbb{R}^{1\times 1}$, $\mathbf{H}^{n}_{km} = \mathbf{D}\mathbf{\widehat{U}}_{\mathit{km}}^{n} + \mathbf{F}_{km}$. Multiplying  both sides of \eqref{fast_rank1:01} by $\mathbf{A}^{-1}$, we obtain:
     \begin{equation}\label{fast_rank1:02}
         \mathbf{\widehat{U}}_{km}^{n+1} = \mathbf{A}^{-1}\mathbf{H}^{n}_{km}- \mathbf{c}_{km}\gamma^{n+1}_{km}\mathbf{A}^{-1}\mathbf{1}_{M}- \mathbf{c}_{km}\gamma_{km}^{n}\mathbf{A}^{-1}\mathbf{1}_{M}. 
     \end{equation}
     Note that $\mathbf{A}$ is a diagonal matrix, then $\mathbf{A}^{-1}\mathbf{H}^{n}_{km}, \mathbf{A}^{-1}\mathbf{1}_{M}$ are vectors in $\in \mathbb{R}^{(M+1)\times 1}$.

     Next, multiplying both sides of \eqref{fast_rank1:02} by $\rm{W}^T$ yields
     \begin{equation*}\label{fast_rank1:03}
         \gamma_{km}^{n+1} = \rm{W}^T\left(\mathbf{A}^{-1}\mathbf{H}^{\mathit{n}}_{\mathit{km}}\right) - \mathbf{c}_{\mathit{km}} \gamma^{\mathit{n}}_{\mathit{km}}\rm{W}^T(\mathbf{A}^{-1}\mathbf{1}_{M})- \mathbf{c}_{\mathit{km}} \gamma^{\mathit{n}+1}_{\mathit{km}}\rm{W}^T(\mathbf{A}^{-1}\mathbf{1}_{M}).
     \end{equation*}
     Then we have
     \begin{equation}\label{fast_rank1:04}
         \gamma^{\mathit{n}+1}_{km} = \frac{ - \mathbf{c}_{km}\beta}{1 + \mathbf{c}_{km}\beta}\gamma_{km}^n + \frac{\alpha_{km}}{1 + \mathbf{c}_{km}\beta},
     \end{equation}
     where 
     \begin{equation*}
         \alpha_{km} = \rm{W}^T\left(\mathbf{A}^{-1}\mathbf{H}^{\mathit{n}}_{\mathit{km}}\right),\quad \beta = \rm{W}^T(\mathbf{A}^{-1}\mathbf{1}_{M}).
     \end{equation*}
     Once we solve $\gamma_{km}^{n+1}$ by the above equation, the solution  $\mathbf{\widehat{U}}_{km}^{n+1}$ immediately follows from  \eqref{fast_rank1:02}. 
     
     Finally, we update $s^{n+1}$ by 
\begin{equation}\label{update_s}
    \begin{aligned}
        s^{n+1} - s^n = \int_{\Omega}g(\mathcal{C}[\varphi_{NM}^{n+\frac{1}{2}}],s^{n+\frac{1}{2}})f(\mathcal{C}[\varphi_{NM}^{n+\frac{1}{2}}])\mathcal{C}[\varphi_{NM}^{n+1}-\varphi_{NM}^{n}]d\mathbf{x}.
    \end{aligned}
\end{equation}

     We summarize the fast rank-one algorithm in the following Algorithm \ref{fast algorithm_rank1}:

     \begin{algorithm}[htbp] 
	\caption{Fast rank-one algorithm for the scheme \eqref{ESAV_second_order_fully}}    
	 \label{fast algorithm_rank1}  
      \renewcommand{\algorithmicrequire}{\textbf{Input:}}
    \renewcommand{\algorithmicensure}{\textbf{Output:}}
	\begin{algorithmic}[1]
    	\Require $\mathbf{A},\, \mathbf{B},\mathbf{C},\mathbf{D},\,\mathbf{U}^{n},\mathbf{F}$, $s^n$, $s^{n-1}$ 
     \Ensure  $\mathbf{U}^{N_T+1}$ 
     \State  Calculate $\mathbf{A}^{-1}$,$\rm{W}^T$, $\beta = \rm{W}^T(\mathbf{A}^{-1}\mathbf{1}_{M})$
    \For {$n=0,\cdots,N_T$ }
    \State Compute $ \mathbf{H}^{n}_{km} = \left(\mathbf{D}\mathbf{\widehat{U}}^{n} + \mathbf{F}\right)_{km}$
        \State Compute $\alpha_{km} = \rm{W}^T\left(\mathbf{A}^{-1}\mathbf{H}^{\mathit{n}}_{\mathit{km}}\right)$
        \State Update $ \gamma^{\mathit{n}+1}_{km} = \frac{ - \mathbf{c}_{km}\beta}{1 + \mathbf{c}_{km}\beta}\gamma_{km}^n + \frac{\alpha_{km}}{1 + \mathbf{c}_{km}\beta}$
        \State Compute $\mathbf{\widehat{U}}_{km}^{n+1} = \mathbf{A}^{-1}\mathbf{H}^{n}_{km}- \mathbf{c}_{km}\gamma^{n+1}_{km}\mathbf{A}^{-1}\mathbf{1}_{M}- \mathbf{c}_{km}\gamma_{km}^{n}\mathbf{A}^{-1}\mathbf{1}_{M}$
        \State Recover $\mathbf{U}^{n+1}$ by the inverse FFT
        \State Update $s^{n+1}$ by scalar auxiliary equation \eqref{update_s}
 \EndFor   
        \Return $\mathbf{U}^{N_T+1}$
   \end{algorithmic} 
\end{algorithm}

\begin{remark}
The efficiency of the proposed fast rank-one algorithm originates from the
rank-one structure of the coupling matrix $\mathbf{B}$, which can be
expressed as the product of a column vector and a row vector. The rank-one structure is a direct consequence of the independent-mode decomposition guaranteed by Bernstein's theorem (Remark \ref{rmk:beyond_fractional}): all modes couple only through the single scalar $C[\varphi]$. 
The decomposition \eqref{fast_rank1:01}--\eqref{fast_rank1:04} is precisely the Sherman--Morrison formula \cite{sherman1950adjustment} applied to
the rank-one perturbation $\mathbf{A} + \mathbf{c}_{km}\mathbf{1}_M \mathrm{W}^T$.
\end{remark}

\begin{remark}
    It is observed that the proposed fast rank-one algorithm requires solving $M+1$ independent discrete Poisson-type equations and only involves \emph{sparse} matrix multiplications per time step. Importantly, the proposed fast rank-one algorithm avoids both characteristic decomposition and dense matrix multiplications. Consequently, for a fixed value of $M$, the computational cost and storage requirement of Algorithm \ref{fast algorithm_rank1} for the present scheme \eqref{ESAV_second_order_fully} are $\mathcal{O}\left(N_{T}N^{2}\log(N^{2})\right)$ and $\mathcal{O}(N^{2})$, respectively. 
\end{remark}

\subsection{Discrete structure-preserving properties}\label{sec:4}
In this subsection, we prove the discrete structure-preserving properties, i.e., the energy dissipation law and the mass conservation law, for the second-order fully discrete scheme. 

\begin{theorem}
    For $S\geq 0$, $\Delta t>0$, the  scheme \eqref{ESAV_second_order_fully} is unconditionally energy stable in the sense that the following discrete energy dissipation law holds:
\begin{equation}\label{Energy_dissipation_law_second_order}
\begin{aligned}
    \widehat{\mathcal{E}}(\varphi^{n+1}_{NM}) - \widehat{\mathcal{E}}(\varphi^{n}_{NM})\leq -\frac{1}{\Delta t}\left\Vert\mathcal{G}^{-1/2}(\varphi_{NM}^{n+1} - \varphi_{NM}^{n})\right\Vert^2_{L^{2}(\Omega;L^{2}_{\omega_{\alpha,0}}(I))},\quad n\geq 1,
\end{aligned}
\end{equation}
where $\widehat{\mathcal{E}}(\varphi^{n+1}_{NM}) =  \mathcal{E}(\varphi_{NM}^{n+1}) + \frac{S}{2}\left\Vert\mathcal{G}^{1/2}(\varphi_{NM}^{n+1}-\varphi_{NM}^{n})\right\Vert^2_{L^2(\Omega;L^2_{\omega_\alpha}(I))}$, and $\mathcal{E}(\varphi)$ is defined in \eqref{energy of AC}.
\end{theorem}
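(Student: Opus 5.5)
The plan is the standard discrete energy argument for ESAV schemes. I will take the test function $q_{NM}=\delta^{n+1}:=\varphi^{n+1}_{NM}-\varphi^{n}_{NM}\in\mathcal{M}(\Omega\times I)$ in \eqref{ESAV_second_order_fully} and turn each term into a telescoping difference, a nonnegative remainder, or the dissipation term. I also write $\delta^n:=\varphi^n_{NM}-\varphi^{n-1}_{NM}$.

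\textbf{Preliminary: meaning of $\mathcal{G}^{-1}$.} In the Cahn--Hilliard case $\mathcal{G}=-\Delta$, the operator $\mathcal{G}^{-1}$ is only defined on mean-zero functions. So the first step is to prove a discrete analogue of Theorem~\ref{mass_preserving}. I will take $q_{NM}=h_j(\theta)$, the constant-in-space test function, at each collocation node. This should give $\int_\Omega(\varphi^{n+1}_{NM}-\phi_0)\,d\mathbf{x}=0$ node by node, since the resulting recursion is homogeneous and starts from zero. Hence $\delta^{n+1}$ and $\varphi^{n+1}_{NM}-\phi_0$ have zero mean, and all $\mathcal{G}^{-1}$ and $\mathcal{G}^{\pm1/2}$ expressions below are well defined and symmetric positive.

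\textbf{The linear terms.}
\begin{itemize}
\item The first term gives exactly $\frac{1}{\Delta t}\Vert\mathcal{G}^{-1/2}\delta^{n+1}\Vert^2_{L^2(\Omega;L^2_{\omega_{\alpha,0}}(I))}$.
\item The $\omega_{\alpha,1}$ term has the form $\frac12(a+b-2\phi_0,a-b)$ with $a=\varphi^{n+1}_{NM}$ and $b=\varphi^{n}_{NM}$. It is therefore exactly $\frac12\Vert\mathcal{G}^{-1/2}(\varphi^{n+1}_{NM}-\phi_0)\Vert^2_{\omega_{\alpha,1}}-\frac12\Vert\mathcal{G}^{-1/2}(\varphi^{n}_{NM}-\phi_0)\Vert^2_{\omega_{\alpha,1}}$, i.e.\ the increment of the memory part of $\mathcal{E}$.
\item Since $\mathcal{C}$ is linear, the gradient term is $\frac{\varepsilon^2}{2}\big(\Vert\nabla\mathcal{C}[\varphi^{n+1}_{NM}]\Vert^2-\Vert\nabla\mathcal{C}[\varphi^{n}_{NM}]\Vert^2\big)$, moved to the left-hand side.
\item After moving the stabilization term to the left, it reads $S(\mathcal{G}(\delta^{n+1}-\delta^{n}),\delta^{n+1})_{\omega_\alpha}$. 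By $2(x-y,x)=|x|^2-|y|^2+|x-y|^2$ this equals $\frac S2\big(\Vert\mathcal{G}^{1/2}\delta^{n+1}\Vert^2_{\omega_\alpha}-\Vert\mathcal{G}^{1/2}\delta^{n}\Vert^2_{\omega_\alpha}\big)+\frac S2\Vert\mathcal{G}^{1/2}(\delta^{n+1}-\delta^n)\Vert^2_{\omega_\alpha}$.
\end{itemize}
The last remainder is nonnegative for $S\ge0$; dropping it is what produces the inequality in \eqref{Energy_dissipation_law_second_order} instead of an equality. This step needs $n\ge1$, so that $\varphi^{n-1}$ exists.

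\textbf{The nonlinear term.} With $q_{NM}=\delta^{n+1}$, the nonlinear term becomes $-\big(g(\mathcal{C}[\varphi^{n+1/2}_{NM}],s^{n+1/2})f(\mathcal{C}[\varphi^{n+1/2}_{NM}]),\mathcal{C}[\delta^{n+1}]\big)$. By the $s$-update this is exactly $-(s^{n+1}-s^n)$, so the nonlinear contribution telescopes with no condition on $\Delta t$. This is why stability is unconditional.

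\textbf{Main obstacle.} The nonlinear term telescopes in $s^n$, not in $E_1(\mathcal{C}[\varphi^n_{NM}])=\int_\Omega F(\mathcal{C}[\varphi^n_{NM}])\,d\mathbf{x}$. The energy in \eqref{Energy_dissipation_law_second_order} must therefore be read as the modified ESAV energy, with the bulk part $\int_\Omega F$ of $E$ replaced by the auxiliary variable $s^{n+1}$. This agrees with the continuous level, where $s(t)=E_1(\mathcal{C}[\varphi])$. I will state this identification explicitly. If one insists on the unmodified energy, no exact inequality is available in general, so this interpretation is essential.

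\textbf{Conclusion.} Summing all the identities, dropping the nonnegative stabilization remainder, and collecting the telescoping pieces into $\widehat{\mathcal{E}}$ gives \eqref{Energy_dissipation_law_second_order}. For Allen--Cahn ($\mathcal{G}=I$) the mean-zero preliminary is unnecessary and the argument is verbatim.
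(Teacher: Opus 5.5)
Your proposal is correct and follows the paper's own proof. You test with $\varphi^{n+1}_{NM}-\varphi^{n}_{NM}$, polarize the stabilization term, telescope the gradient and memory terms, and absorb the nonlinear term through the $s$-update. Your explicit reading of $\mathcal{E}$ with $s^{n}$ in place of $\int_\Omega F(\mathcal{C}[\varphi^{n}_{NM}])\,d\mathbf{x}$ is exactly what that computation proves; the paper leaves this implicit.

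One minor caveat concerns the node-by-node zero-mean preliminary for $\mathcal{G}=-\Delta$. Testing the $\mathcal{G}^{-1}$-form with spatial constants does not work: the $\mathcal{G}^{-1}$-terms yield no mass information, while $g\,\bigl(f(\mathcal{C}[\varphi^{n+\frac12}_{NM}]),\mathcal{C}[h_j]\bigr)_{L^2(\Omega)}$ does not vanish. Instead, integrate the collocation form of the scheme over $\Omega$, i.e.\ the form with $\mathcal{G}$ acting on the test function, as the paper does in its mass-conservation theorem.
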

\begin{proof}
    Taking $q_{NM} = \varphi_{NM}^{n+1} - \varphi_{NM}^{n}$ in \eqref{ESAV_second_order_fully}, we obtain
    \begin{equation*}
        \begin{aligned}
            &\frac{1}{\Delta t}\left(\mathcal{G}^{-1}(\varphi^{n+1}_{NM} - \varphi^{n}_{NM}),\varphi_{NM}^{n+1} - \varphi_{NM}^{n}\right)_{L^{2}(\Omega;L^{2}_{\omega_{\alpha,0}}(I))} \\
            &+ \frac{1}{2}\left(\mathcal{G}^{-1}(\varphi^{n}_{NM} + \varphi^{n+1}_{NM}-2\phi_{0}),\varphi_{NM}^{n+1} - \varphi_{NM}^{n}\right)_{L^{2}(\Omega;L^{2}_{\omega_{\alpha,1}}(I))} \\
        & = - \frac{\varepsilon^{2}}{2}\left(\nabla(\mathcal{C}[\varphi_{NM}^{n} + \varphi_{NM}^{n+1}]),\nabla(\mathcal{C}[\varphi_{NM}^{n+1}-\varphi_{NM}^{n}])\right)_{L^{2}(\Omega)} \\
        &- g\left(\mathcal{C}[\varphi_{NM}^{n+\frac{1}{2}}],s^{n+\frac{1}{2}}\right)\left(f(\mathcal{C}[\varphi_{NM}^{n+\frac{1}{2}}]),\mathcal{C}[\varphi_{NM}^{n+1}-\varphi_{NM}^{n}]\right)_{L^{2}(\Omega)}\\
        &- S\left(\mathcal{G}(\varphi_{NM}^{n+1}-2\varphi_{NM}^{n}+\varphi_{NM}^{n-1}),\varphi_{NM}^{n+1}-\varphi_{NM}^{n}\right)_{L^2(\Omega;L^2_{\omega_{\alpha}}(I))},\\
            &s^{n+1} -s^{n} = g\left(\mathcal{C}[\varphi_{NM}^{n+\frac{1}{2}}],s^{n+\frac{1}{2}}\right)\left(f(\mathcal{C}[\varphi^{n+\frac{1}{2}}_{NM}]), \mathcal{C}[\varphi^{n+1}_{NM} - \varphi^{n}_{NM}]\right)_{L^{2}(\Omega)}.
        \end{aligned}
    \end{equation*}
    Noting  the identity $2(a-2b+c)(a-b) = (a-b)^{2} - (b-c)^{2} + (a-2b+c)^{2}$, then we obtain the following equation by combining the above equation:
    \begin{equation*}
        \begin{aligned}
             &\left( \mathcal{E}(\varphi_{NM}^{n+1}) + \frac{S}{2}\left\Vert\mathcal{G}^{1/2}(\varphi_{NM}^{n+1}-\varphi_{NM}^{n})\right\Vert^2_{L^2(\Omega;L^2_{\omega_\alpha}(I))} \right)\\
             &- \left(\mathcal{E}(\varphi_{NM}^{n}) + \frac{S}{2}\left\Vert\mathcal{G}^{1/2}(\varphi_{NM}^{n}-\varphi_{NM}^{n-1})\right\Vert^2_{L^2(\Omega;L^2_{\omega_\alpha}(I))}\right)\\   
                &= -\frac{1}{\Delta t}\left\Vert\mathcal{G}^{-1/2}(\varphi_{NM}^{n+1} - \varphi_{NM}^{n})\right\Vert^2_{L^{2}(\Omega;L^{2}_{\omega_{\alpha,0}}(I))}\\
                &- \frac{S}{2}\left\Vert\mathcal{G}^{1/2}(\varphi_{NM}^{n+1}-2\varphi_{NM}^{n}+\varphi_{NM}^{n-1})\right\Vert^2_{L^{2}(\Omega;L^{2}_{\omega_{\alpha}}(I))},
        \end{aligned}
    \end{equation*}
    which implies the desired result \eqref{Energy_dissipation_law_second_order} after neglecting the non-positive term.
\end{proof}

\begin{theorem}
    For $S\geq0$, $\Delta t>0$, assume that $\varphi_{NM}^{n+1}$ is the solution of the second-order scheme \eqref{ESAV_second_order_fully}, then we have the following discrete global mass-conservation and pointwise (in $\theta$) mass-conservation: 
    \begin{equation}\label{Mass_result:second_order}
    \begin{aligned}
       &\int_\Omega\mathcal{C}[\varphi_{NM}^{n+1}](\mathbf{x})d\mathbf{x} = \int_\Omega\mathcal{C}[\varphi_{NM}^{n}](\mathbf{x})d\mathbf{x},\; n = 0,\cdots,N_T-1,\\
       &\int_{\Omega}\varphi_{NM}^{n+1}(\mathbf{x},\theta)d\mathbf{x} = \int_{\Omega}\varphi_{NM}^{n}(\mathbf{x},\theta)d\mathbf{x},\; \theta\in(0,1),\; n = 0,\cdots,N_T-1.
    \end{aligned}
    \end{equation} 
\end{theorem}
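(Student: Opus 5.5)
The plan is to mimic the continuous argument of Theorem~\ref{mass_preserving} at the discrete level. The key tool is the Fourier spectral discretization: the zero Fourier mode $k=m=0$ carries the spatial mean. For $\mathcal{G}=-\Delta$ it is convenient to work with the strong (nodal/Fourier) form of \eqref{ESAV_second_order_fully}, i.e. the equation multiplied back by $\mathcal{G}$. Equivalently, we use the matrix form \eqref{fast_rank1:01} restricted to the $(0,0)$ mode.

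At each collocation point $\theta_j$, the scheme reads
\[
(1-\theta_j)\frac{\varphi^{n+1}_{NM}-\varphi^{n}_{NM}}{\Delta t}+\frac{\theta_j}{2}\bigl(\varphi^{n+1}_{NM}+\varphi^{n}_{NM}-2\phi_0\bigr)=-\mathcal{G}\bigl(\mu^{n+1/2}+S(\varphi^{n+1}_{NM}-2\varphi^n_{NM}+\varphi^{n-1}_{NM})\bigr),
\]
where $\mu^{n+1/2}$ collects the $\varepsilon^2$ and ESAV nonlinear terms. We integrate this over $\Omega$, i.e. take $q_{NM}=h_j(\theta)$ times the constant function. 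Because $\mathcal{G}=-\Delta$ annihilates the zero mode under periodic boundary conditions, the whole right-hand side vanishes. Setting $\chi^n_j=\int_\Omega(\varphi^n_{NM}(\cdot,\theta_j)-\phi_0)\,d\mathbf{x}$, we obtain the scalar recursion
\[
(1-\theta_j)(\chi^{n+1}_j-\chi^n_j)+\tfrac{\Delta t}{2}\theta_j(\chi^{n+1}_j+\chi^n_j)=0.
\]
Since $\varphi^0_{NM}=\phi_0$ (more precisely its interpolant, so that $\chi^0_j=0$), induction gives $\chi^n_j=0$ for all $n$. The coefficient $1-\theta_j+\tfrac{\Delta t}{2}\theta_j>0$ for $\theta_j\in(0,1)$, so the recursion is uniquely solvable. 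The first step $n=0$ needs $\varphi^{-1}$; we will take $\varphi^{-1}=\varphi^0$ or a first-order start, which preserves the means by the same argument applied to Scheme I.

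This yields the pointwise conservation at every node $\theta_j$. Since $\varphi^{n}_{NM}(\cdot,\theta)=\sum_j\varphi^n_{NM}(\cdot,\theta_j)h_j(\theta)$ and $\sum_j h_j\equiv1$, it extends to all $\theta\in(0,1)$. The global conservation then follows by applying the Gauss--Jacobi quadrature $\mathcal{C}[\cdot]=\sum_j\omega_j(\cdot)(\theta_j)$ and using $\sum_j\omega_j=1$.

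The main obstacle is justifying that testing with spatial constants is legitimate in the weak form containing $\mathcal{G}^{-1}$, since $(-\Delta)^{-1}$ is only defined on mean-zero functions. We will handle this by arguing on the strong/Fourier form, where the $(0,0)$ component of the equation is directly the recursion above. This is exactly row $k=m=0$ of \eqref{fast_rank1:01} with $\mathbf{c}_{00}=0$ and the nonlinear and stabilization contributions carrying a factor of the Laplacian symbol.
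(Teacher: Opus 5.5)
Your proof is correct and follows essentially the same route as the paper: testing with spatial constants kills the $\mathcal{G}=-\Delta$ right-hand side, the resulting scalar recursion for $\int_\Omega(\varphi^{n}_{NM}-\phi_0)\,d\mathbf{x}$ is solved by induction from zero for each $\theta$, and integrating against $\omega_\alpha$ (your Gauss--Jacobi quadrature with $\sum_j\omega_j=1$) gives the global identity; your strong-form workaround for the domain of $\mathcal{G}^{-1}$ and your extension from the nodes $\theta_j$ to all $\theta$ via $\sum_j h_j\equiv 1$ are careful points the paper passes over. One small correction: the matrix system \eqref{fast_rank1:01} as displayed in the paper is the $\mathcal{G}=I$ version (its $S\Delta t$ and $\widehat{f}$ terms carry no Laplacian symbol, so its $(0,0)$ row does not reduce to your recursion), and in the Cahn--Hilliard strong form the stabilization term is $-S\mathcal{G}^2(\varphi^{n+1}_{NM}-2\varphi^n_{NM}+\varphi^{n-1}_{NM})$ rather than $-S\mathcal{G}(\cdots)$ --- neither slip affects the argument, since you only use that the right-hand side lies in the range of $\mathcal{G}$.
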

\begin{proof}

It follows from \eqref{ESAV_second_order_fully} that:
\begin{equation*}
    \begin{aligned}
         &\frac{1}{\Delta t}\left(\varphi_{NM}^{n+1} - \varphi_{NM}^{n},q_{NM}\right)_{L^2(\Omega)} + \frac{c_1(\theta)}{2}\left(\varphi_{NM}^{n+1}+\varphi_{NM}^{n} - 2\phi_{0},q_{NM}\right)_{L^2(\Omega)}\\
    &=c_0(\theta) \left(-\frac{\varepsilon^{2}}{2}(-\Delta) \left(\mathcal{C}[\varphi_{NM}^{n+1}+\varphi_{NM}^{n}]\right), (-\Delta) q_{NM}\right)_{L^2(\Omega)}\\
    & +c_0(\theta)\left( g(\mathcal{C}[\varphi_{NM}^{n+\frac{1}{2}}],s^{n+\frac{1}{2}})f(\mathcal{C}[\varphi^{n+\frac{1}{2}}_{NM}]),(-\Delta) q_{NM}\right)_{L^2(\Omega)}\\
&- S c_0(\theta)\left((-\Delta)(\varphi_{NM}^{n+1}-2\varphi_{NM}^{n}+\varphi_{NM}^{n-1}),(-\Delta)q_{NM}\right)_{L^2(\Omega)}.
    \end{aligned}
\end{equation*}
    Taking $q_{NM} = 1$ in the above equation, since $(-\Delta)q_{NM} = 0$ for $q_{NM} = 1$, then we obtain
\begin{equation}\label{mass:05}
         \frac{1}{\Delta t}\left(\varphi^{n+1}_{NM} - \varphi^n_{NM},1\right)_{L^2(\Omega)} + \frac{ c_1(\theta)}{2}\left(\varphi^{n+1}_{NM} + \varphi^{n}_{NM} - 2\phi_0,1\right)_{L^2(\Omega)} = 0.
    \end{equation}
    We prove $\left(\varphi_{NM}^{n+1},1\right)_{L^2(\Omega)}= \left(\phi_0,1\right)_{L^2(\Omega)}$ by using mathematical induction method. In fact, for $n = 0$, we obtain
    \begin{equation*}
        \left(\frac{1}{\Delta t} + \frac{c_1(\theta)}{2}\right)\left(\varphi_{NM}^{1} - \phi_0,1\right)_{L^2(\Omega)}= 0.
    \end{equation*}
    We assume $\left(\varphi_{NM}^{s},1\right) _{L^2(\Omega)} = \left( \phi_0,1\right)_{L^2(\Omega)}$, $\forall~ 1\leq s\leq n$, then we obtain for $s = n+1$ 
    \begin{equation*}
    \begin{aligned}
         \left(\frac{1}{\Delta t} + \frac{c_1(\theta)}{2}\right)\left(\varphi_{NM}^{n+1},1\right) _{L^2(\Omega)}&= \left(\frac{1}{\Delta t} - \frac{c_1(\theta)}{2}\right)\left(\varphi_{NM}^{n},1\right) _{L^2(\Omega)}+ c_1(\theta)\left(\phi_0,1\right) _{L^2(\Omega)}\\
         &= \left(\frac{1}{\Delta t} + \frac{c_1(\theta)}{2}\right)\left(\phi_0,1\right) _{L^2(\Omega)},
    \end{aligned}
    \end{equation*}
    that is 
    \begin{equation}\label{mass:06}
        \left(\frac{1}{\Delta t} + \frac{c_1(\theta)}{2}\right)\left(\varphi_{NM}^{n+1}-\phi_0,1\right) _{L^2(\Omega)}=0.
    \end{equation}
    Taking the $L^2_{\omega_{\alpha}}(I)$ inner product of \eqref{mass:05} yields
        \begin{equation}\label{mass:07}
        \frac{1}{\Delta t}\left(\varphi^{n+1}_{NM}-\varphi^{n}_{NM},1\right)_{L^2(\Omega;L^2_{\omega_{\alpha}}(I))}+ \left(\frac{c_1(\theta)}{2}(\varphi^{n+1}_{NM} + \varphi_{NM}^{n} - 2\phi_0),1\right)_{L^2(\Omega ;L^2_{\omega_{\alpha}}(I))} = 0.
    \end{equation}   
    Thus, the mass-conservation \eqref{Mass_result:second_order} follows from \eqref{mass:05}-\eqref{mass:07}.
\end{proof}

\section{Numerical results}\label{sec:6}
In this section, we present numerical experiments for both the Allen--Cahn case ($\mathcal{G} = I$) and the Cahn--Hilliard case ($\mathcal{G} = -\Delta$). Examples 1--3 verify the structure-preserving
properties of the numerical schemes for both cases. Example~4 demonstrates
the coalescence dynamics of the Cahn--Hilliard case, while Example~5 illustrates the physical
effect of memory on coarsening dynamics for the Allen--Cahn case, directly connecting to the
friction mechanism derived in \S\ref{sec:derivation}.


%
\begin{example}[\textbf{Accuracy test}]\label{example1} 
In this example, we consider the following equation:
\begin{equation*}
    \frac{\partial ^{\alpha}\phi}{\partial t^{\alpha}}(x_1,x_2,t) = -\mathcal{G}\left[-
     \varepsilon^{2} \Delta \phi(x_1,x_2,t) + f(\phi(x_1,x_2,t))\right] + g(x_1,x_2,t).
\end{equation*}
In this example, we set   $\varepsilon = 0.1$, $T = 1$, $\Omega = [0,2\pi)^2$. In order to test the accuracy of temporal and the internal-variable $\theta$, the following two cases are considered:
\begin{itemize}
    \item Case I: The exact solution $\phi(x_1,x_2,t) = 0.02t^2 \cos(x_1)\cos(x_2)$;
    \item  Case II: The source term $g(x_1,x_2,t) = 0$, the initial value $\phi_{0}(x_1,x_2) = 0.02\cos(x_1)\cos(x_2)$.
\end{itemize}
\end{example}
\begin{figure}[htbp]
	\centering
    \subfigure[Temporal convergence results\label{1a}] 
	{
		\begin{minipage}{0.35\linewidth}
			\centering
			\includegraphics[width=\linewidth]{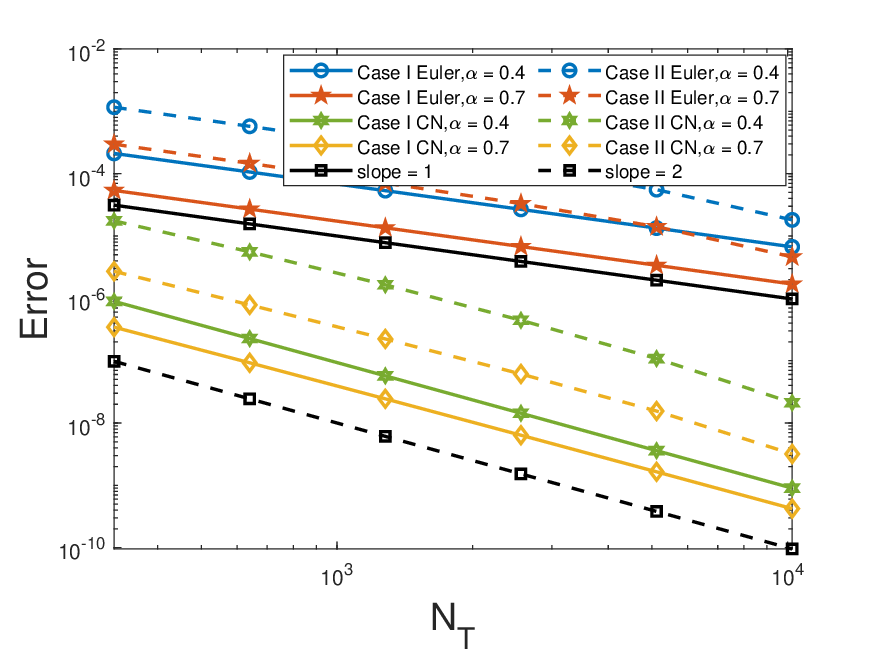}
		\end{minipage}
	}
    \subfigure[$\theta$ convergence results\label{1b}] 
	{
		\begin{minipage}{0.35\linewidth}
			\centering
			\includegraphics[width=\linewidth]{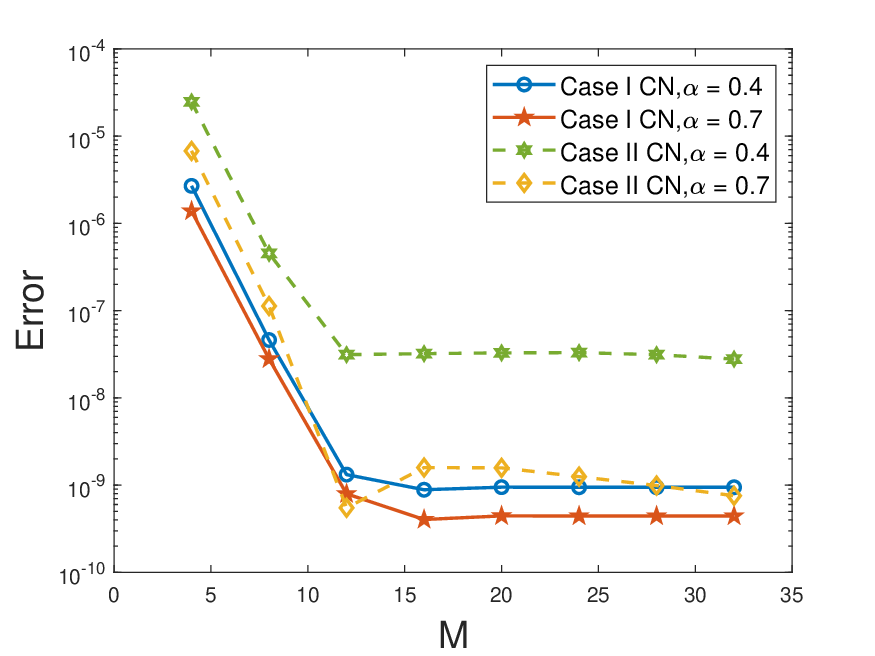}
		\end{minipage}
	}
	
 \caption{Example \ref{example1}. \ref{1a}: The maximum error vs. the time-nodes $N_{T}$ produced by the proposed first-order scheme \eqref{ESAV_first_order_fully} and second-order scheme \eqref{ESAV_second_order_fully} for Case I and Case II with the different values of $\alpha$. \ref{1b}:The maximum error vs. the Gauss nodes $M$ produced by the proposed second-order scheme \eqref{ESAV_second_order_fully} for Case I and Case II with  $\alpha = 0.3,\, 0.7$ and $\Delta t = 10^{-4}$.} 
\end{figure}

In this example, we present only the convergence results for $\mathcal{G} = I$ for simplicity, as similar results for $\mathcal{G} = -\Delta$ can be obtained and are omitted here. In Case II, we use the numerical solution computed with a sufficiently small time-step as the reference solution. We first investigate the temporal accuracy for the different values of fractional order $\alpha$. To this end, we set $M = 30$ for $\theta $ direction and $N = 128$ for the spatial discretization. The convergence results of the time-discretization for both Case I and Case II with the different values of fractional order $\alpha$ at $T = 1$ are shown in Figure \ref{1a}, which confirms that the expected convergence orders are obtained.

Then, we demonstrate the spectral accuracy of the approximation for the internal variable $\theta$. Here, $\Delta t$ is set to a sufficiently small value. Figure \ref{1b} shows  the accuracy with respect to $M$ in a semi-log scale for the different values of fractional order $\alpha = 0.3,\, 0.7$ at time $T = 1$.

It can be observed that \emph{exponential convergence} rates are achieved in all tests for the internal-variable $\theta$. This finding verifies the regularity result established in \cite[Theorem 5]{DingMao2024}, which states that the solution possesses sufficiently high regularity with respect to $\theta$ to guarantee the spectral accuracy. This indicates that we can fix \emph{a small number} of nodes (i.e., a small value of $M$) for the spectral discretization. Consequently, by employing the fast rank-one algorithm (see Algorithm \ref{fast algorithm_rank1}), we only need to solve \emph{a few independent discrete Poisson equations, which only involve sparse matrix multiplications}.
\begin{figure}[htbp]
	\centering 
	{
		\begin{minipage}{0.35\linewidth}
			\centering
			\includegraphics[width=\linewidth]{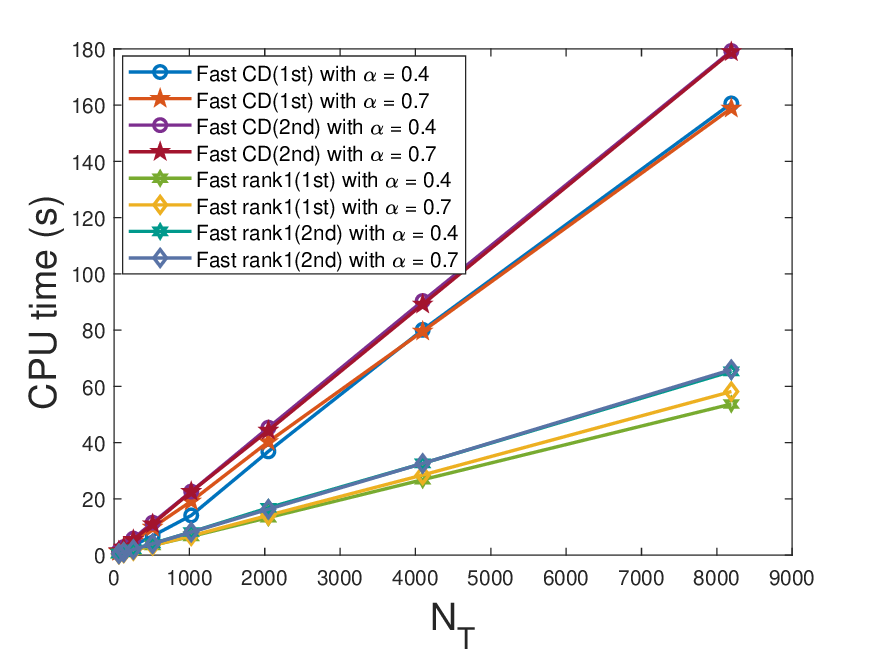}
		\end{minipage}
	}
	
 \caption{ Example \ref{example1}.  The CPU times of our fast rank-one algorithm ( Algorithm \ref{fast algorithm_rank1}) and the proposed fast algorithm in \cite{DingMaoLiu} with the different values of the fractional order $\alpha$.}
 \label{Efficiency_Fast_rank1}
\end{figure}

To demonstrate the computational efficiency of the proposed algorithm (Algorithm \ref{fast algorithm_rank1}), we compare in Figure \ref{Efficiency_Fast_rank1} the computation time  of the present fast rank-one algorithm with those of a fast algorithm based on generalized characteristic decomposition techniques from our previous work \cite{DingMaoLiu}. For Case I, we set $N = 64$, $M = 30, \,\varepsilon = 0.5$. As observed in Figure \ref{Efficiency_Fast_rank1},  the CPU time of the proposed fast rank-one algorithm is significantly lower than that of the fast characteristic decomposition  algorithm, demonstrating its superior computational efficiency. This is because our algorithm exploits the rank-one structure of matrix $\mathbf{B}$ instead of using characteristic decomposition. Consequently, it only requires solving $M+1$ Poisson-type equations and involves only \emph{sparse} matrix multiplications. Therefore, the computational cost and storage requirements of our fast rank-one algorithm ( Algorithm \ref{fast algorithm_rank1}) are $\mathcal{O}(N_TN^2\log(N^2))$ and $\mathcal{O}(N^2)$ for fixed $M$, respectively.

\begin{figure}[htbp]
	\centering
 \subfigure[$\mathcal{G}=I$\label{5b}]
	{
		\begin{minipage}{0.35\linewidth}
			\centering    
			\includegraphics[width=\linewidth]{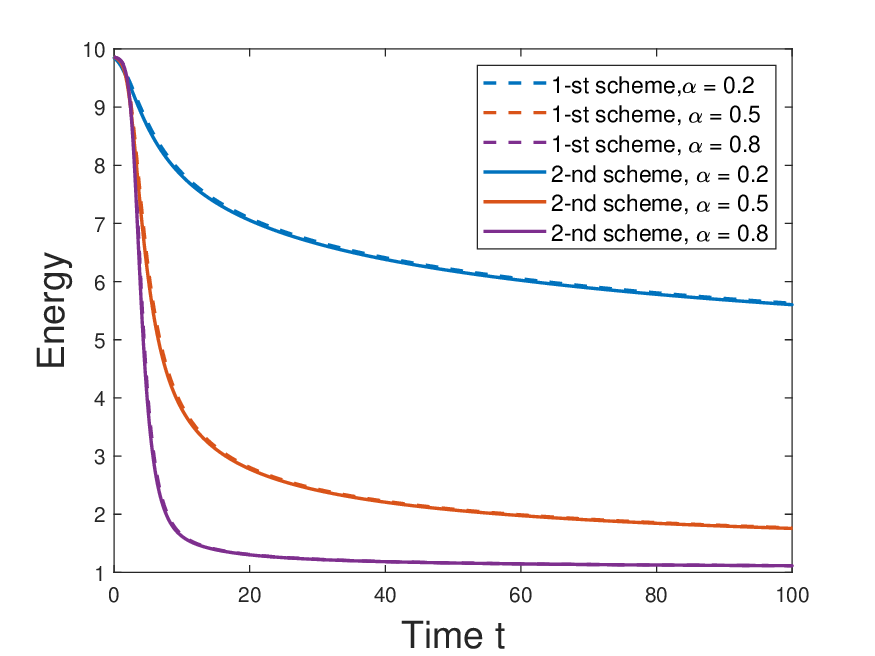} 
		\end{minipage}
	}
    \subfigure[$\mathcal{G} = -\Delta$\label{5c}] 
	{
		\begin{minipage}{0.35\linewidth}
			\centering
			\includegraphics[width=\linewidth]{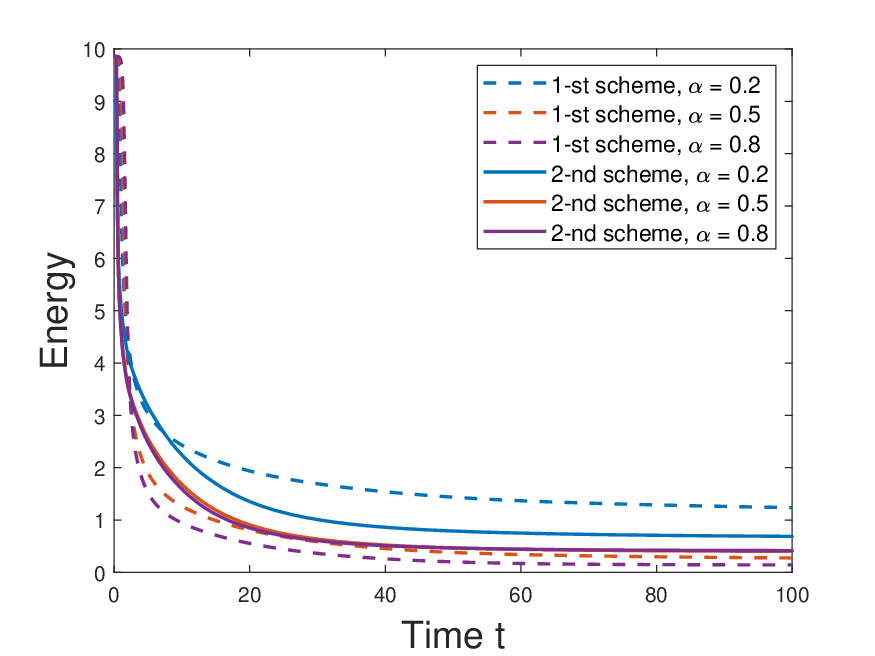}
		\end{minipage}
	}

 \caption{Example \ref{example2}. Energy evolution of the simulated solution for $\mathcal{G}=I$ (left) and $\mathcal{G}=-\Delta$ (right)  with the different values of fractional order $\alpha$ and $\Delta t = 0.01$. } 
	 \label{Energy_dissipation_CN}
\end{figure}
\begin{example}[\textbf{Energy dissipation law}]\label{example2} In this example, we consider \eqref{EPDEs} with $\varepsilon = 0.01$. We take $N = 128$ and $M = 30$. We set the initial value by random numbers in $[-0.04,0.04]$ and time step size $\Delta t = 0.01$, stabilization parameter $S = 2$.  
\end{example}

The time evolutions of the energy for the first- and second-order schemes for both $\mathcal{G} = I$ and $\mathcal{G} = -\Delta$ with the different values of $\alpha$ are presented in  Figure \ref{Energy_dissipation_CN}. We observe that the energy  decreases at all times for the different values of $\alpha$, which is consistent with our theoretical result (i.e., Theorem \ref{energy dissipation for AC}).
\begin{example}[\textbf{Mass-conservation}]\label{example3} In this example, we investigate the mass-conservation for the Cahn--Hilliard case with the following initial value
 \begin{equation*}
     \phi_0(x_1,x_2) = \cos(x_1)\cos(x_2).
 \end{equation*}
\end{example}
We take $T = 100$, $N = 128$, $S = 2$, $M = 30$ and $\varepsilon = 0.1$, 
and define the following global mass error and pointwise mass error between the numerical solution $\varphi^{n+1}_{NM}$ and the initial value $\phi_0$:
\begin{equation*}
\begin{aligned}
     &GME(t_n) = \int_{\Omega}\mathcal{C}[\varphi^{n}_{NM} - \phi_0](\mathbf{x}) d\mathbf{x},\quad 0\leq n\leq N_{T},\\
     & PME(t_n,\theta_s) = \int_{\Omega}\left(\varphi^{n}_{NM}(\mathbf{x},\theta_s) - \phi_0(\mathbf{x})\right) d\mathbf{x},\quad 0\leq n\leq N_{T}, \theta_s\in (0,1).
     \end{aligned}
\end{equation*}

We present the plots of the global mass error and the pointwise mass error for the first- and the second-order schemes with $\alpha = 0.2,\,0.8$ in Figure \ref{2D}. We observe in Figure \ref{2D} that the global mass error (GME) and the pointwise mass error (PME) are reduced to the machine precision ($\mathcal{O}(10^{-16})$) for both the first- and the second-order schemes, which is consistent with Theorem \ref{mass_preserving}. 

To demonstrate the pointwise mass conservation with respect to $\theta$, we present a 3D pointwise mass error surface plot of the second-order scheme with $\alpha = 0.5$ in Figure \ref{3D}.  We observe in Figure \ref{3D} that the  pointwise mass error (PME) can reach $\mathcal{O}(10^{-16})$ for each $\theta\in (0,1)$. This demonstrates that our numerical method maintains not only global mass conservation but also pointwise mass conservation. The latter provides a stronger and more intrinsic constraint than global mass conservation for developing numerical schemes with higher fidelity.

\begin{figure}[htbp]
	\centering
 \subfigure[2D\label{2D}]	{ {
		\begin{minipage}{0.45\linewidth}
			\centering
			\includegraphics[width=\linewidth]{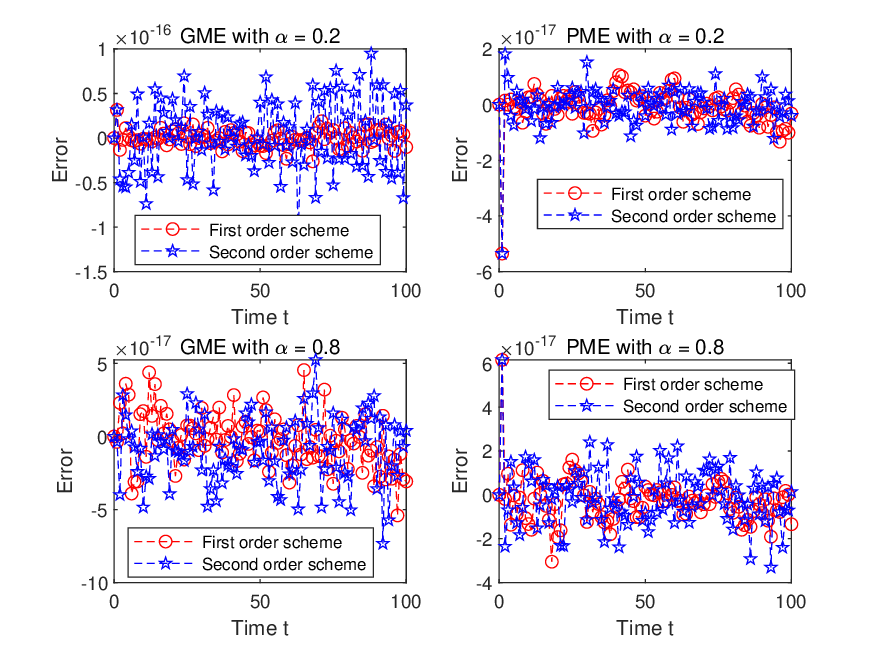}
		\end{minipage}
	}
    }
    \subfigure[3D\label{3D}]{   
    
    {
		\begin{minipage}{0.45\linewidth}
			\centering
			\includegraphics[width=\linewidth]{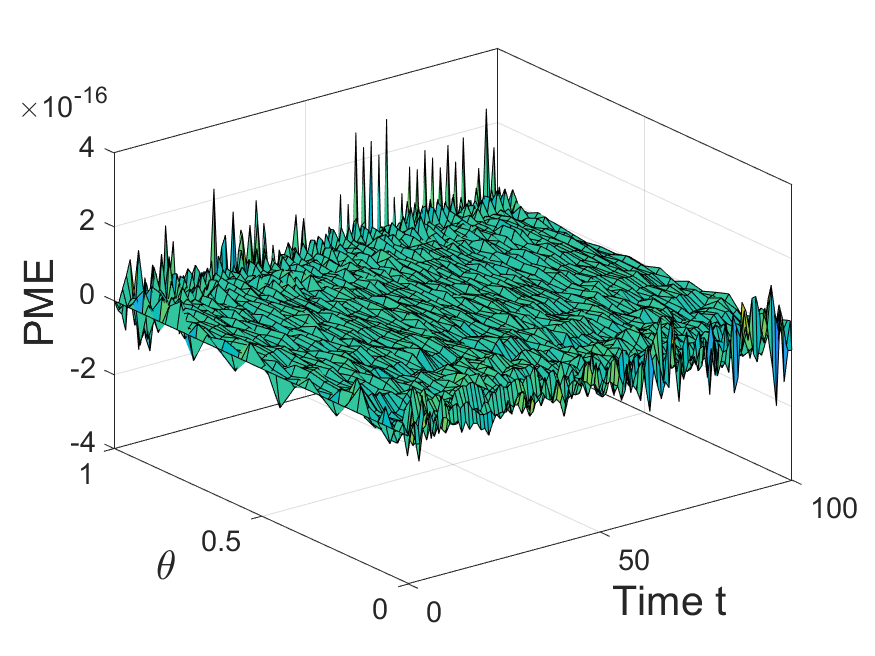}
		\end{minipage}
	}    
    }
    \caption{Example \ref{example3}. \ref{2D}:The global mass error (left) and pointwise mass error (right) for both the first- and second-order schemes with $\alpha = 0.2,\,0.8$. Here, we take the 20th layer along the $\theta$ direction of the numerical solution $\varphi_{NM}^n(\mathbf{x},\theta)$ to compute PMC. \ref{3D}:The 3D pointwise mass error surface plot of second-order scheme with $\alpha = 0.5$.} 
\end{figure}

\begin{example}[\textbf{Two kissing bubbles}]\label{example4}
    We simulate the coalescence of two bubbles, where the initial value is given by
    \begin{equation*}
    \begin{aligned}
        \phi_{0}(x_1,x_2) &= \tanh\left(\frac{R_{1}^2}{\varepsilon^2} - \frac{1}{\varepsilon^2}\left((x_1 - a_{1})^2 + (x_2 - b_{1})^2\right)\right)\\
        &+ \tanh\left(\frac{R_{2}^2}{\varepsilon^2} - \frac{1}{\varepsilon^2}\left((x_1 - a_{2})^2 + (x_2 - b_{2})^2\right)\right) + 1.
    \end{aligned}   
    \end{equation*}
    We choose $a_{1}= b_1 = \pi -1$, $a_2 = b_2 = \pi+1$, $R_1 = R_2 = 1$, $\varepsilon = 0.05$.                         
\end{example}
\begin{figure}[htbp]
	\centering
	{
		\begin{minipage}{0.2\linewidth}
			\centering
			\includegraphics[width=\linewidth]{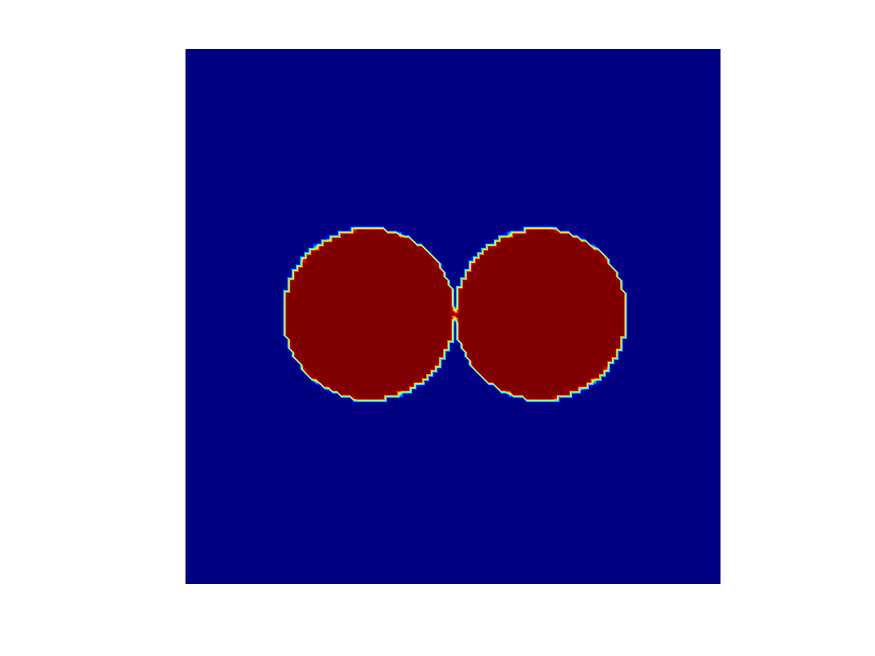}
		\end{minipage}
	}
	{
		\begin{minipage}{0.2\linewidth}
			\centering
			\includegraphics[width=\linewidth]{pic/Two/alpha9/0.eps}
		\end{minipage}
  }
	{
		\begin{minipage}{0.2\linewidth}
			\centering
			\includegraphics[width=\linewidth]{pic/Two/alpha9/0.eps}
		\end{minipage}
  }
  {
		\begin{minipage}{0.2\linewidth}
			\centering
			\includegraphics[width=\linewidth]{pic/Two/alpha9/0.eps}
		\end{minipage}
  }
  
  {
		\begin{minipage}{0.2\linewidth}
			\centering
			\includegraphics[width=\linewidth]{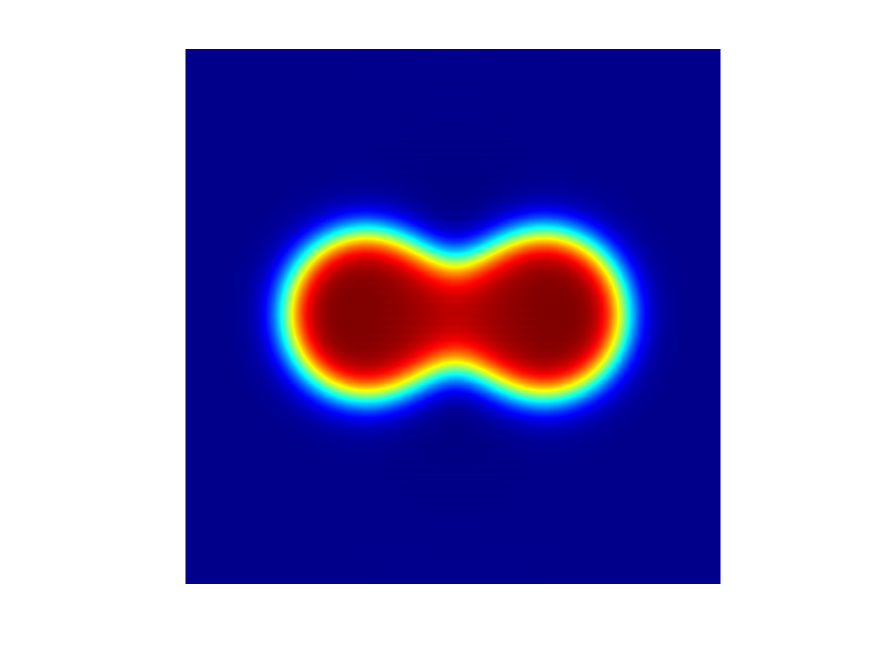}
		\end{minipage}
	}
	{
		\begin{minipage}{0.2\linewidth}
			\centering
			\includegraphics[width=\linewidth]{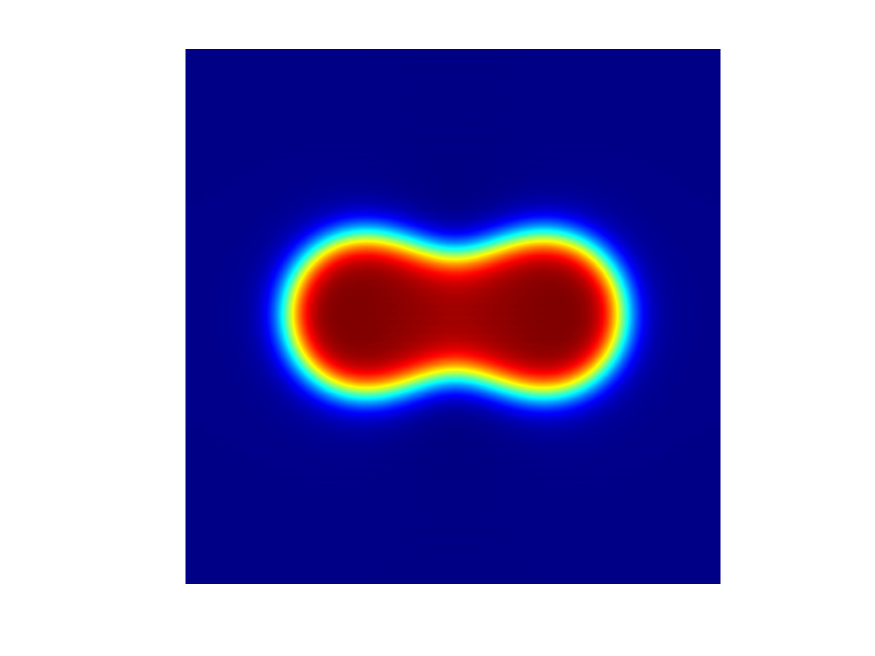}
		\end{minipage}
  }
	{
		\begin{minipage}{0.2\linewidth}
			\centering
			\includegraphics[width=\linewidth]{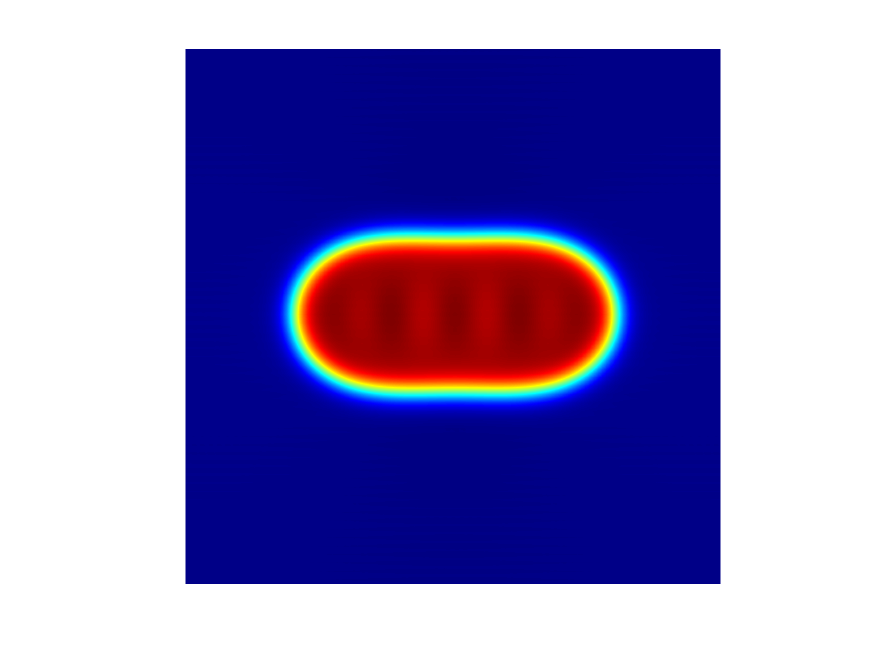}
		\end{minipage}
  }
  {
		\begin{minipage}{0.2\linewidth}
			\centering
			\includegraphics[width=\linewidth]{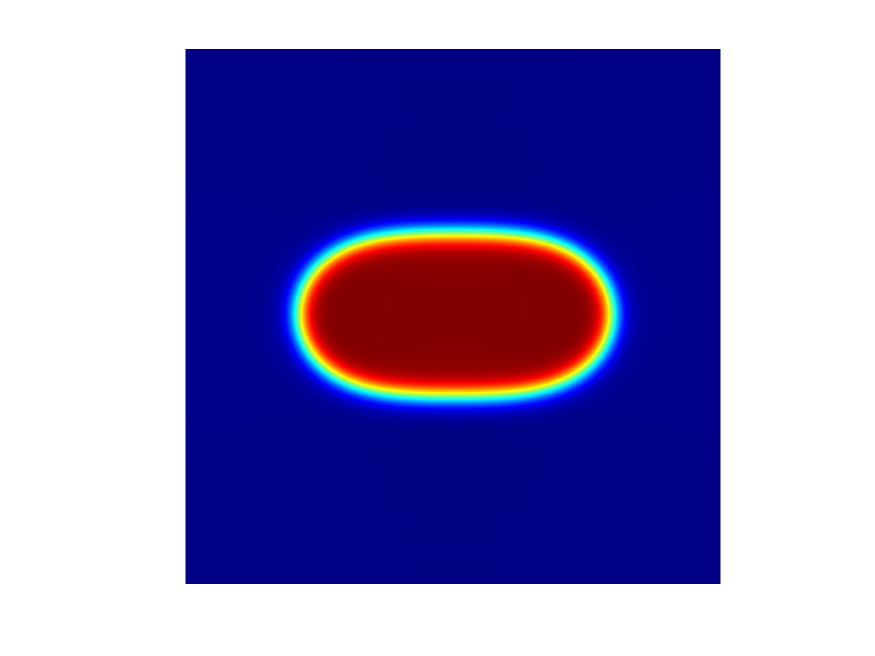}
		\end{minipage}
  }

   \subfigure[ $\alpha = 0.3$]
{
		\begin{minipage}{0.2\linewidth}
			\centering
			\includegraphics[width=\linewidth]{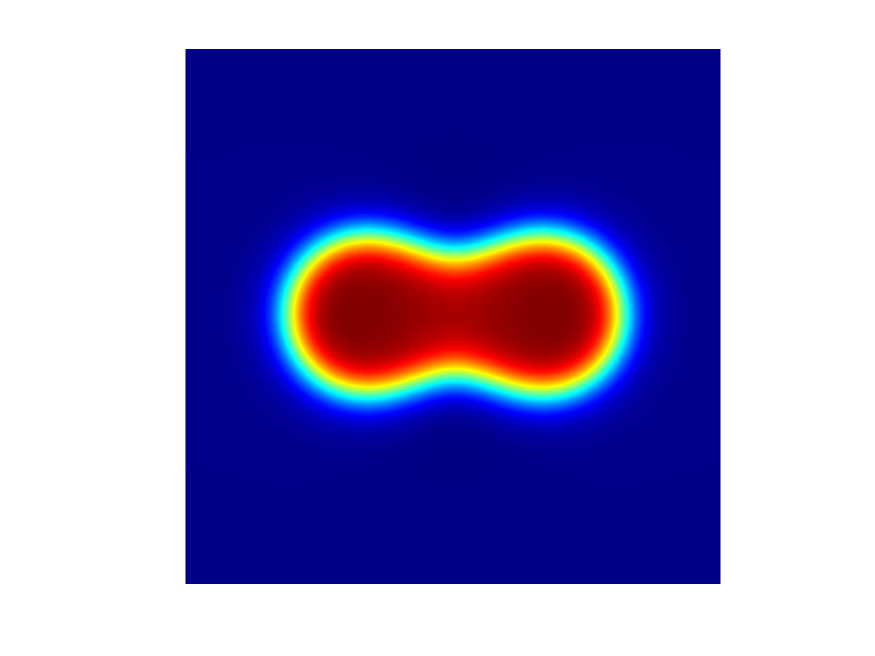}
		\end{minipage}
	}
    \subfigure[ $\alpha = 0.6$]
	{
		\begin{minipage}{0.2\linewidth}
			\centering
			\includegraphics[width=\linewidth]{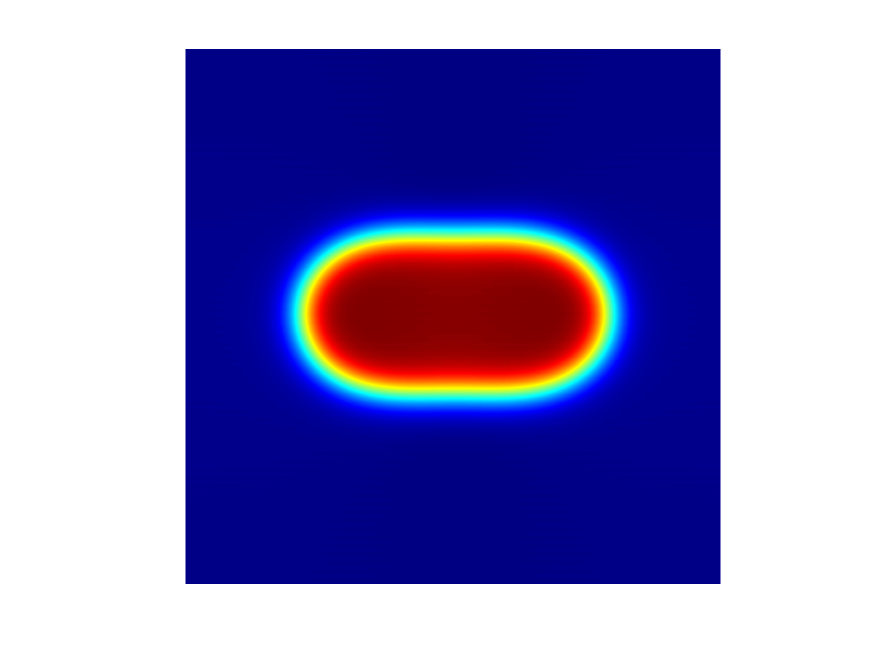}
		\end{minipage}
  }
  \subfigure[ $\alpha = 0.9$]
	{
		\begin{minipage}{0.2\linewidth}
			\centering
			\includegraphics[width=\linewidth]{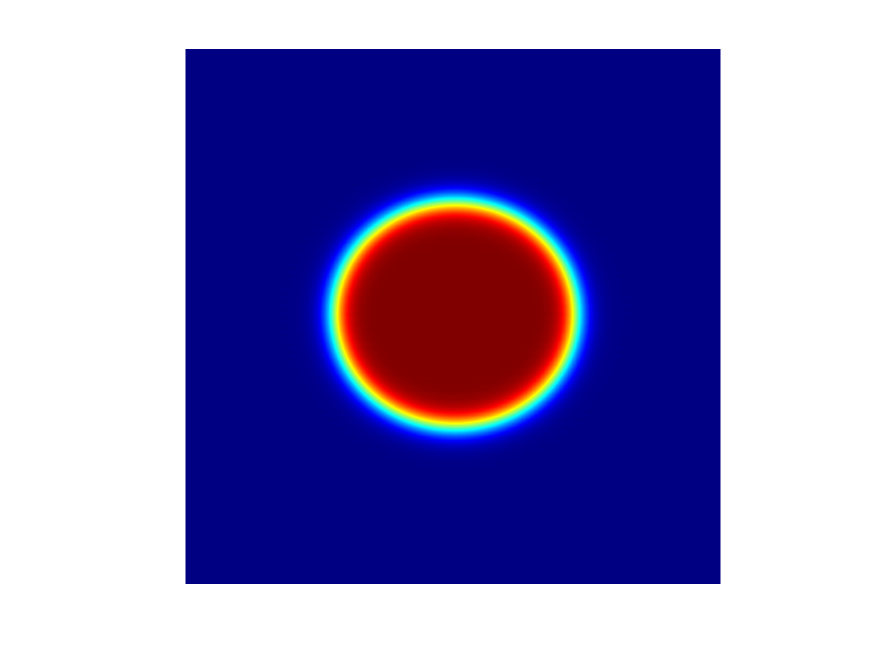}
		\end{minipage}
  }
  \subfigure[ $\alpha = 0.9999$\label{single_bubble2}]
	{
		\begin{minipage}{0.2\linewidth}
			\centering
			\includegraphics[width=\linewidth]{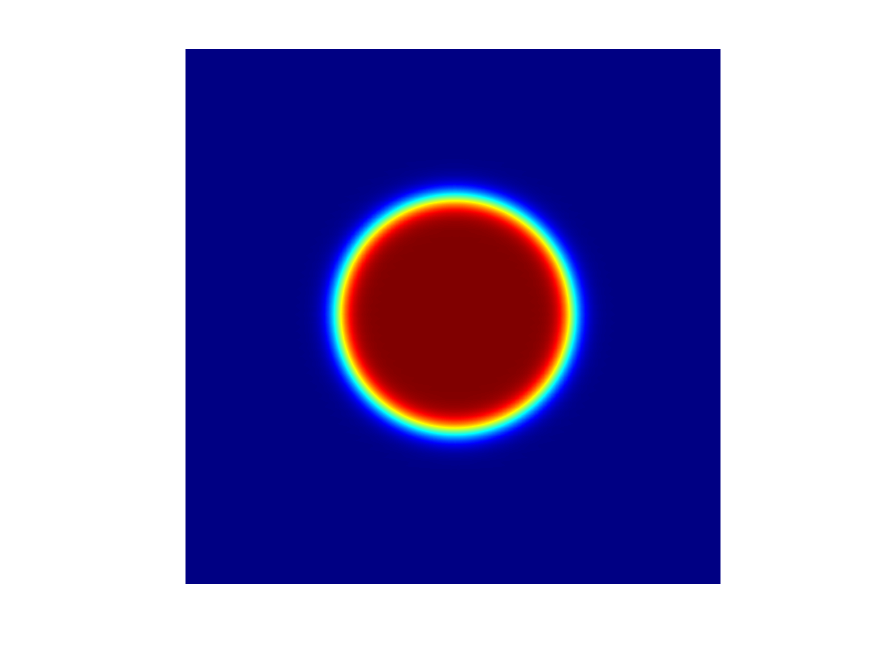}
		\end{minipage}
  }

	\caption{Example \ref{example4}. Evolution of the solution with $\mathcal{G} = -\Delta$ for the different values of fractional order $\alpha$. From top to bottom: $t = 0,10,200$.}
 \label{twobubbles_CH}
\end{figure}

 In order to illustrate the effect of changing the value of $\alpha$ on the character of the solution of \eqref{EPDEs}, we present the snapshots of the solution for different values of $\alpha$ at different time instances in Figure \ref{twobubbles_CH}. The numerical solution for the scheme \eqref{ESAV_first_order_fully} is obtained by using the Fourier-Galerkin spectral method  with $N = 128$, the Jacobi-collocation spectral method with $M = 30$ and the time-step size $\Delta t = 0.1$.

 We observe in Figure \ref{twobubbles_CH} that as time evolves, the two bubbles coalesce into a single bubble. Additionally, as $\alpha \rightarrow 1$, the classical Cahn--Hilliard equation preserves mass, and we see that the bubble reaches a steady circular shape (see Figure \ref{single_bubble2}).   More interestingly, the rate at which the energy decreases varies with  $\alpha$, i.e., the energy decays faster with increasing values of  $\alpha$, which is consistent with the energy dissipation law (see Figure \ref{Energy_dissipation_CN}).

 \begin{example}[\textbf{Coarsening dynamics}]\label{example5}
    In this example, we simulate the coarsening dynamics for the Allen-Cahn case with $\mathcal{G} = I$ to illustrate the physical effect of the memory kernel on pattern evolution. We take $N = 256$ and $M = 30$. We set 
    time step size $\Delta t = 0.1$. The initial value is  drawn from a uniform distribution on $[-0.4, 0.4]$. 
\end{example}

To better visualize the coarsening dynamics, we simulate $\phi$ using a relatively large thickness parameter $\varepsilon=0.05$. Its snapshots with the different values of $\alpha$ at $t = 0,\,10,\,20,\,50,\,200$
are shown in Figure \ref{Coarsening_Dynamics_for_AC}. We can see that the coarsening is initially slower for small value of $\alpha$.

 \begin{figure}[htbp]
\centering
\setlength{\tabcolsep}{1pt}     

\begin{adjustbox}{width=\textwidth}
\begin{tabular}{ccccc}

\includegraphics[width=0.198\textwidth]{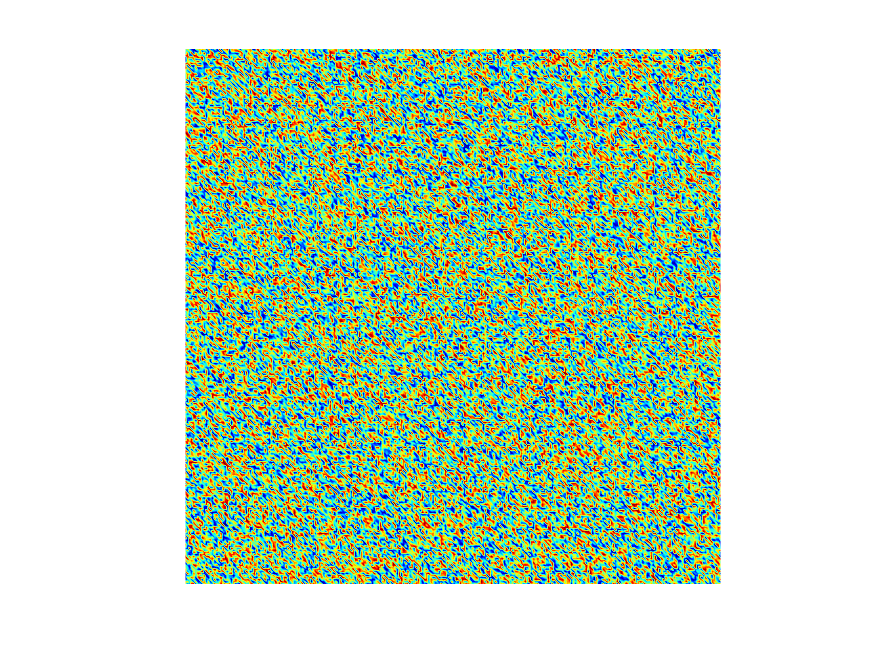} &
\includegraphics[width=0.198\textwidth]{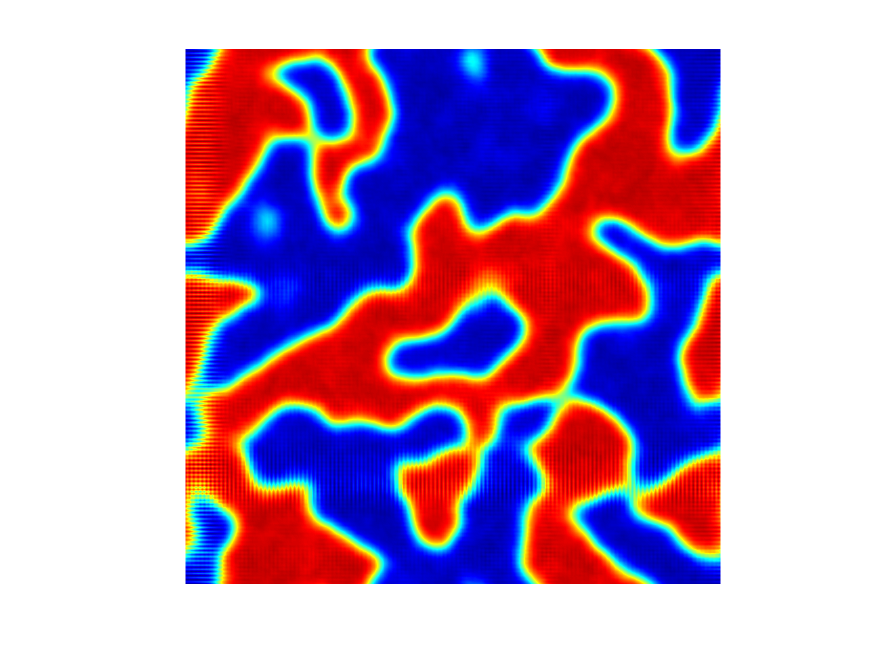} &
\includegraphics[width=0.198\textwidth]{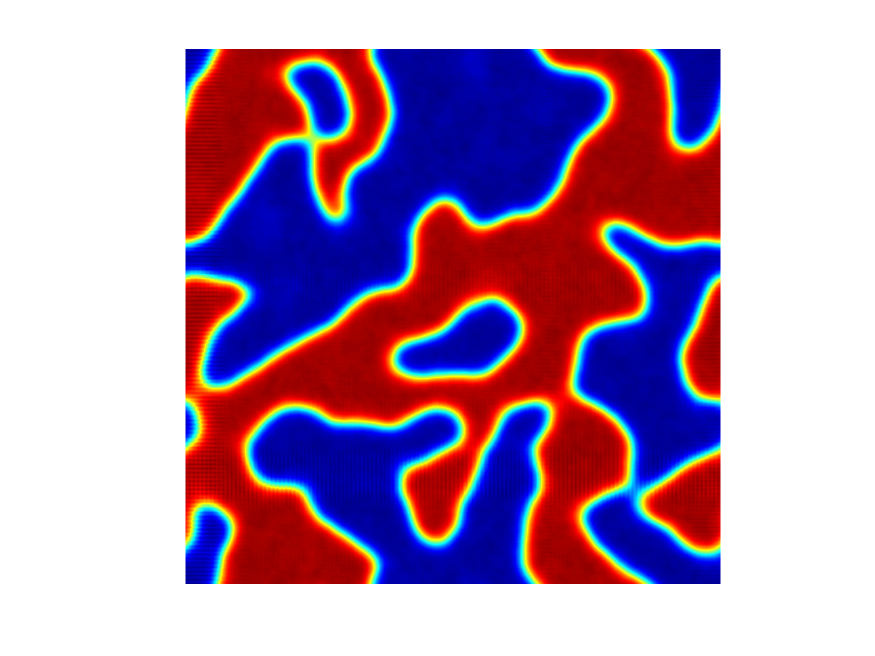} &
\includegraphics[width=0.198\textwidth]{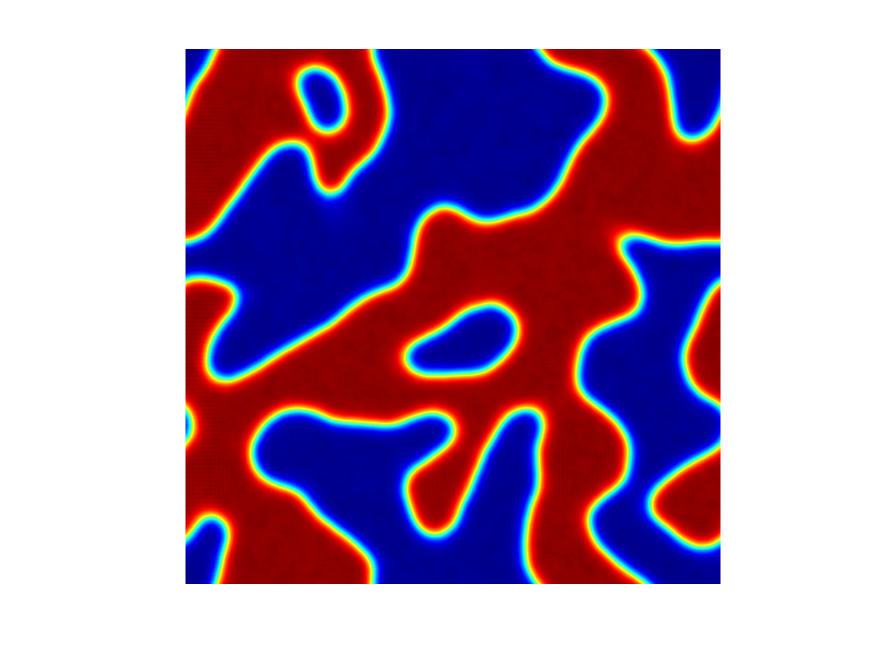} &
\includegraphics[width=0.198\textwidth]{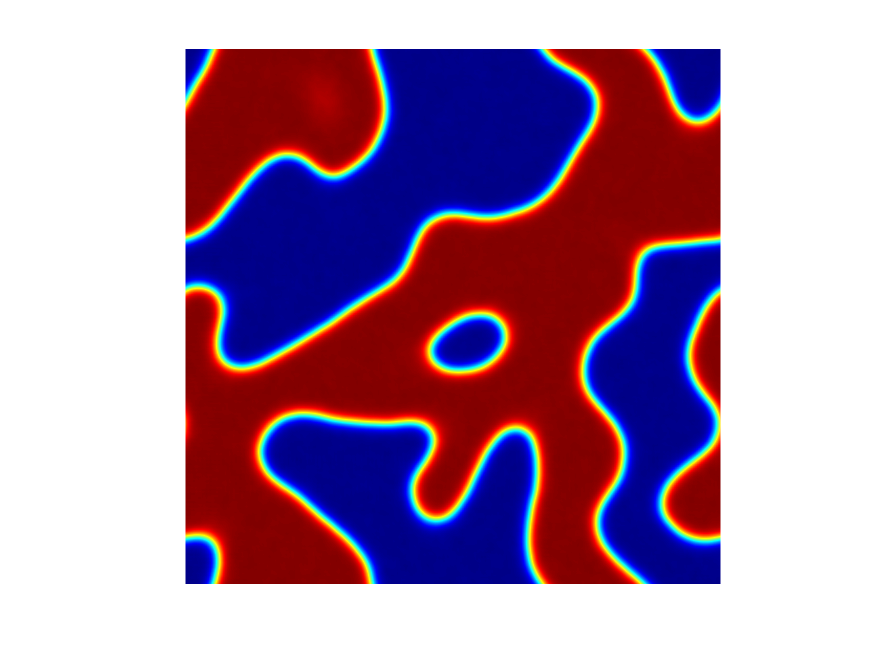} \\

\includegraphics[width=0.198\textwidth]{pic/CD/alpha9999/0_9999.eps} &
\includegraphics[width=0.198\textwidth]{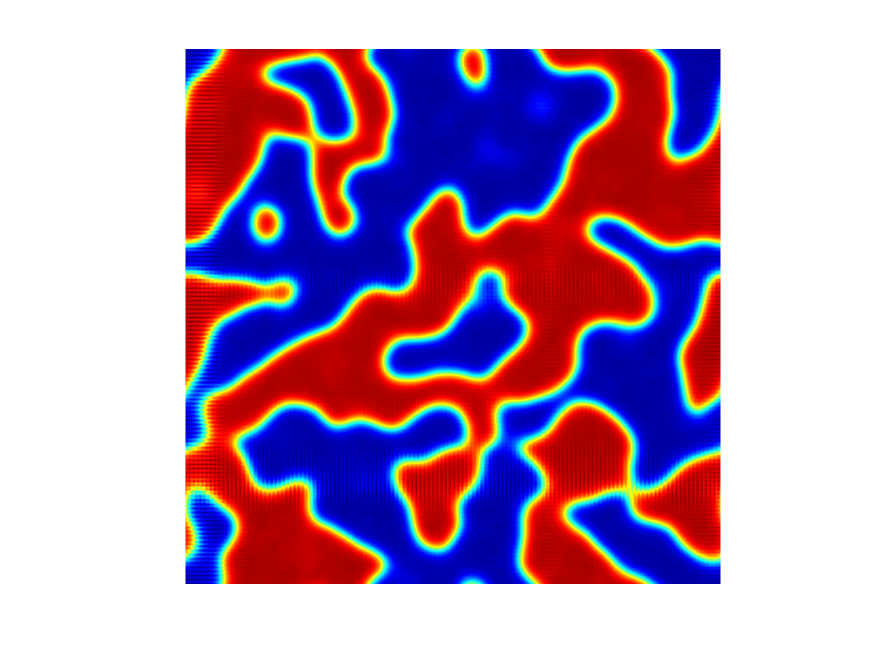} &
\includegraphics[width=0.198\textwidth]{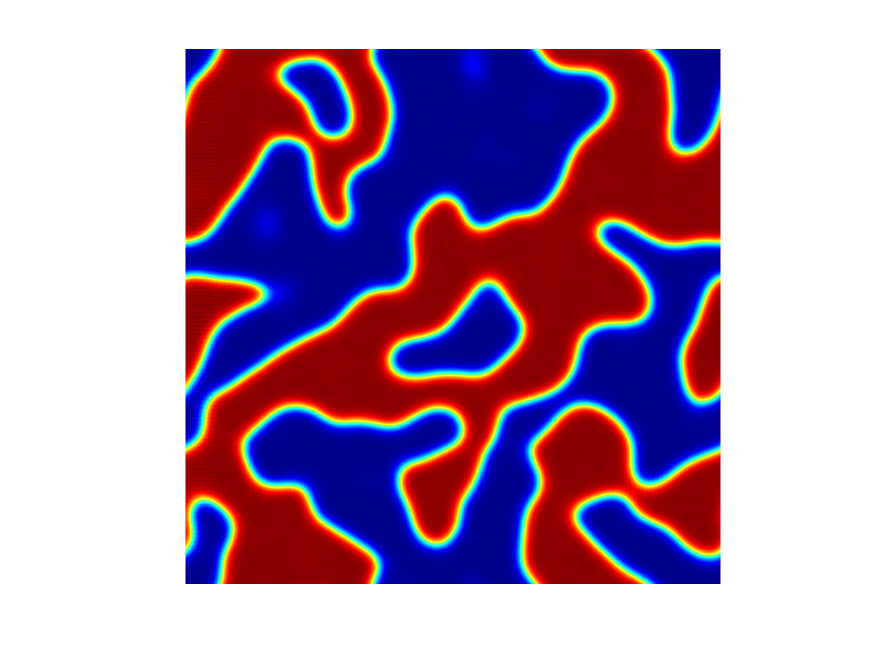} &
\includegraphics[width=0.198\textwidth]{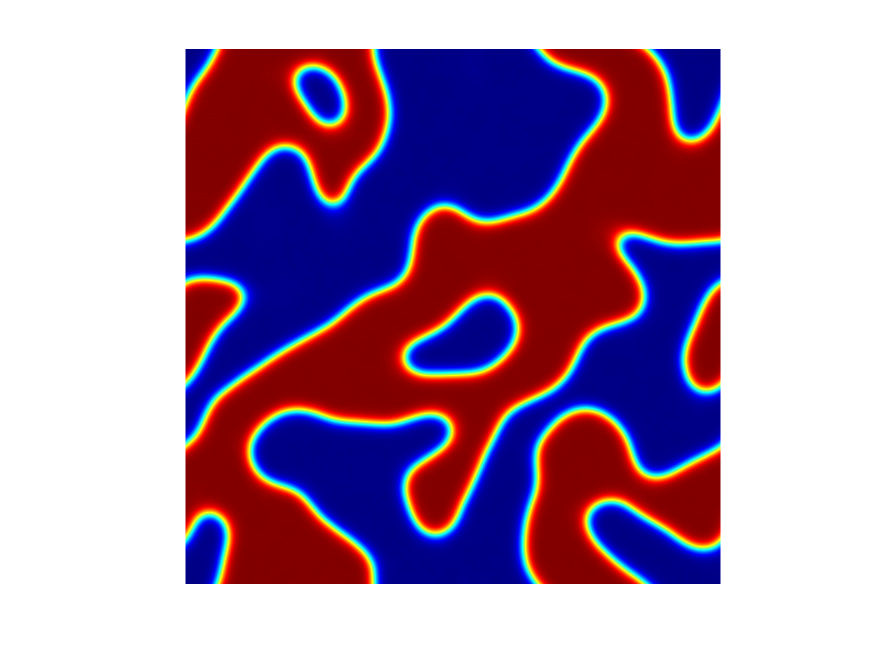} &
\includegraphics[width=0.198\textwidth]{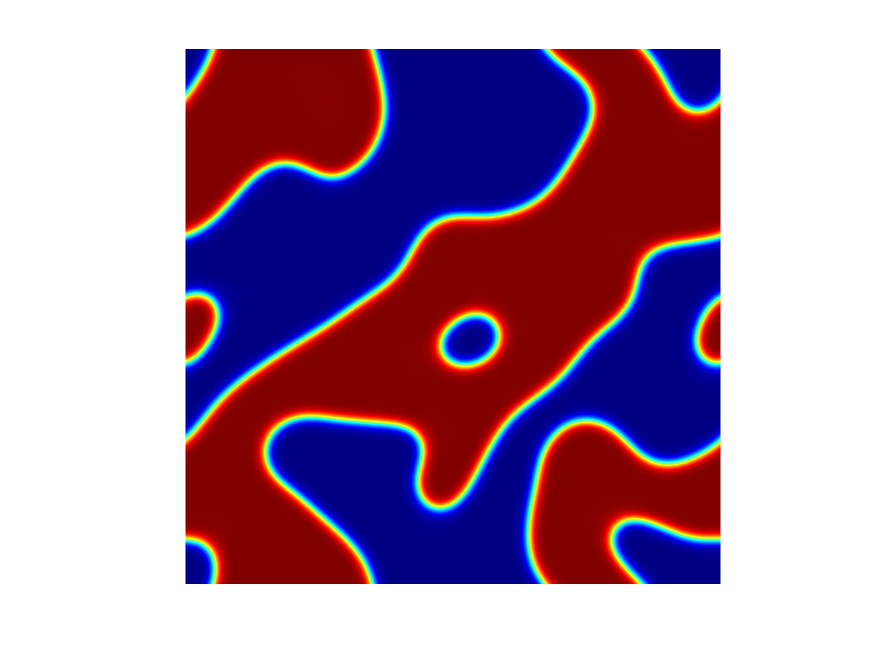} \\

\includegraphics[width=0.198\textwidth]{pic/CD/alpha9999/0_9999.eps} &
\includegraphics[width=0.198\textwidth]{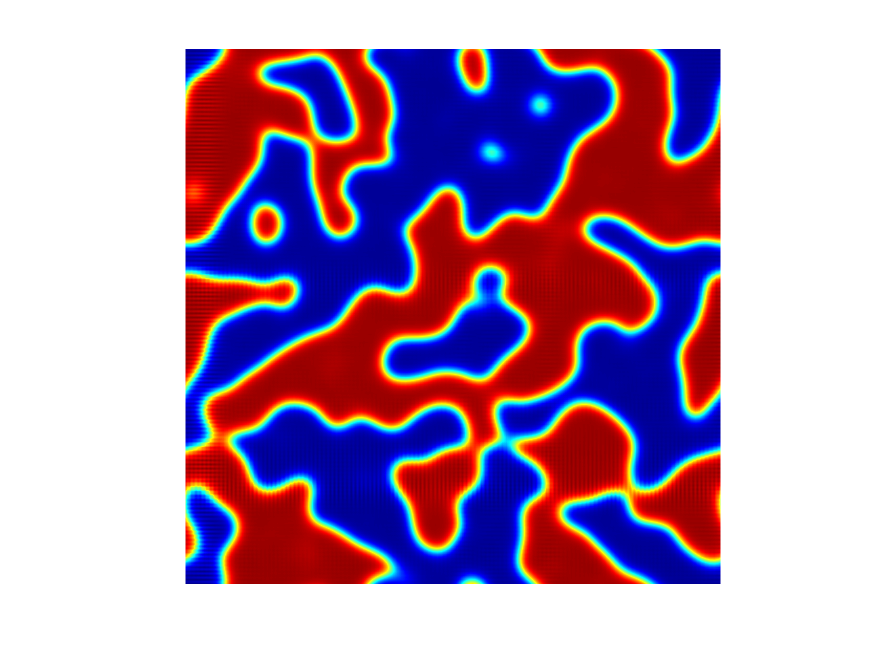} &
\includegraphics[width=0.198\textwidth]{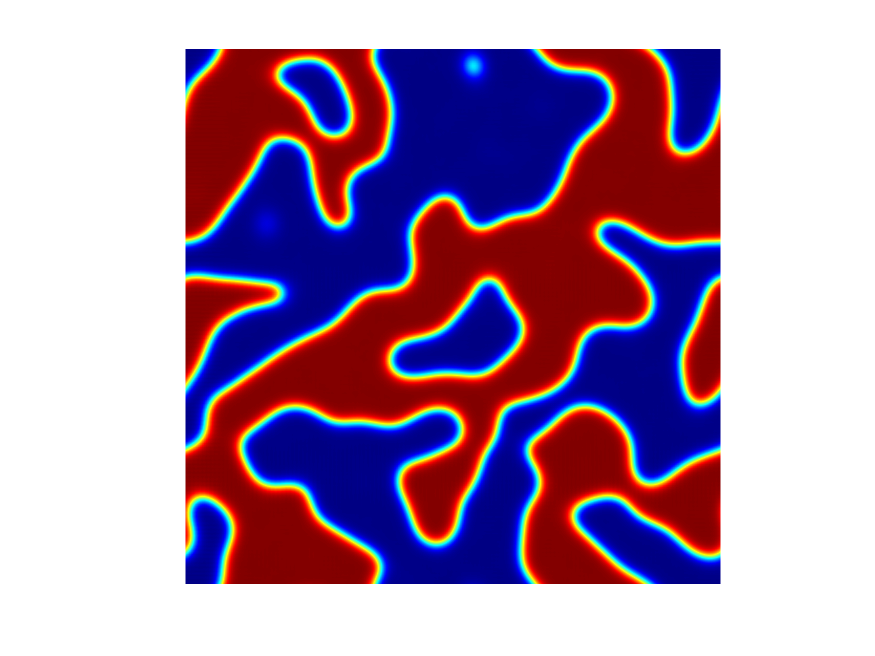} &
\includegraphics[width=0.198\textwidth]{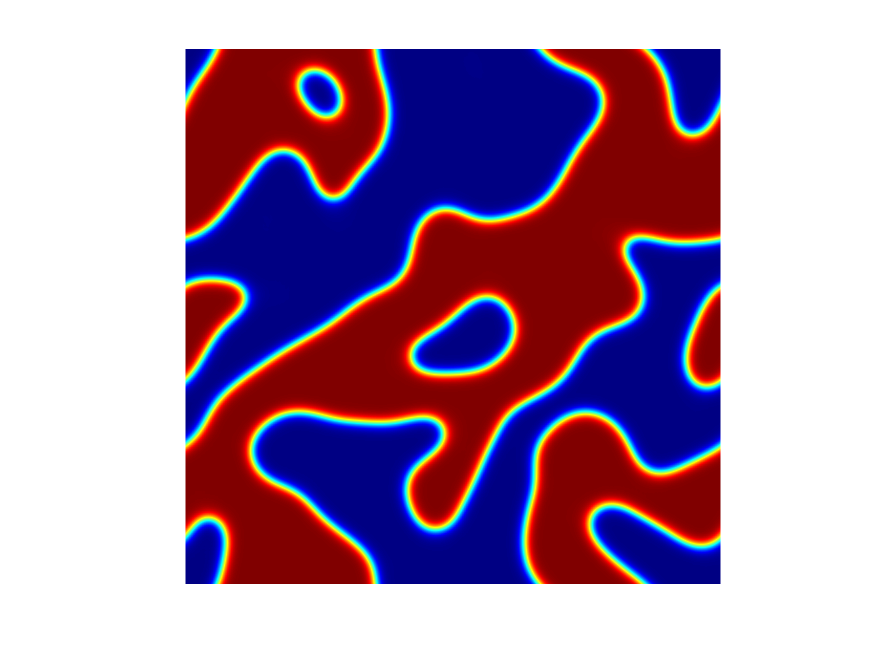} &
\includegraphics[width=0.198\textwidth]{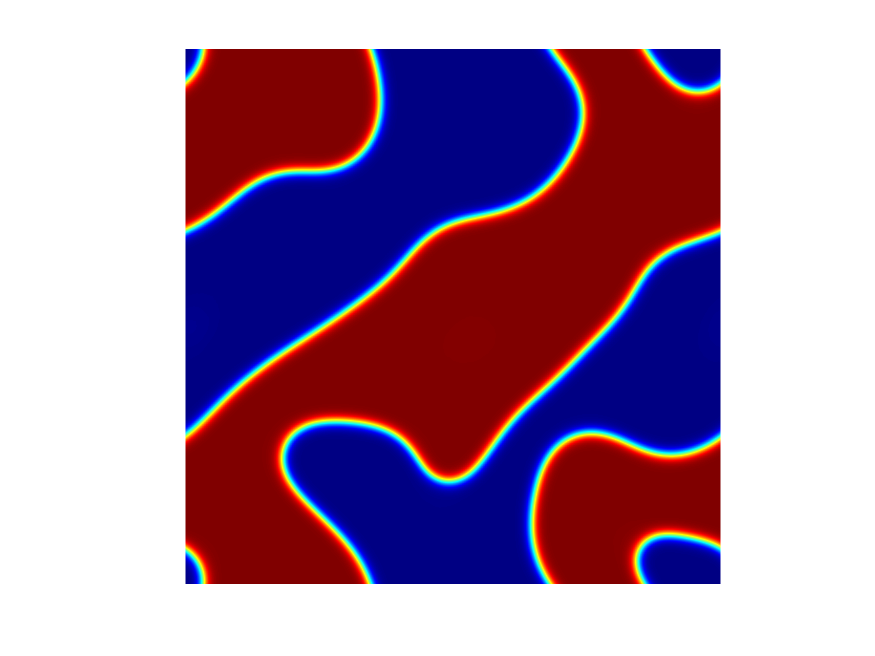} \\

\includegraphics[width=0.198\textwidth]{pic/CD/alpha9999/0_9999.eps} &
\includegraphics[width=0.198\textwidth]{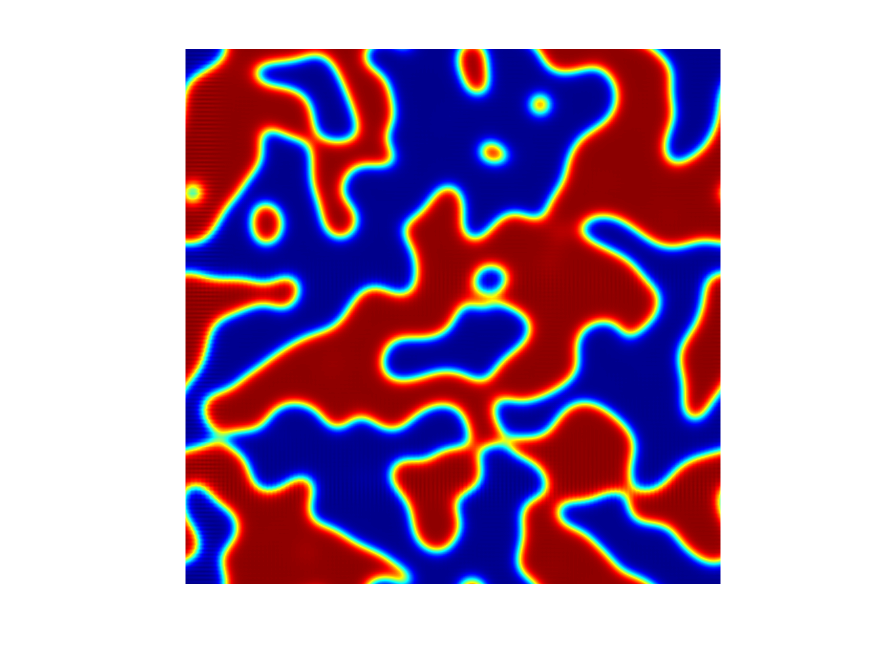} &
\includegraphics[width=0.198\textwidth]{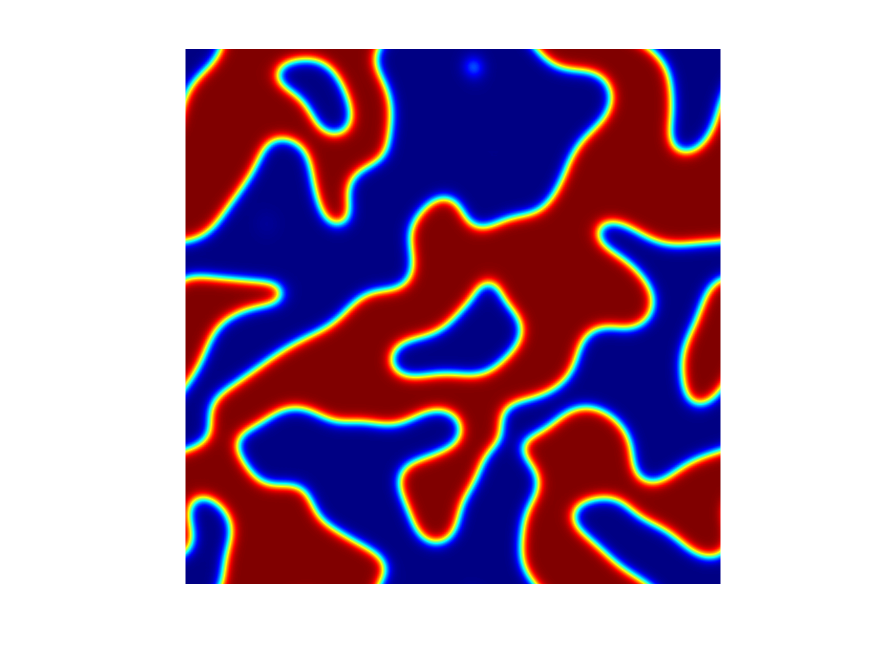} &
\includegraphics[width=0.198\textwidth]{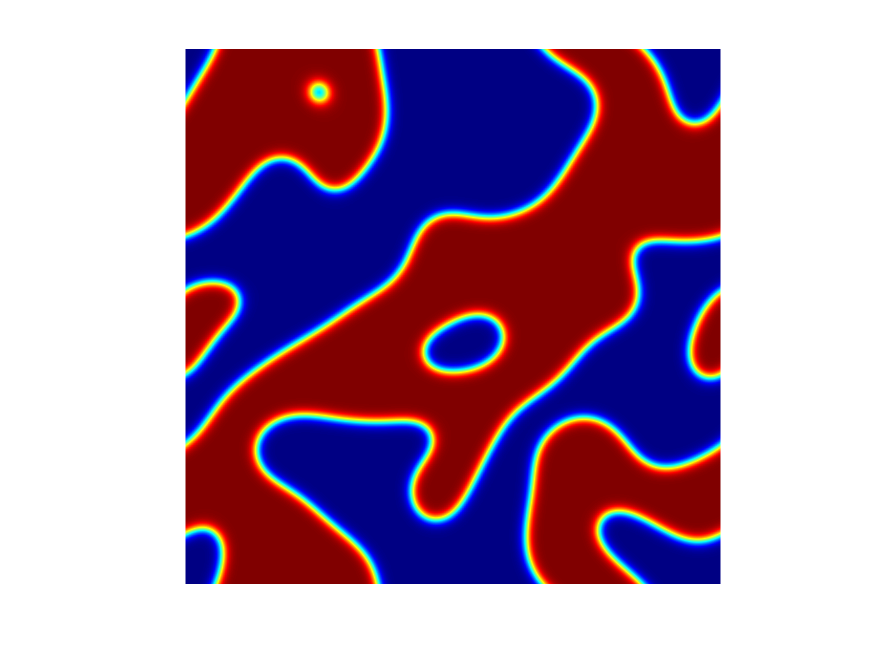} &
\includegraphics[width=0.198\textwidth]{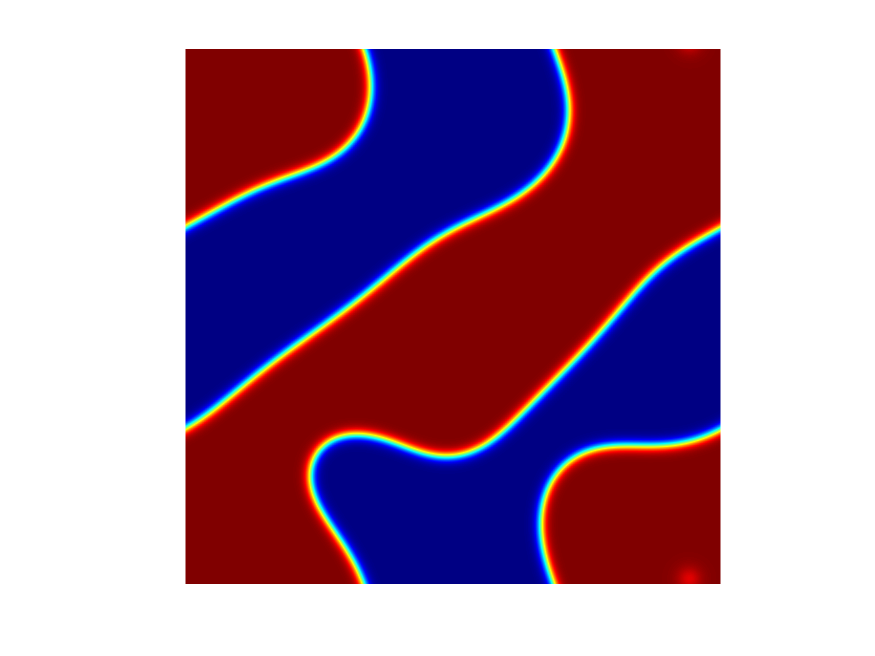} \\

\includegraphics[width=0.198\textwidth]{pic/CD/alpha9999/0_9999.eps} &
\includegraphics[width=0.198\textwidth]{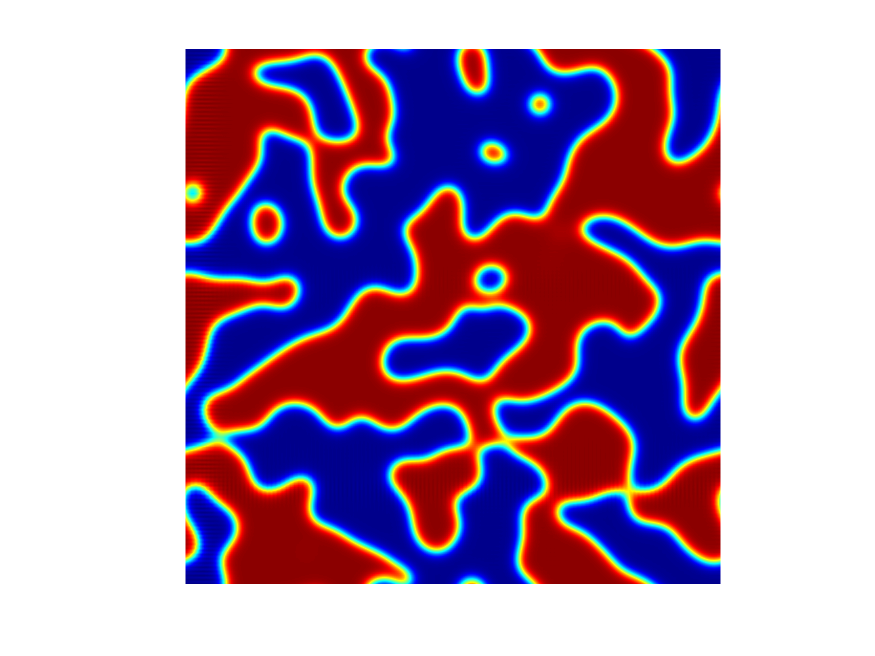} &
\includegraphics[width=0.198\textwidth]{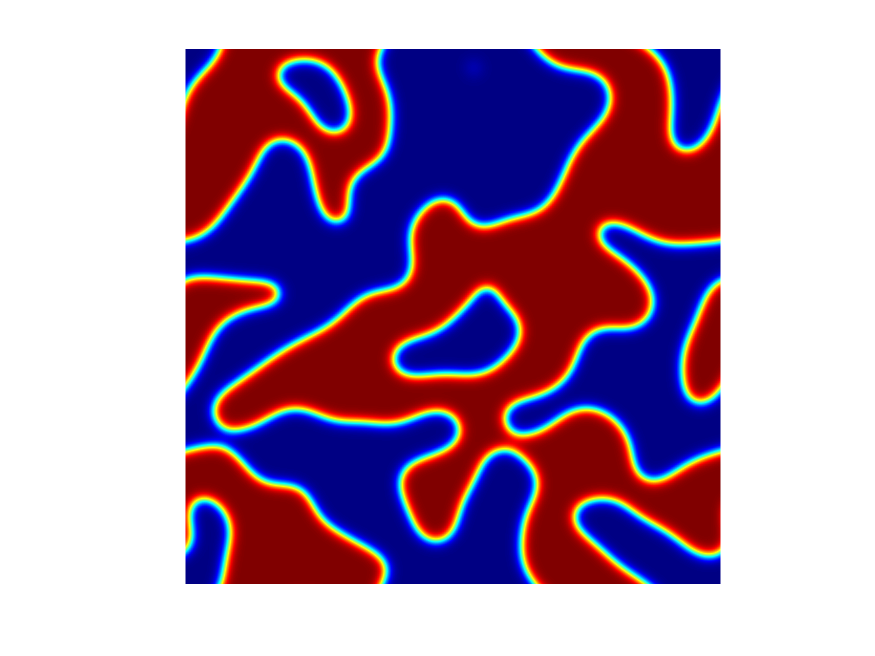} &
\includegraphics[width=0.198\textwidth]{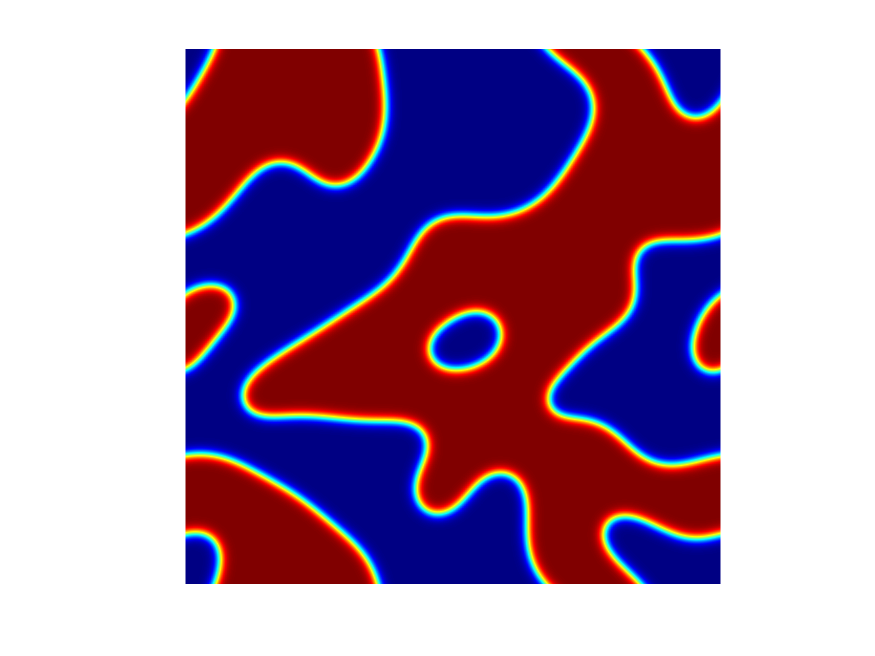} &
\includegraphics[width=0.198\textwidth]{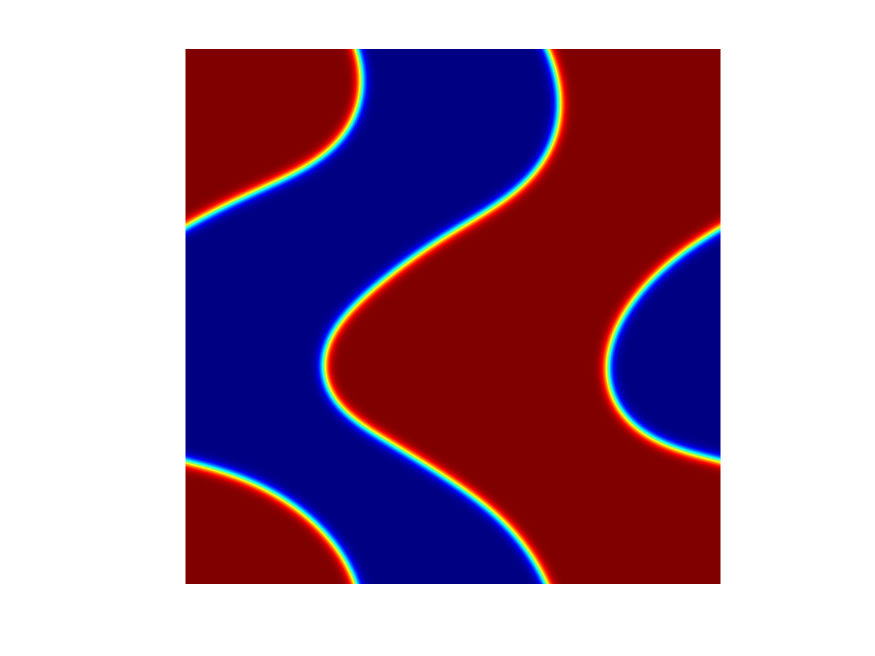}

\end{tabular}
\end{adjustbox}

\caption{Example \ref{example5}. 
Evolution of the solution with different values of fractional orders $\alpha$.
From top to bottom: $\alpha=0.3,0.5,0.7,0.9,0.9999$.
From left to right: $t=0,10,20,50,200$.
}

\label{Coarsening_Dynamics_for_AC}

\end{figure}

    To further quantify the coarsening dynamics, we track two diagnostic quantities: the domain number $N_d(t)$ and
the interface length $P_E(t)$.   The domain number $N_d(t)$ is defined as the
number of connected components of the set $\{\phi > 0\}$. The interface length
is computed via the Ginzburg--Landau functional approximation with
$\varepsilon=0.01$,
\begin{equation*}\label{eq:PE}
  P_E(t) = \int_\Omega \left( \frac{\varepsilon}{2} |\nabla\phi|^2
  + \frac{1}{\varepsilon} F(\phi) \right) d\mathbf{x}.
\end{equation*}

The evolution of $N_d(t)$ and $P_E(t)$ for different values of $\alpha$ is
presented in Figure~\ref{Domain_and_Interface_energy}. In classical Allen--Cahn dynamics ($\alpha \to 1$), the
coarsening is driven entirely by mean curvature flow: small domains shrink and
annihilate rapidly, and both $N_d$ and $P_E$ decrease quickly. As the value of 
$\alpha$ decreases, the coarsening is progressively retarded, and both
quantities decay significantly more slowly.

\begin{figure}[htbp]
	\centering
    \subfigure[The domain number] 
	{
		\begin{minipage}{0.35\linewidth}
			\centering
			\includegraphics[width=\linewidth]{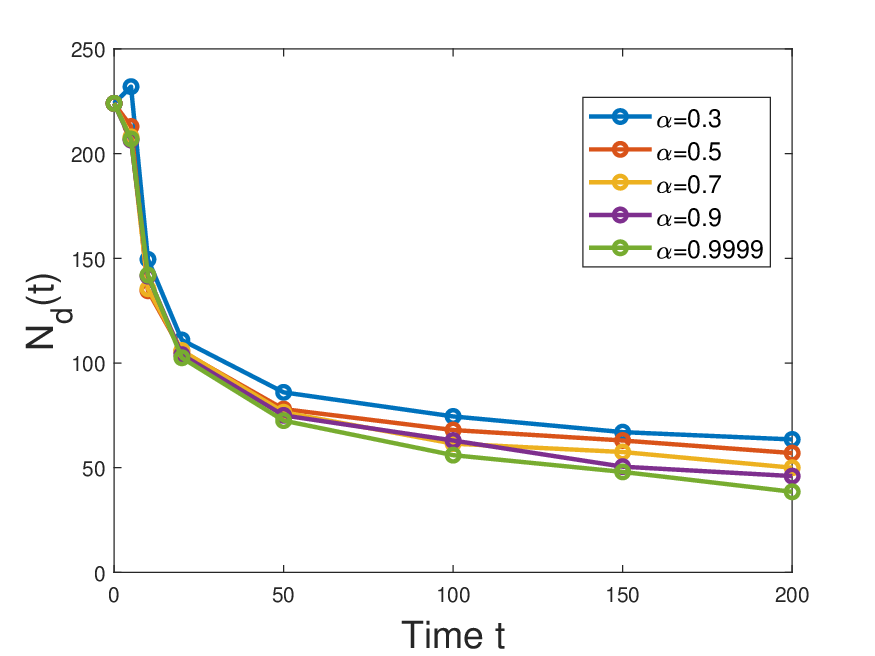}
		\end{minipage}
	}
    \subfigure[The interface length] 
	{
		\begin{minipage}{0.35\linewidth}
			\centering
			\includegraphics[width=\linewidth]{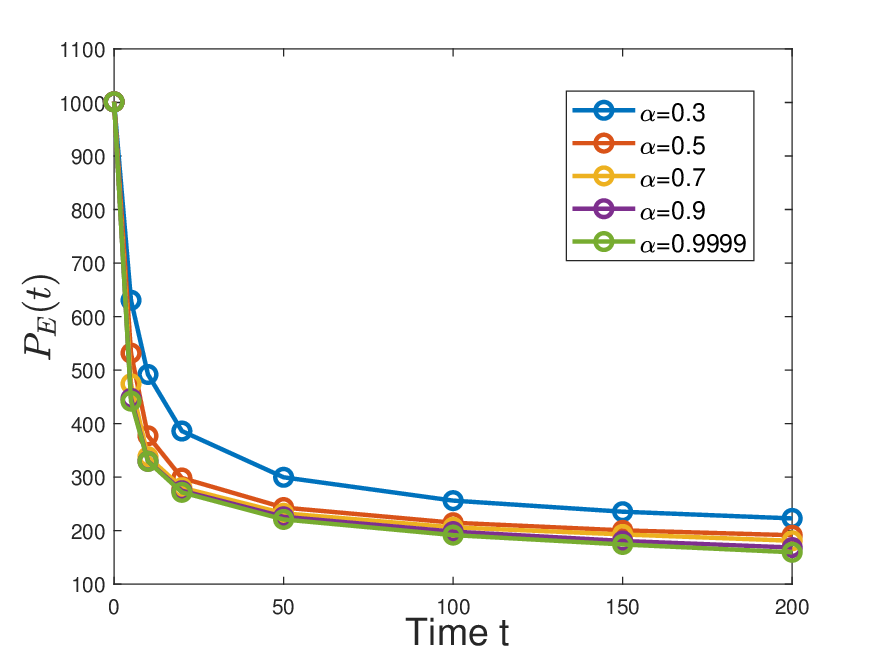}
		\end{minipage}
	}
	
 \caption{Example \ref{example5}. 
Evolution of the domain number $N_d(t)$ (left) and the interface length
$P_E(t)$ (right) for different fractional orders $\alpha$ with $\varepsilon=0.01$.
}
 \label{Domain_and_Interface_energy}
\end{figure}


This behavior is a direct consequence of the friction mechanism identified in 
 \eqref{eq:internal_balance}--\eqref{eq:macro_balance1} (see Remark~\ref{rmk:force-balance}). As $\alpha$ decreases, the relaxation spectrum
$\kappa(\xi) \propto \xi^{\alpha-1}$ shifts weight toward slow modes whose
elastic deformation $\Psi$ persists over the coarsening time scale. By the
internal force balance \eqref{eq:internal_balance}, each unrelaxed mode exerts a friction
force $\kappa(\xi)\Psi$ on $\phi$ that directly opposes the curvature-driven
interface motion. The smaller $\alpha$ is, the greater the aggregate persistent
friction, and the slower the domain annihilation. As $\alpha \to 1$, only
instantaneous (Markovian) friction remains, and classical Allen--Cahn
coarsening is recovered.


\section{Conclusion}\label{sec:7}
This paper develops a thermodynamically consistent modeling and computational framework for phase-field dynamics with memory effects. The central idea is that time-fractional phase-field equations should be interpreted as reduced representations of more fundamental memory-dependent systems rather than as the primary modeling starting point. By incorporating a continuum of relaxation modes into Onsager's variational principle, we establish a class of memory-augmented phase-field (MAPF) equations that naturally account for non-Markovian effects while preserving the gradient-flow structure of classical phase-field models.

The proposed framework provides a direct connection between relaxation spectra, memory kernels, and internal-variable representations. For a power-law relaxation spectrum, the corresponding fractional phase-field model is recovered as a special reduced case. More importantly, the internal relaxation modes reveal the hidden memory variables, stored memory energy, and dissipation mechanisms underlying the fractional formulation. This perspective also indicates that other choices of relaxation spectra or internal constitutive laws can generate a broader class of memory-dependent phase-field models beyond the fractional setting.

To enable efficient computation, we develop an equivalent internal-variable lifting of the MAPF equations, which transforms the original nonlocal-in-time dynamics into an extended Markovian system. Based on this formulation, we establish the continuous energy dissipation law and mass conservation properties and design first- and second-order energy-stable exponential scalar auxiliary variable (ESAV) schemes. By exploiting the rank-one coupling structure induced by the relaxation spectrum representation, we further develop an efficient implementation with reduced computational complexity and storage requirements. Numerical experiments demonstrate that the proposed methods accurately capture long-time memory effects while preserving the essential physical structures of phase-field dynamics.

The present work establishes a general framework that connects memory-dependent phase-field modeling, internal-variable thermodynamics, and non-Markovian dissipative systems. The framework suggests that memory effects in phase-field dynamics can be systematically modeled through relaxation spectra rather than introduced solely through fractional operators, opening new possibilities for constructing physically interpretable models with diverse relaxation behaviors. Future research will investigate experimentally informed relaxation spectra, more general constitutive laws, and efficient computational techniques for complex memory-driven phase-field systems.

\section*{Acknowledgments}


This work was supported by the National Key R\&D Program of China (Grant No.: 2023YFA1011403), the NSFC (Grant No.: 12671515, 92470112 and 12426312), the Zhejiang Provincial Natural Science Foundation of China (Grant No.: LR26A010001), and the Shenzhen Science and Technology Program (Grant No.: QNXMA20250701095429040).

\bibliographystyle{elsarticle-num}
\bibliography{references_memory_modeling_polished}

@article{HouXu2021,
author = {Hou, Dianming and Xu, Chuanju},
title = {Highly Efficient and Energy Dissipative Schemes for the Time Fractional {Allen-Cahn} Equation},
journal = {SIAM J. Sci. Comput.},
volume = {43},
number = {5},
pages = {A3305-A3327},
year = {2021},
}

@article{QuanTangYang2020,
author = {Quan, Chaoyu and Tang, Tao and Yang, Jiang},
title = {How to define dissipation-preserving energy for time-fractional phase-field equations},
journal = {CSIAM Trans. Appl. Math.},
volume = {1},
number = {3},
pages = {478-490},
year = {2020},
}

@article{QuanTangYang2023,
author = {Quan, Chaoyu and Tang, Tao and Wang, Boyi and Yang, Jiang},
title = {A decreasing upper bound of the energy for time-fractional
 phase-field equations},
journal = {Commun. Comput. Phys.},
volume = {33},
pages = {962-991},
year = {2023},
}

@article{ShenYang2010,
title = {Numerical approximations of {Allen-Cahn} and {Cahn-Hilliard} equations},
journal = {Discrete Contin.Dyn. Syst.,Ser. A},
volume = {28},
pages = {1669-1691},
year = {2010},
author = {Shen, Jie and Yang, Jiang},
}

@article{SHEN2019474,
title = {A New Class of Efficient and Robust Energy Stable Schemes
 for Gradient Flows},
journal = {SIAM Rev.},
volume = {61},
pages = {474-506},
year = {2019},
author = {Shen, Jie and Xu, Jie and Yang, Jiang},
}

@article{LiaoTao2021,
author = {Liao, Honglin and Tang, Tao and Zhou, Tao},
title = {An Energy Stable and Maximum Bound Preserving Scheme with Variable Time Steps for Time Fractional {Allen-Cahn} Equation},
journal = {SIAM J. Sci. Comput.},
volume = {43},
number = {5},
pages = {A3503-A3526},
year = {2021},
}

@article{Fritz2021,
title = {Equivalence between a time-fractional and an integer-order gradient flow: {The} memory effect reflected in the energy},
author = {Fritz, Marvin  and  Khristenko, Ustim  and  Wohlmuth, Barbara},
pages = {20220262},
volume = {12},
journal = {Adv. Nonlinear Anal.},
year = {2023},
}

@article{DingMao2024,
  title={A novel class of arbitrary high-order numerical schemes for  fractional differential equations},
  author={Ding,Peng and Mao, Zhiping},
journal  = {J. Sci. Comput.},
volume = {105},
pages = {82},
year = {2025},
}

@book{QiangDu,
author = {Du, Qiang},
title = {Nonlocal Modeling, Analysis, and Computation},
publisher = {Society for Industrial and Applied Mathematics},
year = {2019},
address = {Philadelphia, PA},
}

@article{Tang_PF,
author = {Tang, Tao and Yu, Haijun and Zhou, Tao},
title = {On Energy Dissipation Theory and Numerical Stability for Time-Fractional Phase-Field Equations},
journal = {SIAM J. Sci. Comput.},
volume = {41},
number = {6},
pages = {A3757-A3778},
year = {2019},
}

@article{sESAV_2022,
    author = {Ju, Lili  and Li, Xiao and  Qiao, Zhonghua},
    title = {Stabilized Exponential-{SAV} Schemes Preserving Energy Dissipation Law and Maximum Bound Principle for The {Allen–Cahn} Type Equations},
    journal = {J. Sci. Comput},
    volume = {92},
    pages = {66},
    year = {2022},
}

@article{DuYangZhou2020JSC,
  author  = {Qiang Du and Jiang Yang and Zhi Zhou},
  title   = {Time-Fractional Allen--Cahn Equations: Analysis and Numerical Methods},
  journal = {J. Sci. Comput},
  year    = {2020},
  volume  = {85},
  number  = {2},
  pages   = {1--30},
  note    = {Article 42},
  doi     = {10.1007/s10915-020-01351-5}
}

@article{WinterChambon1986,
  author  = {Winter, H. Henning and Chambon, Francois},
  title   = {Analysis of linear viscoelasticity of a crosslinking polymer at the gel point},
  journal = {Journal of Rheology},
  volume  = {30},
  number  = {2},
  pages   = {367--382},
  year    = {1986},
  doi     = {10.1122/1.549853}
}

@article{Fabry2003,
  author  = {Fabry, Ben and Maksym, Geoffrey N. and Butler, James P. and Glogauer, Michael and Navajas, Daniel and Fredberg, Jeffrey J.},
  title   = {Time scale and other invariants of integrative mechanical behavior in living cells},
  journal = {Physical Review E},
  volume  = {68},
  number  = {4},
  pages   = {041914},
  year    = {2003},
  doi     = {10.1103/PhysRevE.68.041914}
}

@article{Qian2026Incorporating,
  author  = {Qian, Tiezheng and Yu, Wenkai and Zhang, Zhen and Zhang, Rui},
  title   = { Incorporating convected time derivatives into Onsager's variational principle: viscoelastic fluids and nematic liquid crystals},
  journal = {Chinese Physics B},
  year    = {2026},
  volume  = {},
  pages   = {},
}

@article{AlMaskariKaraa2021IMAJNA,
  author  = {M. Al-Maskari and S. Karaa},
  title   = {The time-fractional Cahn--Hilliard equation: analysis and approximation},
  journal = {IMA Journal of Numerical Analysis},
  year    = {2021},
  volume  = {42},
  number  = {2},
  pages   = {1831--1865},
  doi     = {10.1093/imanum/drab025}
}

@book{Ottinger2005BET,
  author    = {Hans Christian {\"O}ttinger},
  title     = {Beyond Equilibrium Thermodynamics},
  publisher = {Wiley-Interscience},
  year      = {2005},
}

@article{diethelm2022new,
  title={A New Diffusive Representation for Fractional Derivatives and its Application},
  author={Diethelm, Kai},
  journal={arXiv preprint arXiv:2204.04023},
  year={2022}
}

@Article{Bonfanti2020,
author ={Bonfanti, A. and Kaplan, J. L. and Charras, G. and Kabla, A.},
title  ={Fractional viscoelastic models for power-law materials},
journal  ={Soft Matter},
year  ={2020},
volume  ={16},
number  ={26},
pages  ={6002-6020},
}

@article{DingMaoLiu,
title = {Stability and Error Estimates of {BDF-k} Numerical Schemes for
Sub-diffusion Equation with Tempered Fractional Derivative},
journal = {J. Sci. Comput.},
volume = {107},
pages = {85},
year = {2026},
author = {Peng Ding and Qingxia Liu and Zhiping Mao},
}

@article{Montseny1998,
title = {Diffusive representation of pseudo-differential time-operators},
journal = {ESAIM: Proc.},
volume = {5},
pages = {159-175},
year = {1998},
author = {
Gerard Montseny },
}

@article{Gurtin1996Microforce,
  author = {Gurtin, Morton E.},
  title = {Generalized {Ginzburg--Landau} and {Cahn--Hilliard} Equations Based on a Microforce Balance},
  journal = {Physica D: Nonlinear Phenomena},
  volume = {92},
  number = {3--4},
  pages = {178--192},
  year = {1996},
  doi = {10.1016/0167-2789(95)00173-5}
}

@article{ColemanGurtin1967InternalVariables,
  author = {Coleman, Bernard D. and Gurtin, Morton E.},
  title = {Thermodynamics with Internal State Variables},
  journal = {The Journal of Chemical Physics},
  volume = {47},
  number = {2},
  pages = {597--613},
  year = {1967},
  doi = {10.1063/1.1711937}
}

@article{RotsteinBrandonNovickCohenNepomnyashchy2001Memory,
  author = {Rotstein, Horacio G. and Brandon, Simon and Novick-Cohen, Amy and Nepomnyashchy, Alexander},
  title = {Phase Field Equations with Memory: The Hyperbolic Case},
  journal = {SIAM Journal on Applied Mathematics},
  volume = {62},
  number = {1},
  pages = {264--282},
  year = {2001},
  doi = {10.1137/S0036139900369102}
}

@article{Tanaka2000VPS,
  author = {Tanaka, Hajime},
  title = {Viscoelastic Phase Separation},
  journal = {Journal of Physics: Condensed Matter},
  volume = {12},
  number = {15},
  pages = {R207--R264},
  year = {2000},
  doi = {10.1088/0953-8984/12/15/201}
}

@article{Tanaka2022BioCells,
  author = {Tanaka, Hajime},
  title = {Viscoelastic Phase Separation in Biological Cells},
  journal = {Communications Physics},
  volume = {5},
  pages = {167},
  year = {2022},
  doi = {10.1038/s42005-022-00947-7}
}

@article{CurkLuijten2023PNAS,
  author = {Curk, Tine and Luijten, Erik},
  title = {Phase Separation and Ripening in a Viscoelastic Gel},
  journal = {Proceedings of the National Academy of Sciences of the United States of America},
  volume = {120},
  number = {32},
  pages = {e2304655120},
  year = {2023},
  doi = {10.1073/pnas.2304655120}
}

@article{LiuChengWangZhao2018TFPF,
  author = {Liu, Huan and Cheng, Aijie and Wang, Hong and Zhao, Jia},
  title = {Time-Fractional {Allen--Cahn} and {Cahn--Hilliard} Phase-Field Models and Their Numerical Investigation},
  journal = {Computers \& Mathematics with Applications},
  volume = {76},
  number = {8},
  pages = {1876--1892},
  year = {2018},
  doi = {10.1016/j.camwa.2018.07.036}
}

@article{LiQuanXu2022Energy,
  author = {Li, Dong and Quan, Chaoyu and Xu, Jiao},
  title = {Energy-Dissipation for Time-Fractional Phase-Field Equations},
  journal = {Communications on Pure and Applied Analysis},
  volume = {21},
  number = {10},
  pages = {3371--3387},
  year = {2022},
  doi = {10.3934/cpaa.2022104}
}

@book{Widder1941,
  author    = {Widder, David Vernon},
  title     = {The {L}aplace Transform},
  publisher = {Princeton University Press},
  address   = {Princeton, NJ},
  year      = {1941},
}

@book{Schilling2012,
  author    = {Schilling, Ren{\'e} L. and Song, Renming and Vondra{\v{c}}ek, Zoran},
  title     = {Bernstein Functions: Theory and Applications},
  edition   = {2nd},
  series    = {De Gruyter Studies in Mathematics},
  volume    = {37},
  publisher = {De Gruyter},
  address   = {Berlin},
  year      = {2012},
}

@article{DoiOnuki1992,
  author  = {M. Doi and A. Onuki},
  title   = {Dynamic coupling between stress and composition in polymer solutions and
     blends},
  journal = {J. Phys. II France.},
  volume  = {2},
  number = {8},
  pages   = {1631--1656},
  year    = {1992},
}

@article{Tanaka1997,
  author  = {Tanaka, Hajime},
  title   = {Viscoelastic phase separation in soft matter and foods},
  journal = {Faraday Discussions.},
  volume  = {156},
  pages   = {371--406},
  year    = {2012},
}

@article{deGennes1976,
  author  = {P. G. de Gennes},
  title   = {On a relation between percolation theory and the elasticity of gels},
  journal = {J. Physique Lett.},
  volume  = {37},
   number  = {1},
  pages   = {L1-L2},
  year    = {1976},
}

@article{Muthukumar1989,
  author  = {M. Muthukumar},
  title   = {Screening effect on viscoelasticity near the gel point},
  journal = {Macromolecules.},
  volume  = {22},
  number  = {12},
  pages   = {4656--4658},
  year    = {1989},
}

@article{AllenCahn1979,
  author = {Allen, Samuel M. and Cahn, John W.},
  title = {A Microscopic Theory for Antiphase Boundary Motion and Its Application to Antiphase Domain Coarsening},
  journal = {Acta. Metall.},
  volume = {27},
  number = {6},
  pages = {1085-1095},
  year = {1979},
}

@article{CahnHilliard1958,
  author = {Cahn, John W. and Hilliard, John E.},
  title = {Free Energy of a Nonuniform System. {I}. {Interfacial} Free Energy},
  journal = {J. Chem. Phys.},
  volume = {28},
  number = {2},
  pages = {258-267},
  year = {1958},
}

@article{LiBianLiKarniadakis2015MZ,
  author = {Li, Zhen and Bian, Xin and Li, Xiantao and Karniadakis, George Em},
  title = {Incorporation of Memory Effects in Coarse-Grained Modeling via the {Mori--Zwanzig} Formalism},
  journal = {J. Chem. Phys.},
  volume = {143},
  number = {24},
  pages = {243128},
  year = {2015},
}

@article{sherman1950adjustment,
  title={Adjustment of an inverse matrix corresponding to a change in one element of a given matrix},
  author={Sherman, Jack and Morrison, Winifred J.},
  journal={Ann. Math. Statist.},
  volume={21},
  number={1},
  pages={124--127},
  year={1950},
}

@book{Abramowitz1965,
  title={Handbook of Mathematical Functions with Formulas, Graphs, and Mathematical Tables},
  author={Abramowitz, Milton and Stegun, Irene A.},
  year={1965},
  publisher={Dover Publications}
}

@article{sabzikar2015tempered,
  title={Tempered fractional calculus},
  author={Sabzikar, Farzad and Meerschaert, Mark M. and Chen, Jinghua},
  journal={J. Comput. Phys.},
  volume={293},
  pages={14--28},
  year={2015},
}

@article{song2023nonmaxwellian,
  title={Non-Maxwellian viscoelastic stress relaxations in soft matter},
  author={Song, Jake and Holten-Andersen, Niels and McKinley, Gareth H.},
  journal={Soft Matter.},
  volume={19},
  number={41},
  pages={7885--7906},
  year={2023},
}

@article{mainardi2008time,
  title={Time-fractional diffusion of distributed order},
  author={Mainardi, Francesco and Mura, Antonio and Pagnini, Gianni and Gorenflo, Rudolf},
  journal={J. Vib. Control.},
  volume={14},
  number={9--10},
  pages={1267--1290},
  year={2008},
}

@article{luo2018development,
  title={Development of Prony series models based on continuous relaxation spectrums for relaxation moduli determined using creep tests},
  author={Luo, Rong and Lv, Huijie and Liu, Hanqi},
  journal={Constr. Build. Mater.},
  volume={168},
  pages={758--770},
  year={2018},
}

@book{Ferry1980,
  author    = {Ferry, John D.},
  title     = {Viscoelastic Properties of Polymers},
  edition   = {3rd},
  publisher = {Wiley},
  address   = {New York},
  year      = {1980}
}

@book{Tschoegl1989,
  author    = {Tschoegl, Nicholas W.},
  title     = {The Phenomenological Theory of Linear Viscoelastic Behavior: An Introduction},
  publisher = {Springer},
  address   = {Berlin},
  year      = {1989}
}

@book{Mainardi2010Fractional,
  author    = {Mainardi, Francesco},
  title     = {Fractional Calculus and Waves in Linear Viscoelasticity},
  publisher = {Imperial College Press},
  address   = {London},
  year      = {2010}
}

@article{Bagley1983Fractional,
  author  = {Bagley, Ronald L. and Torvik, Paul J.},
  title   = {A Theoretical Basis for the Application of Fractional Calculus to Viscoelasticity},
  journal = {J. Rheol.},
  volume  = {27},
  number  = {3},
  pages   = {201--210},
  year    = {1983}
}

@book{Kilbas2006Theory,
  author    = {Kilbas, Anatoly A. and Srivastava, Hari M. and Trujillo, Juan J.},
  title     = {Theory and Applications of Fractional Differential Equations},
  publisher = {Elsevier},
  year      = {2006}
}
\end{document}